\pgfplotsset{compat=1.15}
\newtheorem{theorem}{Th\'eor\`eme}[section]
\newtheorem{thm}[theorem]{Theorem}
\newtheorem{lemma}[theorem]{Lemma}
\newtheorem{coro}[theorem]{Corollary}
\newtheorem{remark}{Remark}[section]
\DeclareMathOperator\supp{supp}
\def\R{\mathbb R}
\def\Z{\mathbb Z}
\numberwithin{equation}{section}
\DeclareMathOperator{\dive}{div}
\tikzset{every picture/.style={execute at begin picture={
   \shorthandoff{:;!?};}
}}
\begin{document}
\title[Primitive Equations for the Ocean and Atmosphere]{The primitive equations for the ocean and atmosphere in anisotropic spaces}

\author[Valentin Lemarié]{Valentin Lemarié}

{\begin{center}
\begin{abstract}
In this work, we study the well-posedness of the primitive equations for the ocean and the atmosphere on two specific domains: a bounded domain $\Omega_1\mathrel{\mathop:}=(-1,1)^3$ with periodic boundary conditions, and the strip $\Omega_2\mathrel{\mathop:}=\mathbb{R}^2\times(-1,1)$ with periodic boundary conditions in the vertical direction. In a first time, we establish a global existence and uniqueness theorem for small initial data in a suitable anisotropic Besov space.

Then, we also justify, in a similar functional framework, the singular limit from the anisotropic Navier-Stokes equations to this system.
\end{abstract}
\end{center}}

\maketitle
\section{Introduction}

In an effort to predict meteorological phenomena, in 1922, the mathematician and meteorologist Richardson proposed and used, in \cite{Richardson}, a simplified version of the Navier-Stokes equations: the atmospheric primitive equations. These equations turn out to be a good model for studying large-scale flows where the vertical component of the motion is much smaller than the horizontal one. This is particularly the case for the troposphere, since the vertical scale (10–20 km) is much smaller than the horizontal scales (several thousand kilometers). For more information on the physical aspects of these equations, we refer to \cite{Haltiner}, \cite{Pedlosky}, \cite{Smagorinsky}, \cite{Washington}, and \cite{Zeng}.

Subsequently, they were applied to oceanographic models by Bryan \cite{Bryan}, observing that the ocean layer on Earth is very thin compared to the planet’s dimensions. For the modeling and physical motivation of the ocean primitive equations, we refer to \cite{Lewandowski}, \cite{Majda}, and \cite{Vallis}.

In this paper, we will mathematically study, on
\[
\begin{array}{lll}
\Omega_1 := \mathcal{T}^3 & &\text{the three-dimensional torus}, \\
& \text{or} & \\
\Omega_2 := \mathbb{R}^2 \times \mathcal{T}_v & &\text{with } \mathcal{T}_v \text{ the vertical torus},
\end{array}
\]
a particular case of the primitive equations for the ocean and atmosphere
\begin{eqnarray} \label{Equations primitives}
\left\{
\begin{array}{l}
\partial_t u^h + u\cdot \nabla u^h - \nu_h \Delta_h u^h + \nabla_h p = 0, \\
\partial_z p = 0, \\
\operatorname{div}_h u^h + \partial_z u^v = 0, \\
u^h \text{ is even with respect to the vertical variable } x_v, \\
u^v \text{ is odd with respect to the vertical variable } x_v,
\end{array}
\right.
\end{eqnarray}
where \( u = (u^h, u^v) \) with \( u^h \) the horizontal component and \( u^v \) the vertical one, 
\(\nabla_h := \begin{pmatrix} \partial_1 \\ \partial_2 \end{pmatrix} \) the horizontal gradient, 
\(\operatorname{div}_h V := \partial_1 V_1 + \partial_2 V_2 \) the horizontal divergence, 
\(\Delta_h := \operatorname{div}_h \nabla_h \) the horizontal Laplacian, and \( \nu_h > 0 \) the horizontal viscosity of the fluid. We would also like to point out that the model studied has no vertical viscosity.

We will denote by \( \Omega \) the spatial domain (either \( \Omega_1 \) or \( \Omega_2 \)), and by \( \Omega_h \) its horizontal component (either \( \mathcal{T}_h^2 \) or \( \mathbb{R}^2 \)).

The periodic vertical boundary conditions and the symmetry assumptions imply that on \( \{(x,y,z) \in \Omega, \ z = \pm 1\} \), we have:
\[
\partial_z u^h = 0, \quad \text{and} \quad u^v = 0.
\]

The mathematical analysis of \eqref{Equations primitives} dates back from the works of J.-L. Lions, Temam, and Wang \cite{Lions1}, \cite{Lions2}, \cite{Lions3} in the 1990s, who studied the global existence of weak solutions for these equations coupled with a temperature equation on a spherical shell, including vertical viscosity.

In 2001, Guillén-Gonzalez, Masmoudi, and Rodriguez-Bellido \cite{Guillen} proved local well-posedness in \( H^1 \) for \eqref{Equations primitives} with vertical viscosity and different boundary conditions: they worked on a bounded domain of the form
\begin{equation} \label{tilde Omega}
\tilde{\Omega} := S \times [-h,0],
\end{equation}
where \( S \) is a bounded surface in \( \mathbb{R}^2 \) and \( h: S \to \mathbb{R}_+ \) represents depth, with Neumann-type boundary conditions on
\[
\Gamma := \{(x,y,0) \mid (x,y) \in S\}
\]
and Dirichlet conditions on the rest of the boundary.
\\
Li and Titi studied various forms of these equations with temperature, salinity, or different boundary conditions (see for instance \cite{Titi1} for an overview). Notably, in \cite{Titi2}, they proved global existence of strong solutions for arbitrary \( H^1 \) initial data for a version of \eqref{Equations primitives} coupled with
\[
\partial_t T + u \cdot \nabla T - \Delta_h T = 0
\]
under boundary conditions similar to those in our study. However, since \( T \) is nonzero on part of the boundary, the solution \( T \equiv 0 \) is not admissible: hence, \eqref{Equations primitives} is not a special case.

In \cite{Titi3}, they also obtained global existence of strong solutions with arbitrary \( H^1 \) data for \eqref{Equations primitives} with vertical viscosity and boundary conditions on \( \tilde{\Omega} \) defined in \eqref{tilde Omega} (with constant \( h \)) given by:
\[
\left\{
\begin{array}{ll}
\text{on } \{z = 0 \text{ or } z = -h\} & \text{same conditions}, \\
\text{on } \{(x,y) \in \partial S, -h \le z \le 0\} & v \cdot \vec{n} = 0, \quad \partial_{\vec{n}} v \times \vec{n} = 0,
\end{array}
\right.
\]
where \( \vec{n} \) is the outward normal vector to the boundary.

Around 2020, global existence results in \( L^2 \)-type spaces (via maximal regularity methods) were obtained by Hieber et al. \cite{Hieber2}, \cite{Hieber1} and Giga et al. \cite{Giga2}, \cite{Giga1}, for \eqref{Equations primitives} with isotropic viscosity, on a bounded domain with periodic boundary conditions like those in this article. However, their solutions are only local in time and require high regularity for the data.

In \cite{Moi}, we improved on this by proving global existence for small data in isotropic Besov spaces close to the critical ones.

In the present article, we establish a global existence and uniqueness result for strong solutions with small initial data in a critical anisotropic Besov space introduced by Paicu in \cite{Paicu1}. These spaces reflect the system's anisotropy, and the result improves on \cite{Moi} in terms of regularity. Moreover, the vertical viscosity, crucial in \cite{Moi}, is no longer needed here. The analysis is carried out on the torus under the symmetry conditions described in \eqref{Equations primitives}, which differ from those studied by Titi and Li in \cite{Titi3}, \cite{Titi2}, \cite{Titi1}.

Our result can be compared to that of Gallagher in \cite{Gallagher}, who studied the 2D Navier-Stokes equations on the 3D torus with three components and obtained global existence for large data. In our case, for the 3D primitive equations with two components, we do not have global existence for large data.

We also explore another aspect of the primitive equations \eqref{Equations primitives}: demonstrating that they can be derived from the Navier-Stokes equations.

To proceed, let us first recall the formal derivation. Let us consider the anisotropic Navier-Stokes equations in a thin domain. \( \Omega_{1,\varepsilon} = (-1,1)^2 \times (-\varepsilon,\varepsilon) \) or \( \Omega_{2,\varepsilon} = \mathbb{R}^2 \times (-\varepsilon,\varepsilon) \):
\begin{eqnarray} \label{NS}
\left\{
\begin{array}{l}
\partial_t \tilde{u} + \tilde{u} \cdot \nabla \tilde{u} - \nu_h \Delta_h \tilde{u} - \nu_z \partial_z^2 \tilde{u} + \nabla \tilde{p} = 0, \\
\operatorname{div} \tilde{u} = 0,
\end{array}
\right.
\end{eqnarray}
with \( \nu_z = \varepsilon^\gamma \), \( \gamma > 2 \). Defining the rescaled unknowns
\[
\begin{aligned}
u_\varepsilon^h(x_h,x_v,t) &:= \tilde{u}^h(x_h, \varepsilon x_v, t), \\
u_\varepsilon^v(x_h,x_v,t) &:= \varepsilon^{-1} \tilde{u}^v(x_h, \varepsilon x_v, t), \\
u_\varepsilon &:= (u_\varepsilon^h, u_\varepsilon^v), \\
p_\varepsilon(x_h,x_v,t) &:= \tilde{p}(x_h, \varepsilon x_v, t),
\end{aligned}
\]
we rewrite \eqref{NS} as:
\begin{eqnarray} \label{NS remise à l'échelle}
\left\{
\begin{array}{l}
\partial_t u_\varepsilon^h + u_\varepsilon \cdot \nabla u_\varepsilon^h - \nu_h \Delta_h u_\varepsilon^h - \varepsilon^{\gamma-2} \partial_z^2 u_\varepsilon^h + \nabla_h p_\varepsilon = 0, \\
\varepsilon^2 \left( \partial_t u_\varepsilon^v + u_\varepsilon \cdot \nabla u_\varepsilon^v - \nu_h \Delta_h u_\varepsilon^v - \varepsilon^{\gamma-2} \partial_z^2 u_\varepsilon^v \right) + \partial_z p_\varepsilon = 0, \\
\operatorname{div} u_\varepsilon = 0, \\
u_\varepsilon^h \text{ even with respect to } z, \\
u_\varepsilon^v \text{ odd with respect to } z,
\end{array}
\right.
\end{eqnarray}
on the domain \( \Omega \) (independent of \( \varepsilon \)), with the same parity and periodicity conditions as in \eqref{Equations primitives}.

Formally, taking the limit \( \varepsilon \to 0 \) in \eqref{NS remise à l'échelle} yields the primitive equations \eqref{Equations primitives}.

On the 3D torus, this limit (for \( \gamma = 2 \) and primitive equations with vertical viscosity) was justified locally in time by Hieber et al. in \cite{Hieber2} using maximal parabolic regularity. In \cite{Moi}, we justified it globally in time in the same functional setting as the primitive equations, with less regular data.

For well-prepared initial data, Li, Titi, and Yuan studied this limit in \cite{Titi4}, proving convergence of Leray–Hopf weak solutions of \eqref{NS remise à l'échelle} to the unique strong solution of the primitive equations. However, for the well-posedness of the primitive equations, they refer to \cite{Titi5} and \cite{Titi2}, which deal with global strong solutions for arbitrary \( H^1 \) data in a different framework (involving temperature and incompatible boundary conditions with \( T = 0 \)).

In this article, we rigorously prove a convergence result in the sense of distributions between the strong solutions of the anisotropic Navier-Stokes equations \eqref{NS remise à l'échelle} (after establishing their well-posedness and uniqueness) and the constructed solutions of \eqref{Equations primitives}.

\subsection{Functional Spaces and notation}

We will use the anisotropic Besov space on the torus \( \Omega_1 \) introduced by Paicu in \cite{Paicu2}, defined by:
\[
\left\{ u \in \mathcal{S}'(\Omega_1) \ \middle| \ \|u\|_{\mathcal{B}^{0,1/2}} < +\infty \right\},
\]
where
\[
\displaylines{
\|u\|_{\mathcal{B}^{0,\frac{1}{2}}} := \left( \sum_{n' \in \mathbb{Z}^2} |\mathcal{F}u(n',0)|^2 \right)^{1/2} \hfill \cr
\hfill + \sum_{q \in \mathbb{N}} \left( \sum_{n' \in \mathbb{Z}^2} \sum_{2^q \leq |n_3| \leq 2^{q+1}} (1+|n_3|) |\mathcal{F}u(n)|^2 \right)^{1/2},
}
\]
with \( \mathcal{F}u(n) \) denoting the Fourier coefficients of \( u \).

An equivalent expression is given by:
\[
\|u\|_{\mathcal{B}^{0,1/2}} = \sum_{q \geq -1} 2^{q/2} \| \Delta_q^v u \|_{L^2(\Omega_1)} < +\infty,
\]
where the dyadic block \( \Delta_q^v \) is defined in Appendix \ref{Théorie de Littlewood-Paley anisotrope}.

We will also use the anisotropic Sobolev space \( H^{0,1/2} \), whose norm is defined as:
\[
\|u\|_{H^{0,1/2}}^2 := \sum_{n \in \mathbb{Z}^3} (1 + |n_3|) |\mathcal{F}u(n)|^2.
\]
Observing that this norm can also be written as:
\[
\|u\|_{H^{0,1/2}}^2 = \sum_{n' \in \mathbb{Z}^2} |\mathcal{F}u(n',0)|^2 + \sum_{q \in \mathbb{N}} \left( \sum_{n' \in \mathbb{Z}^2} \sum_{2^q \leq |n_3| < 2^{q+1}} (1+|n_3|) |\mathcal{F}u(n)|^2 \right),
\]
we see that it is equivalent to the following norm:
\begin{equation} \label{norme sobolev}
\|u\|_{H^{0,1/2}}^2 := \sum_{q \geq -1} \left( 2^{q/2} \|\Delta_q^v u\|_{L^2(\Omega_1)} \right)^2.
\end{equation}

More generally, we can define the anisotropic Sobolev space \( H^{0,s} \) with the norm:
\begin{equation} \label{norme sobolev2}
\|u\|_{H^{0,s}}^2 := \sum_{q \geq -1} \left( 2^{qs} \|\Delta_q^v u\|_{L^2(\Omega_1)} \right)^2.
\end{equation}

These definitions extend to the case \( \Omega_2 \); see Appendix \ref{Théorie de Littlewood-Paley anisotrope} for details.
\\
We also introduce time-dependent Besov-type spaces for vector fields, which incorporate Lebesgue-in-time integrability on the vertical dyadic blocks. We define:
\[
\|u\|_{\tilde{L}_T^\infty(\mathcal{B}^{0,1/2})} := \sum_{q \geq -1} 2^{q/2} \| \Delta_q^v u \|_{L_T^\infty(L^2)}.
\]

The space \( \tilde{L}_T^2(\mathcal{B}^{0,1/2}) \) is defined by the norm:
\[
\|u\|_{\tilde{L}_T^2(\mathcal{B}^{0,1/2})} := \sum_{q \geq -1} 2^{q/2} \| \Delta_q^v u \|_{L_T^2(L^2)}.
\]

Observe that, owing to Minskowski inequality:
\[
\|u\|_{L_T^\infty(\mathcal{B}^{0,1/2})} \leq \|u\|_{\tilde{L}_T^\infty(\mathcal{B}^{0,1/2})}, \quad
\|u\|_{L_T^2(\mathcal{B}^{0,1/2})} \leq \|u\|_{\tilde{L}_T^2(\mathcal{B}^{0,1/2})}.
\]

We refer to Appendix \ref{Théorie de Littlewood-Paley anisotrope} for more details and properties on these spaces.

Moreover, in order to simplify the demonstrations based on paradifferential calculus, we will note with $c_q$ a generic sequence which has a summable square root, i.e. $c_q\geq 0$ and $$\sum_{q\in\Z}\sqrt{c_q}\leq 1.$$ We will denote by $d_q$ a generic summable sequence such that $$\sum_{q\in\Z}d_q\leq 1.$$
\subsection{Main Results}

We now present the two theorems of global existence and uniqueness for the primitive equations \eqref{Equations primitives} that we prove in this article.

\begin{thm}\label{Caractère bien-posé équations primitives}
There exists a constant \( C \) such that for any divergence-free initial velocity \( u_0 = (u_0^h, u_0^v) \) satisfying \( u_0^h \in \mathcal{B}^{0,1/2}(\Omega) \) and decomposed as \( u_0^h = \overline{u}_0^h + \tilde{u}_0^h \), with \( \overline{u}_0^h \in L^2(\Omega_h) \) and \( \tilde{u}_0^h \in \mathcal{B}^{0,1/2}(\Omega) \), and satisfying the smallness condition:
\begin{equation}\label{hypothèse de petitesse}
\|\tilde{u}_0^h\|_{\mathcal{B}^{0,1/2}(\Omega)} \exp\left( \frac{\|\overline{u}_0^h\|_{L^2(\Omega_h)}}{C \nu_h} \right) \leq c \nu_h,
\end{equation}
then \eqref{Equations primitives} admits a unique global solution \( u = (u^h, u^v) \) such that:
\[
u^h \in \mathcal{C}_b(\mathbb{R}_+, \mathcal{B}^{0,1/2}(\Omega)), \quad \nabla_h u^h \in L^2(\mathbb{R}_+, \mathcal{B}^{0,1/2}(\Omega)).
\]
\end{thm}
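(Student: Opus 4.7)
The strategy is the classical barotropic/baroclinic splitting exploited by Paicu–Zhang and others for anisotropic systems, combined with vertical Littlewood–Paley analysis to handle the space $\mathcal{B}^{0,1/2}$. Write $u^h = \overline{u}^h + \tilde{u}^h$, where $\overline{u}^h(x_h,t) := \tfrac{1}{2}\int_{-1}^{1} u^h(x_h,z,t)\,dz$ is the vertical mean and $\tilde{u}^h$ has zero vertical mean. The parity conditions and the boundary conditions $u^v|_{z=\pm 1}=0$ give $\operatorname{div}_h \overline{u}^h = 0$, and the vertical velocity can be recovered as $u^v(x_h,z,t) = -\int_0^z \operatorname{div}_h \tilde{u}^h(x_h,z',t)\,dz'$, which is crucial since it expresses $u^v$ entirely in terms of $\tilde{u}^h$ and gains a half-vertical derivative.

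Vertically averaging the momentum equation produces a 2D Navier–Stokes system for $\overline{u}^h$ with a quadratic source built from $\tilde{u}^h$; on $\mathbb{R}^2$ or $\mathcal{T}^2$ this is globally well-posed in $L^2(\Omega_h)$ and yields an a priori control of the form
\[
\|\overline{u}^h(t)\|_{L^2(\Omega_h)}^2 + \nu_h \int_0^t \|\nabla_h \overline{u}^h\|_{L^2(\Omega_h)}^2\,ds \;\lesssim\; \|\overline{u}_0^h\|_{L^2(\Omega_h)}^2 + (\text{source from }\tilde u^h).
\]
The fluctuation $\tilde{u}^h$ satisfies the remaining anisotropic system, which I would attack by applying $\Delta_q^v$ to each equation, taking the $L^2(\Omega)$ inner product with $\Delta_q^v \tilde{u}^h$, and summing with the weights $2^{q/2}$. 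The horizontal viscosity produces the dissipation $\nu_h \|\nabla_h \Delta_q^v \tilde u^h\|_{L^2}^2$ (no vertical smoothing is available, reflecting the absence of $\partial_z^2$), and the pressure term vanishes thanks to horizontal divergence-freeness combined with the recovery formula for $u^v$.

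The core technical work lies in the nonlinear estimates, which I would split using a vertical Bony decomposition into the three interaction types $\overline{u}^h\cdot\nabla_h\tilde{u}^h$, $\tilde{u}^h\cdot\nabla_h\tilde{u}^h$, and $u^v \partial_z \tilde{u}^h$. The mixed interactions with $\overline{u}^h$ produce a right-hand side bounded by $\|\nabla_h \overline{u}^h\|_{L^2(\Omega_h)} \|\tilde{u}^h\|_{\mathcal{B}^{0,1/2}} \|\nabla_h \tilde{u}^h\|_{\mathcal{B}^{0,1/2}}$ after the standard trick of placing one horizontal derivative on the smoother factor and invoking the summability conventions for $c_q$, $d_q$ recalled in the excerpt; these must be absorbed by a Gronwall loop, which is the origin of the exponential factor $\exp(\|\overline{u}_0^h\|_{L^2}/(C\nu_h))$ appearing in the smallness condition. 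The self-interactions of $\tilde{u}^h$ and the transport term $u^v\partial_z\tilde u^h$ (where the loss of one horizontal derivative on $u^v$ is exactly compensated by the $1/2$ vertical regularity, since $\partial_z u^v = -\operatorname{div}_h\tilde u^h$) give genuinely quadratic terms in $\|\tilde u^h\|_{\tilde L^\infty_T\mathcal{B}^{0,1/2}}\|\nabla_h\tilde u^h\|_{\tilde L^2_T\mathcal{B}^{0,1/2}}$, which are absorbed using the smallness assumption.

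Once these a priori estimates are in place, I would produce global solutions by the usual scheme: construct approximations (e.g.\ vertical Friedrichs truncation or Galerkin in the vertical variable), derive uniform bounds via the previous estimates, and pass to the limit in the sense of distributions using standard compactness. Uniqueness is obtained by running the same dyadic energy estimate on the difference of two solutions in a slightly weaker space (typically $\mathcal{B}^{0,1/2}$ itself or $L^2$), exploiting again the smallness and Gronwall. \textbf{The main obstacle} I expect is the careful bookkeeping of the nonlinear term $u^v\partial_z \tilde u^h$: since the model lacks vertical viscosity, every piece of vertical regularity must be traded against horizontal derivatives through the formula for $u^v$, and the $1/2$-index anisotropic Besov norm is precisely at the critical threshold where this trade-off closes, so the paraproduct estimate has to be performed with the sharpest possible control on the $c_q$ sequences in order not to lose a logarithm.
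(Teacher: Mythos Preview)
Your existence strategy is essentially that of the paper: the barotropic/baroclinic splitting, vertical dyadic energy estimates on $\tilde u^h$, convection lemmas for $u^v\partial_z\tilde u^h$ via the identity $\partial_z u^v=-\operatorname{div}_h u^h$, a bootstrap closing under smallness, and Friedrichs approximations. Two points of divergence are worth flagging. First, you take $\overline u^h$ to be the \emph{actual} vertical mean of the solution, which forces a quadratic source into the 2D equation and couples the two estimates. The paper instead \emph{defines} $\overline u^h$ as the solution of the pure 2D Navier--Stokes system with data $\overline u_0^h$ (no source), so that the exact energy identity $\|\overline u^h(t)\|_{L^2}^2+2\nu_h\int_0^t\|\nabla_h\overline u^h\|_{L^2}^2=\|\overline u_0^h\|_{L^2}^2$ is available a priori; all the coupling then sits in the $\tilde u^h$ equation. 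This makes the Gronwall/exponential-weight step clean: the paper introduces $\tilde u_\lambda=e^{-\lambda\int_0^t\|\nabla_h\overline u^h\|_{L^2}^2}\tilde u$ and chooses $\lambda$ to kill the linear-in-$\tilde u$ coupling term $\tilde u^h\cdot\nabla_h\overline u^h$ (the other mixed term $\overline u^h\cdot\nabla_h\tilde u^h$ actually vanishes after localisation and integration by parts, since $\overline u^h$ is $z$-independent and horizontally divergence-free). Your version would work too, but the bookkeeping is heavier.

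The genuine gap is uniqueness. You propose to run the dyadic energy estimate on the difference in ``$\mathcal B^{0,1/2}$ itself or $L^2$'' and close by smallness and Gronwall. This does not work, and the paper explains why: in the difference equation the term $w^v\,\partial_z u_2^h$ is the obstruction. Vertically, $w^v$ lives in $B^{1/2}_{2,1}$ (via $\partial_z w^v=-\operatorname{div}_h w^h$ and Poincar\'e) while $\partial_z u_2^h$ is only in $B^{-1/2}_{2,1}$, so their product falls in $B^{-1/2}_{2,\infty}$; the $\ell^1$ summation of $\mathcal B^{0,1/2}$ cannot be recovered and no smallness saves this. The paper therefore estimates the difference in $H^{0,-1/2}$ (the $\ell^2$-based space), splits the bad low-frequency interactions with a cut-off parameter $N$ optimised as $N\sim -\ln\|w^h\|_{H^{0,-1/2}}^2$, and obtains only
\[
\frac{d}{dt}\,\phi \;\le\; C f(t)\,\phi\,(1-\ln\phi)\,\ln(1-\ln\phi),\qquad \phi(t)=\|w^h(t)\|_{H^{0,-1/2}}^2,
\]
with $f\in L^1_{\mathrm{loc}}$. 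This is a \emph{double-logarithmic} loss, so Gronwall is unavailable and one must invoke Osgood's lemma with the modulus $\mu(r)=r(1-\ln r)\ln(1-\ln r)$. Your worry about ``not losing a logarithm'' is well placed, but it bites in the uniqueness argument rather than in the a priori estimate, and the loss is in fact unavoidable there.
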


\begin{remark}
The vertical component \( u^v \) is given explicitly by the formula:
\[
u^v = -\int_{-1}^z \operatorname{div}_h u^h \, dz'.
\]
\end{remark}

\begin{thm}\label{théorème unicité eq primitives}
The system \eqref{Equations primitives} has at most one solution in the space
\begin{multline}\label{tilde E définition}
\tilde{E} := \bigg\{ u = (u^h, u^v) \ \bigg| \ u^h \in \mathcal{C}_b(\R^+;H^{0,1/2}), \\ \nabla_h u^h \in L^2(\R^+;H^{0,1/2}) \bigg\}.
\end{multline}
\end{thm}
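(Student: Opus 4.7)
The plan is to perform an anisotropic $H^{0,1/2}$ energy estimate on the difference of two hypothetical solutions and conclude by Gronwall's lemma. Let $u_1, u_2\in\tilde E$ share the same initial datum, and set $w := u_1-u_2$, $\pi := p_1-p_2$. Subtracting the two copies of \eqref{Equations primitives} yields
$$\partial_t w^h + u_1\cdot\nabla w^h + w\cdot\nabla u_2^h - \nu_h\Delta_h w^h + \nabla_h\pi = 0,$$
together with $\operatorname{div}w=0$, $\partial_z\pi=0$, $w^h|_{t=0}=0$, and the reconstruction $w^v = -\int_{-1}^{z}\operatorname{div}_h w^h\,dz'$, which will play a crucial role by trading a vertical derivative of $w^v$ for a horizontal derivative of $w^h$.

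I would then apply the vertical dyadic block $\Delta_q^v$ and take the $L^2(\Omega)$ scalar product with $\Delta_q^v w^h$. The viscous term delivers $\nu_h\|\nabla_h\Delta_q^v w^h\|_{L^2}^2$, while the pressure is painless: since $\pi$ depends only on $(x_h,t)$, it carries only the zero vertical Fourier mode, so $\Delta_q^v\nabla_h\pi\equiv 0$ for all $q\geq 0$, and the $q=-1$ block is treated using $\partial_z w^v = -\operatorname{div}_h w^h$ together with the vertical Bernstein inequality. Multiplying by $2^q$ and summing over $q\geq -1$, in view of \eqref{norme sobolev}, the goal is to obtain a differential inequality of the form
$$\frac{d}{dt}\|w^h\|_{H^{0,1/2}}^2 + \nu_h\|\nabla_h w^h\|_{H^{0,1/2}}^2 \leq C(t)\,\|w^h\|_{H^{0,1/2}}^2,$$
with $C\in L^1_{\mathrm{loc}}(\R_+)$ depending on $\|u_j^h\|_{H^{0,1/2}}$ and $\|\nabla_h u_j^h\|_{H^{0,1/2}}^2$. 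Since $w^h(0)=0$, Gronwall's lemma then forces $w^h\equiv 0$, hence $w\equiv 0$.

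The trilinear terms coming from $u_1\cdot\nabla w^h$ and $w\cdot\nabla u_2^h$ are estimated via Bony's paraproduct in the vertical variable, using systematically the Bernstein-type embedding $\|\Delta_q^v f\|_{L^\infty_v L^2_h}\lesssim 2^{q/2}\|\Delta_q^v f\|_{L^2}$, which encodes the scaling of $H^{0,1/2}$. The leading piece of $u_1\cdot\nabla w^h$ tested against $w^h$ cancels thanks to $\operatorname{div}u_1=0$, leaving paraproduct commutators that can be controlled by $c_q\|w^h\|_{H^{0,1/2}}\|\nabla_h w^h\|_{H^{0,1/2}}$-type bounds. The delicate contributions are the vertical transports $u_1^v\partial_z w^h$ and $w^v\partial_z u_2^h$: in each case one integrates by parts in $z$ and substitutes $\partial_z u_1^v = -\operatorname{div}_h u_1^h$ and $\partial_z w^v = -\operatorname{div}_h w^h$, so that every vertical derivative is converted into a horizontal one and then absorbed by a fraction of $\nu_h\|\nabla_h w^h\|_{H^{0,1/2}}^2$.

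The main obstacle is precisely this vertical-transport step: since the system carries no vertical viscosity and the vertical regularity index $\tfrac12$ is exactly critical for \eqref{Equations primitives}, the paraproduct decomposition must be performed in the vertical variable and no vertical derivative can ever be lost. Ensuring at each dyadic scale $q$ that the trade via the divergence-free condition produces the correct summable sequence and exactly the right power of $\|\nabla_h\Delta_q^v w^h\|_{L^2}$ to be absorbed into the viscous term is the technical crux of the argument.
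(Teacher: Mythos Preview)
Your strategy breaks down precisely at the term $w^v\partial_z u_2^h$, and the paper explains why. Your integration-by-parts claim works for $u_1^v\partial_z w^h$ because of the structure $a^v\partial_z b\cdot b=\tfrac12 a^v\partial_z|b|^2$, which after IBP produces $\partial_z a^v=-\operatorname{div}_h a^h$. But $w^v\partial_z u_2^h$ tested against $w^h$ has no such structure: integrating by parts in $z$ gives
\[
-(\partial_z w^v)\,u_2^h\cdot w^h \;-\; w^v\,u_2^h\cdot\partial_z w^h,
\]
and the second piece reintroduces a vertical derivative on $w^h$, which you control no better than $\partial_z u_2^h$. So the vertical derivative genuinely stays on $u_2^h$, and you must estimate the product $w^v\cdot\partial_z u_2^h$ directly. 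In the vertical variable, $w^v\in H^{1/2}_v$ (via Poincar\'e and $\partial_z w^v=-\operatorname{div}_h w^h$) while $\partial_z u_2^h\in H^{-1/2}_v$; this is the endpoint product $H^{1/2}\times H^{-1/2}$, which only lands in $B^{-1/2}_{2,\infty}$, not in $H^{-1/2}$. Concretely, in the paraproduct $S_{q-1}^v w^v\,\Delta_q^v\partial_z u_2^h$ the Bernstein factor $2^q$ from $\partial_z$ is not compensated, and the $H^{0,1/2}$ sum picks up an uncontrolled $2^q$ per block; no Gronwall inequality closes.

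This is exactly why the paper does \emph{not} run the difference estimate in $H^{0,1/2}$. It drops to $H^{0,-1/2}$ (where the product law above is tolerable up to a logarithmic loss), splits the low vertical frequencies of $w^v$ at a threshold $N\sim -\ln\|w^h\|_{H^{0,-1/2}}^2$, and obtains a double-logarithmic differential inequality
\[
\phi'(t)\le Cf(t)\,\phi(t)\,(1-\ln\phi(t))\,\ln(1-\ln\phi(t)),\qquad \phi=\|w^h\|_{H^{0,-1/2}}^2,
\]
with $f\in L^1_{\mathrm{loc}}$. Uniqueness then follows from Osgood's lemma, not Gronwall. Your sketch would need to replace the $H^{0,1/2}$ energy by an $H^{0,-1/2}$ one and accept this Osgood-type closure; the ``absorb everything into $\nu_h\|\nabla_h w^h\|_{H^{0,1/2}}^2$'' plan cannot succeed at the critical vertical index.
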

~
\\
We also prove an existence and uniqueness theorem for the scaled anisotropic Navier-Stokes system \eqref{NS remise à l'échelle}.

\begin{thm}\label{Caractère bien posé NS} Let \( \varepsilon > 0 \). There exists a constant \( C \) such that for any divergence-free initial data \( u_{\varepsilon,0} = (u_{\varepsilon,0}^h, u_{\varepsilon,0}^v) \) with \( u_{\varepsilon,0} \in \mathcal{B}^{0,1/2}(\Omega) \), decomposed as \( u_{\varepsilon,0} = \overline{u}_{\varepsilon,0} + \tilde{u}_{\varepsilon,0} \), where \( \overline{u}_{\varepsilon,0} \in L^2(\Omega_h) \) and \( \tilde{u}_{\varepsilon,0} \in \mathcal{B}^{0,1/2}(\Omega) \), and satisfying the smallness condition:
\begin{equation}\label{hypothèse de petitesse NS}
\|(\tilde{u}_{\varepsilon,0}^h, \varepsilon \tilde{u}_{\varepsilon,0}^v)\|_{\mathcal{B}^{0,1/2}(\Omega)} \exp\left( \frac{\|\overline{u}_{\varepsilon,0}\|_{L^2(\Omega_h)}}{C \nu_h} \right) \leq c \nu_h,
\end{equation}
then \eqref{NS remise à l'échelle} admits a unique global solution \( u_\varepsilon = (u_\varepsilon^h, u_\varepsilon^v) \) such that:
\[
u_\varepsilon \in \mathcal{C}_b(\mathbb{R}_+, \mathcal{B}^{0,1/2}(\Omega)), \quad \nabla u_\varepsilon \in L^2(\mathbb{R}_+, \mathcal{B}^{0,1/2}(\Omega)),
\]
and satisfies the following a priori estimate for all \( T > 0 \):
\begin{multline}\label{estimée a priori NS anisotrope}
\|(u_\varepsilon^h, \varepsilon u_\varepsilon^v)\|_{\tilde{L}_T^\infty(\mathcal{B}^{0,1/2})}
+ \sqrt{\nu_h} \|\nabla_h (u_\varepsilon^h, \varepsilon u_\varepsilon^v)\|_{\tilde{L}_T^2(\mathcal{B}^{0,1/2})} \\
+ \sqrt{\varepsilon^{\gamma - 2}} \|\partial_z (u_\varepsilon^h, \varepsilon u_\varepsilon^v)\|_{\tilde{L}_T^2(\mathcal{B}^{0,1/2})} \leq C_0,
\end{multline}
where \( C_0 \) is a constant depending only on the initial data.
\end{thm}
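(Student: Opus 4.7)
My plan is to adapt the template used for Theorem \ref{Caractère bien-posé équations primitives} to the scaled Navier--Stokes setting, working with the natural unknown $U_\varepsilon := (u_\varepsilon^h, \varepsilon u_\varepsilon^v)$. Dividing the second line of \eqref{NS remise à l'échelle} by $\varepsilon$, the system rewrites as a Navier--Stokes-like system for $U_\varepsilon$ with anisotropic viscosities $\nu_h \Delta_h$ and $\varepsilon^{\gamma-2}\partial_z^2$, incompressibility $\operatorname{div}_h u_\varepsilon^h + \partial_z u_\varepsilon^v = 0$, and modified pressure gradient $(\nabla_h p_\varepsilon, \varepsilon^{-1}\partial_z p_\varepsilon)$. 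The norms appearing in \eqref{estimée a priori NS anisotrope} are then precisely the natural energy norms of $U_\varepsilon$. The crux is to establish the a priori estimate; existence and uniqueness will follow from standard approximation and difference arguments.

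I decompose $u_\varepsilon^h = \overline{u}_\varepsilon^h + \tilde{u}_\varepsilon^h$, where the bar denotes the vertical average. Since $u_\varepsilon^v$ is odd in $z$, $\overline{u}_\varepsilon^v\equiv 0$, and the full fluctuation is $\tilde{U}_\varepsilon := (\tilde{u}_\varepsilon^h, \varepsilon u_\varepsilon^v)$. Integrating the horizontal equation in $z$ kills the vertical-dissipation term (by periodicity) and yields a 2D Navier--Stokes equation on $\Omega_h$ for $\overline{u}_\varepsilon^h$, perturbed by the Reynolds-type source $-\operatorname{div}_h\overline{\tilde{u}_\varepsilon^h\otimes\tilde{u}_\varepsilon^h}$. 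A Leray-type $L^2(\Omega_h)$ energy estimate, combined with 2D inequalities and Gronwall, provides a global-in-time control of $\overline{u}_\varepsilon^h$ in $L^\infty(L^2)\cap L^2(\dot H^1)$ in terms of $\|\overline{u}_{\varepsilon,0}\|_{L^2}$ and of $\tilde{U}_\varepsilon$ (the latter being treated as a perturbation under the smallness hypothesis).

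The core step is the anisotropic Besov estimate for $\tilde{U}_\varepsilon$. I apply $\Delta_q^v$ to the perturbed system satisfied by $\tilde{U}_\varepsilon$, take the $L^2(\Omega)$ scalar product with $\Delta_q^v \tilde{U}_\varepsilon$, and sum after multiplication by the dyadic weight $2^{q/2}$. The left-hand side reconstructs the three norms appearing in \eqref{estimée a priori NS anisotrope}; the nonlinear and coupling contributions on the right are treated via a vertical Bony decomposition together with the anisotropic Bernstein inequalities of Appendix \ref{Théorie de Littlewood-Paley anisotrope}. The delicate terms involving $u_\varepsilon^v \partial_z u_\varepsilon^h$ are rewritten, via incompressibility, as horizontal-divergence expressions (of the form $\operatorname{div}_h(u_\varepsilon^h u_\varepsilon^h)$), so that no uncontrolled vertical derivative remains and only horizontal dissipation is invoked; in particular the bounds are uniform in $\varepsilon$. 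I am led to an inequality of schematic form
\[
X(t)^2 \leq \|\tilde{U}_{\varepsilon,0}\|_{\mathcal{B}^{0,1/2}}^2 + \tfrac{C}{\nu_h}\,\phi(\overline{u}_\varepsilon^h)\, X(t)^2 + \tfrac{C}{\nu_h}\, X(t)^4,
\]
where $X(t)$ denotes the left-hand side of \eqref{estimée a priori NS anisotrope} and $\phi(\overline{u}_\varepsilon^h)$ is a Gronwall-type quantity controlled by $\|\overline{u}_{\varepsilon,0}\|_{L^2}/\nu_h$. The smallness assumption \eqref{hypothèse de petitesse NS} is then exactly what is needed to close $X(t)\leq C_0$ for all $t\geq 0$ by a bootstrap.

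Existence is then produced by a Friedrichs (or Galerkin) approximation scheme: the truncated problems admit classical global solutions satisfying the same a priori bound uniformly, and a standard compactness argument passes to the limit in the required class. Uniqueness follows from an energy estimate on the difference of two solutions carried out in a slightly weaker norm (for instance $H^{0,1/2}$), in the spirit of Theorem \ref{théorème unicité eq primitives}, again with an exponential Gronwall factor produced by the 2D part. The main technical obstacle I foresee is the paradifferential estimate of the critical term $u_\varepsilon^v \partial_z u_\varepsilon^h$ in $\mathcal{B}^{0,1/2}$ uniformly in $\varepsilon$: the incompressibility trick is essential, and the vertical paraproduct/remainder decomposition must be performed with care so as not to lose integrability in the horizontal variables; this is precisely where the anisotropic nature of the space $\mathcal{B}^{0,1/2}$ pays off.
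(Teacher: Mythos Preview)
Your overall strategy matches the paper's: work with $U_\varepsilon=(u_\varepsilon^h,\varepsilon u_\varepsilon^v)$, split off a two-dimensional part, perform a $\Delta_q^v$-localized energy estimate on the fluctuation using the convection lemmas of Appendix~\ref{convection} (which implement precisely the ``incompressibility trick'' you describe, via Chemin's decomposition~\eqref{décomposition Chemin}), close by a bootstrap under~\eqref{hypothèse de petitesse NS}, and conclude existence by Friedrichs approximation.

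The one substantive difference is the decomposition. You take $\overline{u}_\varepsilon^h$ to be the \emph{actual vertical average} of the solution, which produces a \emph{perturbed} 2D Navier--Stokes equation with Reynolds source $-\operatorname{div}_h\overline{\tilde u_\varepsilon^h\otimes\tilde u_\varepsilon^h}$ and hence a genuinely coupled bootstrap for $(\overline u_\varepsilon^h,\tilde U_\varepsilon)$. The paper instead \emph{defines} $\overline{u}_\varepsilon^h$ as the solution of the \emph{pure} 2D Navier--Stokes system~\eqref{Navier-Stokes 2D epsilon} with data $\overline{u}_{\varepsilon,0}^h$, and sets $\tilde u_\varepsilon:=u_\varepsilon-\overline u_\varepsilon$. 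This decouples the 2D part completely, yields the exact energy identity~\eqref{estimée NS 2D epsilon}, and lets the $\lambda$-weighted argument of~\eqref{champ lambda}--\eqref{estimée tilde u lambda} close without any feedback from $\tilde U_\varepsilon$. Your route is workable but requires a joint continuation argument; the paper's buys a clean one-way closure. The paper also eliminates the pressure via the anisotropic Leray projector $\mathbb{P}_\varepsilon$ of~\eqref{projecteur de Leray epsilon} rather than by direct integration by parts, though both are equivalent here since $\operatorname{div}_\varepsilon\tilde U_\varepsilon=0$.

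One point in your sketch is too optimistic: uniqueness does \emph{not} follow from a plain Gronwall estimate in $H^{0,1/2}$. At this critical regularity the difference equation contains the term $w_\varepsilon^v\partial_z u_{\varepsilon,2}^h$, whose product falls into $H^{0,-1/2}$ only with an $\ell^\infty$ summability, and the paper (Lemma~\ref{lemme d'unicité NS}) must work in $H^{0,-1/2}$ with a double-logarithmic Osgood argument, exactly as for Theorem~\ref{théorème unicité eq primitives}.
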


\begin{thm}\label{théorème unicité NS}
Let \( \varepsilon > 0 \) be fixed. Then system \eqref{NS remise à l'échelle} has at most one solution \( (u_\varepsilon^h, \varepsilon u_\varepsilon^v) \) in the space:
$$\displaylines{
\tilde{E}_\varepsilon := \bigg\{ u = (u^h, u^v) \ \bigg| \ (u^h, \varepsilon u^v) \in \mathcal{C}_b(\R^+;H^{0,1/2}), \hfill\cr\hfill \nabla_h (u^h, \varepsilon u^v) \in L^2(\R^+;H^{0,1/2}) \bigg\},}$$
uniformly in \( \varepsilon \).
\end{thm}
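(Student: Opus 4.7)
The proof will proceed by a weak–strong uniqueness argument that mirrors the one used for Theorem~\ref{théorème unicité eq primitives}, adapted so that the vertical component is treated as a genuine unknown weighted by $\varepsilon$. Given two solutions $u_\varepsilon^1 = (u_\varepsilon^{1,h},u_\varepsilon^{1,v})$ and $u_\varepsilon^2$ in $\tilde E_\varepsilon$, set $w := u_\varepsilon^1 - u_\varepsilon^2$ and $\pi := p_\varepsilon^1 - p_\varepsilon^2$. Writing \eqref{NS remise à l'échelle} for both solutions and subtracting, $w$ satisfies a linear perturbation system with zero initial datum, source terms $u_\varepsilon^1 \cdot \nabla w + w \cdot \nabla u_\varepsilon^2$, horizontal viscosity, the $\varepsilon^{\gamma-2}$ vertical viscosity, the $\varepsilon^2$-weighted equation for $w^v$, the pressure gradient, and the constraint $\operatorname{div} w = 0$.

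The natural quantity to estimate is $W := (w^h, \varepsilon w^v)$ in $H^{0,1/2}$. I would apply the vertical dyadic block $\Delta_q^v$ to both scalar equations, take the $L^2$ scalar product of the first with $\Delta_q^v w^h$ and of the second with $\Delta_q^v w^v$ (the $\varepsilon^2$ prefactor naturally produces $\varepsilon^2\|\Delta_q^v w^v\|_{L^2}^2$ on the left). Since $\Delta_q^v$ commutes with the pressure operator and $\operatorname{div} w = 0$, the pressure contribution $(\nabla_h\Delta_q^v\pi,\Delta_q^v w^h)_{L^2} + (\partial_z\Delta_q^v\pi,\Delta_q^v w^v)_{L^2}$ cancels upon adding the two identities. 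Multiplying by $2^{q}$ and summing over $q\ge -1$ then yields, for each $t$,
\[
\tfrac{1}{2}\tfrac{d}{dt}\|W\|_{H^{0,1/2}}^2 + \nu_h\|\nabla_h W\|_{H^{0,1/2}}^2 + \varepsilon^{\gamma-2}\|\partial_z W\|_{H^{0,1/2}}^2 = \mathcal{N}(t),
\]
where $\mathcal{N}(t)$ collects the four nonlinear pairings. The viscous dissipation on the left is uniform in $\varepsilon$ because we kept only the $\nu_h$-term and discarded the (non-negative) $\varepsilon^{\gamma-2}$-term when needed.

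The heart of the proof is the control of $\mathcal{N}(t)$ by the dissipation plus an integrable-in-time factor times $\|W\|_{H^{0,1/2}}^2$. Following the paradifferential scheme in the vertical variable that is used throughout the paper, I would decompose each product $u\cdot \nabla w$ via Bony's vertical decomposition into paraproducts and a remainder, commute $\Delta_q^v$ with the transport operator, and absorb the commutators using the Bernstein-type inequalities for $\Delta_q^v$ stated in the appendix. Terms involving horizontal derivatives $\nabla_h$ are absorbed into $\nu_h\|\nabla_h W\|_{H^{0,1/2}}^2$ with the anisotropic interpolation $\|f\|_{L^\infty_v L^2_h}\lesssim \|f\|_{H^{0,1/2}}^{1/2}\|\partial_z f\|_{H^{0,1/2}}^{1/2}$, which is the same device that made the existence proof work. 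The divergence-free condition is used to rewrite $w^v\partial_z(\cdot)$ in non-conservative form as $-\int_{-1}^z\operatorname{div}_h w^h$ times $\partial_z(\cdot)$, so that one vertical derivative is traded for a horizontal divergence and a vertical integration, consistent with the $\varepsilon$-weighted norm on $W$.

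The main obstacle will be the term $\varepsilon^2(\Delta_q^v(w\cdot\nabla u_\varepsilon^{2,v}),\Delta_q^v w^v)_{L^2}$, because $u_\varepsilon^{2,v}$ is only at the critical regularity and the $\varepsilon$ weights must be distributed so that everything reconstructs $\|W\|_{H^{0,1/2}}$ and $\|\nabla_h W\|_{H^{0,1/2}}$ with constants \emph{independent of $\varepsilon$}. Writing $\varepsilon^2\, w\cdot\nabla u_\varepsilon^{2,v} = (w^h,\varepsilon w^v)\cdot(\varepsilon\nabla_h u_\varepsilon^{2,v},\partial_z u_\varepsilon^{2,v}\!\cdot\varepsilon)/\varepsilon^{?}$ and using $\partial_z u_\varepsilon^{2,v} = -\operatorname{div}_h u_\varepsilon^{2,h}$ allows one to express the whole nonlinearity in terms of the bounded quantity $(u_\varepsilon^{2,h},\varepsilon u_\varepsilon^{2,v})$ in $\tilde L^\infty_T(H^{0,1/2})\cap \tilde L^2_T(\nabla_h H^{0,1/2})$. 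Once every term is bounded by
\[
\mathcal{N}(t)\le \tfrac{\nu_h}{2}\|\nabla_h W\|_{H^{0,1/2}}^2 + f(t)\,\|W\|_{H^{0,1/2}}^2,
\]
with $f\in L^1_{\mathrm{loc}}$ and norms of the two solutions uniform in $\varepsilon$, Gronwall's inequality together with $W(0)=0$ forces $W\equiv 0$ on every bounded time interval, hence $u_\varepsilon^1=u_\varepsilon^2$, with constants depending only on the $\tilde E_\varepsilon$-norms and not on $\varepsilon$.
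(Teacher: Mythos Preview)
Your proposal has a genuine gap: you attempt to close a stability estimate for $W=(w^h,\varepsilon w^v)$ directly in $H^{0,1/2}$ and then invoke Gronwall. This is precisely the approach that the paper (and Paicu's original argument) explains does \emph{not} work at this critical regularity. The obstruction is the term $w^v\,\partial_z u_\varepsilon^{2,h}$ sitting inside $w\cdot\nabla u_\varepsilon^{2,h}$ (not the $\varepsilon^2$--weighted vertical term you flag as the main obstacle). Vertically, $w^v$ is at regularity $1/2$ (via the divergence condition and the Poincar\'e lemma) while $\partial_z u_\varepsilon^{2,h}$ is only at regularity $-1/2$; the product falls into a space of type $B^{-1/2}_{2,\infty}$ in the vertical scale, so there is no way to obtain an inequality of the clean form $\mathcal N(t)\le \tfrac{\nu_h}{2}\|\nabla_h W\|_{H^{0,1/2}}^2+f(t)\|W\|_{H^{0,1/2}}^2$ with $f\in L^1_{\mathrm{loc}}$. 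The paraproduct piece $T^v_{\partial_z u_\varepsilon^{2,h}} w^v$ is the concrete place where the argument breaks, since $S_{q-1}^v\partial_z u_\varepsilon^{2,h}$ cannot be controlled in $L^\infty_v$ uniformly in $q$.

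The paper therefore takes a different route: it estimates $W$ in $H^{0,-1/2}$, accepts a double--logarithmic loss, and proves (Lemma~\ref{lemme d'unicité NS}) an inequality of Osgood type
\[
\frac{d}{dt}\phi \;\le\; Cf(t)\,\phi\,(1-\ln\phi)\,\ln(1-\ln\phi),\qquad \phi(t):=\|W(t)\|_{H^{0,-1/2}}^2,
\]
with $f$ built from the $H^{0,1/2}$ norms of the two solutions (hence in $L^1_{\mathrm{loc}}$ uniformly in $\varepsilon$). Uniqueness then follows from Osgood's lemma, not Gronwall. The hard term $w^v\partial_z u_\varepsilon^{2,h}$ is handled by a high/low vertical frequency splitting of $S_{q'-1}^v w^v$ at a cutoff $N\sim -\ln\phi$ together with a parameter $\varepsilon\sim 1/\ln N$ in the horizontal H\"older exponents, which is exactly what produces the double logarithm. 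Your sketch contains none of this machinery, and the Gronwall closure you assert cannot be achieved at the regularity you are working in.
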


Finally, we state the main theorem of this article: the convergence of the solutions of the scaled anisotropic Navier-Stokes system \eqref{NS remise à l'échelle} to the unique solution of the primitive equations \eqref{Equations primitives}.

\begin{thm}\label{théorème de convergence}
Let \( (u_{\varepsilon,0})_\varepsilon \) be a sequence of initial data from Theorem~\ref{Caractère bien posé NS} converging in the sense of distributions to \( u_0 \), the initial data of Theorem~\ref{Caractère bien-posé équations primitives}. Then the corresponding sequence \( (u_\varepsilon)_\varepsilon \) of unique solutions to \eqref{NS remise à l'échelle} converges in the sense of distributions to \( u \), the unique solution of the primitive equations associated with \( u_0 \) from Theorem~\ref{Caractère bien-posé équations primitives}.
\end{thm}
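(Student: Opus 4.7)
The plan is to exploit the uniform-in-$\varepsilon$ a priori bound \eqref{estimée a priori NS anisotrope} provided by Theorem~\ref{Caractère bien posé NS}, extract weak limits along a subsequence, pass to the limit in the distributional formulation of \eqref{NS remise à l'échelle}, and then invoke Theorem~\ref{théorème unicité eq primitives} to identify the limit and upgrade subsequential convergence to full convergence of $(u_\varepsilon)_\varepsilon$. Since \eqref{hypothèse de petitesse NS} is an assumption compatible with a uniform smallness of the initial data, the constant $C_0$ in \eqref{estimée a priori NS anisotrope} can be chosen independent of $\varepsilon$, which is the key ingredient.

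The first concrete step is to read off from \eqref{estimée a priori NS anisotrope} that $u_\varepsilon^h$ is bounded in $\tilde L^\infty_{\text{loc}}(\mathcal{B}^{0,1/2})$ with $\nabla_h u_\varepsilon^h$ bounded in $\tilde L^2_{\text{loc}}(\mathcal{B}^{0,1/2})$, while $\varepsilon u_\varepsilon^v$ obeys the same bounds and consequently tends to $0$ in that topology. By weak-$*$ compactness one extracts a subsequence (still denoted $u_\varepsilon^h$) converging to some $u^h$ in the corresponding weak-$*$ sense. The incompressibility condition $\dive_h u_\varepsilon^h+\partial_z u_\varepsilon^v=0$, together with the odd-in-$z$ parity of $u_\varepsilon^v$ giving $u_\varepsilon^v(\cdot,-1)=0$, yields the explicit formula $u_\varepsilon^v(x_h,z,t)=-\int_{-1}^z \dive_h u_\varepsilon^h(x_h,z',t)\,dz'$; hence $u_\varepsilon^v$ inherits a uniform bound (one vertical derivative better than $u_\varepsilon^h$) and converges in the same sense to $u^v:=-\int_{-1}^z\dive_h u^h\,dz'$, which automatically satisfies $\dive_h u^h+\partial_z u^v=0$ with the right parity.

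To pass to the limit in the nonlinear term $u_\varepsilon\cdot\nabla u_\varepsilon^h$, the plan is to produce strong compactness in space-time on every compact subset. A direct inspection of the first equation in \eqref{NS remise à l'échelle} shows that $\partial_t u_\varepsilon^h$ is uniformly bounded in a space of distributions of negative regularity (the diffusion, pressure and convection terms all being controlled in such a space thanks to the uniform Besov bounds). Combined with the spatial bounds above, an Aubin--Lions type argument (applied locally in $x_h$ on $\Omega_2$ to handle the unboundedness) yields strong convergence of $u_\varepsilon^h$ in $L^2_{\text{loc}}(L^2_{\text{loc}})$, which together with the weak convergence of $\nabla u_\varepsilon^h$ suffices to identify the limit of $u_\varepsilon^h\cdot\nabla_h u_\varepsilon^h$ and of $u_\varepsilon^v\partial_z u_\varepsilon^h$ as $u\cdot\nabla u^h$ in $\mathcal{D}'$. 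The pressure is handled by applying the horizontal Leray projector, or equivalently by testing against horizontally divergence-free test functions independent of $z$: in the first equation the viscous term $\varepsilon^{\gamma-2}\partial_z^2 u_\varepsilon^h$ disappears in the limit (its $\tilde L^2(\mathcal{B}^{0,1/2})$ norm is $O(\varepsilon^{(\gamma-2)/2})$), while the second equation, multiplied by $\varepsilon^{-2}$ being impossible, is instead read as $\partial_z p_\varepsilon = -\varepsilon^2(\ldots)$ and therefore gives $\partial_z p=0$ at the limit.

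The main obstacle I anticipate is the nonlinear term, specifically the term $u_\varepsilon^v\partial_z u_\varepsilon^h$, since no horizontal smoothing acts on the $\partial_z$ direction and the only estimate on $\partial_z u_\varepsilon^h$ degenerates as $\varepsilon\to 0$ (it is multiplied by $\varepsilon^{(\gamma-2)/2}$). One thus cannot rely on diffusive compactness in $z$; instead, one must use the anisotropic Besov regularity of $u^h$ itself (which controls $\partial_z u^h$ in $\mathcal{B}^{0,-1/2}$) combined with the gain of vertical regularity for $u_\varepsilon^v$ coming from the integral formula. Once the limit is shown to be a solution of \eqref{Equations primitives} lying in $\tilde E$ (which follows from the weak-$*$ limits and the convergence of the initial data), Theorem~\ref{théorème unicité eq primitives} identifies it as the unique solution of Theorem~\ref{Caractère bien-posé équations primitives}, so the limit does not depend on the extracted subsequence and the whole family $(u_\varepsilon)_\varepsilon$ converges in $\mathcal{D}'$ to $u$.
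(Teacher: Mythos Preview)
Your overall strategy---uniform a priori bounds from \eqref{estimée a priori NS anisotrope}, time-compactness via Aubin--Lions on $\partial_t u_\varepsilon^h$, passing to the limit in the nonlinearity, and identification of the limit via Theorem~\ref{théorème unicité eq primitives}---is exactly the paper's approach, and the structure of your argument is correct.

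The one place where you diverge from the paper, and where your write-up is unnecessarily tentative, is the ``main obstacle'' paragraph on $u_\varepsilon^v\,\partial_z u_\varepsilon^h$. You correctly note that there is no uniform bound on $\partial_z u_\varepsilon^h$, but your proposed fix (anisotropic Besov duality between $\partial_z u^h\in\mathcal{B}^{0,-1/2}$ and the extra vertical regularity of $u_\varepsilon^v$) is vague and not needed. The paper simply writes the full convection term in divergence form,
\[
u_\varepsilon\cdot\nabla\begin{pmatrix}u_\varepsilon^h\\ \varepsilon u_\varepsilon^v\end{pmatrix}=\dive\Big(u_\varepsilon\otimes\begin{pmatrix}u_\varepsilon^h\\ \varepsilon u_\varepsilon^v\end{pmatrix}\Big),
\]
and then uses that $u_\varepsilon\rightharpoonup u$ weakly in $L^2_{\text{loc}}(L^2)$ (for $u_\varepsilon^v$ this is your integral formula, or equivalently the vertical Poincar\'e inequality $\|u_\varepsilon^v\|_{L^2}\le C\|\dive_h u_\varepsilon^h\|_{L^2}$) together with $(u_\varepsilon^h,\varepsilon u_\varepsilon^v)\to(u^h,0)$ strongly in $L^2_{\text{loc}}(H^\eta_{\text{loc}})$. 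Weak$\times$strong convergence in $L^2$ handles the tensor product directly, and the outer $\dive$ is harmless in $\mathcal{D}'$. No separate analysis of $\partial_z u_\varepsilon^h$ is required. This is the only substantive simplification the paper offers over your plan; otherwise the arguments coincide.
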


\subsection{Sketch of the proofs}
We divide the proof of these results into three parts: the well-posedness and uniqueness of the primitive equations \eqref{Equations primitives}, those of the anisotropic Navier-Stokes equations, and the convergence of solutions between the two systems.

For the first part, we begin by focusing on a priori estimates in the space \( \mathcal{B}^{0,1/2} \). The existence theorem then follows easily by applying Friedrichs' method, as presented in \cite{BCD}. To obtain these estimates, we first observe that the vertical mean value of \( u \) satisfies the 2D Navier-Stokes equation with three components, for which global strong solutions with large data in \( H^1 \) are known, satisfying the classical energy equality of Navier-Stokes. We are then left to study the system without the vertical mean value. The most delicate term to estimate is the convection \( u \cdot \nabla u^h \). For this, we provide in Appendix \ref{convection} several so-called "convection lemmas" to handle this term, particularly because in the 3D torus domain \( \Omega_1 \), one must also account for horizontal mean values. These lemmas are strongly inspired by those found in Paicu's works \cite{Paicu1} and \cite{Paicu2}. 

The key idea behind the proof of these convection lemmas is to decompose the convection term into two parts:
\[
u^h \cdot \nabla u^h \quad \text{("the horizontal part")},
\]
and
\[
u^v \partial_z u^h \quad \text{("the vertical part")}.
\]
The horizontal part poses no difficulty since the needed control is provided by horizontal viscosity. Because we work in anisotropic Besov spaces, we simply use Bony's vertical paraproduct decomposition (recalled in the appendix) and estimate the terms in a standard way. The vertical part requires more effort since we have no direct information on \( u^v \), due to the absence of vertical viscosity. However, from the incompressibility condition, we gain \( L^2 \)-in-time control of \( \partial_z u^v = -\operatorname{div}_h u^h \). The idea is then to integrate by parts in the term \( \Delta_q^v (u^v \partial_z u^h) \Delta_q^v u^h \) to rewrite it as \( \Delta_q^v (\partial_z u^v u^h) \Delta_q^v u^h \). To justify this rigorously, Paicu took advantage of the decomposition \eqref{décomposition Chemin} introduced by Chemin in \cite{Chemin}.

Once these lemmas are established, it is straightforward to complete the a priori estimates for the system.

The uniqueness part of Theorem~\ref{théorème unicité eq primitives} relies on a double-logarithmic estimate of the form:
\[
\phi'(t) \leq f(t) \phi(t)(-\ln \phi(t)) \ln(-\ln \phi(t)),
\]
where \( \phi \) is a norm of the difference between two solutions in a suitable Banach space, and \( f \) is a locally integrable function. Osgood's lemma then yields uniqueness.

The reason "Why is uniqueness not proved directly in \( \mathcal{B}^{0,1/2} \)?" has been clearly explained by Paicu in \cite{Paicu1}. In fact, if we take \( u_1 \) and \( u_2 \) in \( \tilde{E} \) (defined in \eqref{tilde E définition}) as two solutions to \eqref{Equations primitives}, and denote \( w = u_2 - u_1 \), the goal is to show that \( w = 0 \) in \( \mathcal{D}'(\mathbb{R}_+^* \times \Omega) \). The equation satisfied by \( w^h \) contains nonlinear terms such as \( u_2 \cdot \nabla w^h \) and \( w \cdot \nabla u_1^h \). The "horizontal" terms \( u_2^h \cdot \nabla_h w^h \) and \( w^h \cdot \nabla_h u_1^h \) can be estimated classically via Bony's vertical decomposition. The term \( u_2^v \partial_z w^h \) is treated similarly to \( u^v \partial_z u^h \) in the a priori estimates using decomposition \eqref{décomposition Chemin}. The main difficulty lies in estimating \( w^v \partial_z u_1^h \). Vertically, \( w^v \) belongs to \( B_{2,1}^{1/2} \) via the vertical Poincaré inequality of Lemma \ref{Poincaré vertical} and the oddness with respect to the vertical variable, while \( \partial_z u_1^h \in B_{2,1}^{-1/2} \). Thus, their product naturally lies in \( B_{2,\infty}^{-1/2} \), suggesting the anisotropic candidate space to be \( \mathcal{B}_{2,\infty}^{0,-1/2} \), with the natural norm:
\[
\|w\|_{\mathcal{B}_{2,\infty}^{0,-1/2}} := \sup_q \left( 2^{-q/2} \|\Delta_q^v w\|_{L^2} \right).
\]
As noted in \cite{Paicu1}, this term is hard to estimate: it requires separate estimates for high and low frequencies, with low frequencies being especially problematic. This leads to a loss in the estimates and necessitates using Osgood's Lemma.

We then turn to the proofs of Theorems~\ref{Caractère bien posé NS} and \ref{théorème unicité NS}. The study of well-posedness and uniqueness for the anisotropic Navier-Stokes equations \eqref{NS remise à l'échelle} is quite similar to that for the primitive equations \eqref{Equations primitives}. Since \( (u_\varepsilon^h, \varepsilon u_\varepsilon^v) \) satisfies the standard Navier-Stokes equation (with anisotropic vertical viscosity), we consider it as the unknown and apply classical energy estimates in \( \mathcal{B}^{0,1/2} \), reusing the convection lemmas established earlier. Theorems~\ref{Caractère bien posé NS} and \ref{théorème unicité NS} then follow easily.

Finally, we prove, by a compactness argument, the convergence of strong solutions to the anisotropic Navier-Stokes equations \eqref{NS remise à l'échelle} (obtained in Theorem~\ref{Caractère bien posé NS}) to the unique solution of the primitive equations \eqref{Equations primitives} (obtained in Theorem~\ref{Caractère bien-posé équations primitives}), concluding with Theorem~\ref{théorème de convergence}.

\section{Primitive Equations}

This section is dedicated to the proof of the global existence theorem for the primitive equations.

To do so, we decompose equation \eqref{Equations primitives} into two coupled systems.

We set \( u^h = \tilde{u}^h + \overline{u}^h \), where \( \overline{u}^h \) satisfies:
\begin{equation} \label{Navier-Stokes 2D}
\left\{
\begin{array}{l}
\partial_t \overline{u}^h + \overline{u}^h \cdot \nabla_h \overline{u}^h - \nu_h \Delta_h \overline{u}^h + \nabla_h \overline{p} = 0, \\
\operatorname{div}_h \overline{u}^h = 0, \\
\overline{u}^h(t=0) = \overline{u}_0^h,
\end{array}
\right.
\end{equation}

and \( \tilde{u}^h \) satisfies:
\begin{equation} \label{Equations primitives sans moyenne verticale}
\left\{
\begin{array}{l}
\partial_t \tilde{u}^h + \tilde{u} \cdot \nabla \tilde{u}^h + \overline{u}^h \cdot \nabla_h \tilde{u}^h + \tilde{u}^h \cdot \nabla_h \overline{u}^h - \nu_h \Delta_h \tilde{u}^h + \nabla_h \tilde{p} = 0, \\
\partial_z \tilde{p} = 0, \\
\operatorname{div} \tilde{u} = 0, \\
\tilde{u}^h(t=0) = \tilde{u}_0^h.
\end{array}
\right.
\end{equation}
Equation \eqref{Navier-Stokes 2D} is a 2D Navier-Stokes type equation with three components. It is known that it has a unique global solution \( \overline{u}^h \) (see for instance \cite{Gallagher}) in the functional space:
\[
\overline{u}^h \in \mathcal{C}_b(\mathbb{R}_+; L^2(\Omega_h)) \cap L^2(\mathbb{R}_+; \dot{H}^1(\Omega_h)).
\]

Moreover, this solution satisfies the energy equality:
\begin{equation} \label{estimée NS 2D}
\|\overline{u}^h(t)\|_{L^2(\Omega_h)}^2 + 2 \nu_h \int_0^t \|\nabla_h \overline{u}^h(\tau)\|_{L^2(\Omega_h)}^2 \, d\tau = \|\overline{u}_0^h\|_{L^2}^2.
\end{equation}

\subsection{A priori estimate for the primitive equations without vertical mean value}
We now prove the a priori estimates for the system \eqref{Equations primitives sans moyenne verticale}.  
To do this, assume that we have a sufficiently regular solution \( \tilde{u} \) on \( [0,T] \times \Omega \).

Apply the operator \( \Delta_q^v \) to the first equation of \eqref{Equations primitives sans moyenne verticale} and take the scalar product with \( \Delta_q^v \tilde{u}^h \). Integrating by parts, we obtain:
\[
\begin{aligned}
\frac{1}{2} \frac{d}{dt} \|\Delta_q^v \tilde{u}^h\|_{L^2}^2 + \nu_h \|\nabla_h \tilde{u}^h\|_{L^2}^2 & \leq |(\Delta_q^v(\tilde{u} \cdot \nabla \tilde{u}^h) \mid \Delta_q^v \tilde{u}^h)_{L^2}| \\
& \quad + |(\Delta_q^v(\tilde{u}^h \cdot \nabla_h \overline{u}^h) \mid \Delta_q^v \tilde{u}^h)_{L^2}|,
\end{aligned}
\]
since, from the second and third equations in \eqref{Equations primitives sans moyenne verticale}, we obtain via integration by parts:
\[
\begin{aligned}
(\nabla_h \Delta_q^v \tilde{p} \mid \Delta_q^v \tilde{u}^h)_{L^2} &= -(\Delta_q^v \tilde{p} \mid \Delta_q^v \operatorname{div}_h \tilde{u}^h)_{L^2} \\
&= (\Delta_q^v \tilde{p} \mid \Delta_q^v \partial_z \tilde{u}^v)_{L^2} = (\partial_z \Delta_q^v \tilde{p} \mid \Delta_q^v \tilde{u}^v)_{L^2} = 0.
\end{aligned}
\]
Also, the term \( |(\Delta_q^v(\overline{u}^h \cdot \nabla_h \tilde{u}^h) \mid \Delta_q^v \tilde{u}^h)_{L^2}| \) vanishes since \( \operatorname{div}_h \overline{u}^h = 0 \) and \( \partial_z \overline{u}^h = 0 \).

Integrating in time over \( [0,t] \subset [0,T] \), we get:
$$\displaylines{\|\Delta_q^v \tilde{u}^h(t)\|_{L^2}^2+2\nu_h\int_0^t\|\nabla_h \tilde{u}^h(\tau)\|_{L^2}^2d\tau  \leq 2\int_0^t|(\Delta_q^v(\tilde{u}\cdot \nabla\tilde{u}^h)|\Delta_q^v \tilde{u}^h)_{L^2}|d\tau \cr\hfill+2\int_0^t|(\Delta_q^v(\tilde{u}^h\cdot\nabla_h\overline{u}^h)|\Delta_q^v \tilde{u}^h)_{L^2}|d\tau +\|\Delta_q^v \tilde{u}_0^h\|_{L^2}^2.}$$

As a consequence of Lemma~\ref{Premier lemme de convection} (for \( \Omega_2 \)) or Lemma~\ref{lemme convection final} (for the 3D torus \( \Omega_1 \)), we have:
\begin{multline} \label{terme de convection 1}
\int_0^T |(\Delta_q^v(\tilde{u} \cdot \nabla \tilde{u}^h) \mid \Delta_q^v \tilde{u}^h)_{L^2}| \, d\tau \leq C 2^{-q} c_q \|\tilde{u}^h\|_{\tilde{L}_T^\infty(\mathcal{B}^{0,1/2})}  \\ \times \|\nabla_h \tilde{u}^h\|_{\tilde{L}_T^2(\mathcal{B}^{0,1/2})}^2.
\end{multline}

By Lemma~\ref{3eme terme de convection}, we also have inequality \eqref{terme de convection2}.

Summing inequalities \eqref{terme de convection 1} and \eqref{terme de convection2}, and using the inequality \( ab \leq a^2 + b^2 \) for \( a,b \in \mathbb{R} \), we obtain for any \( t \in [0,T] \):
$$\displaylines{\|\Delta_q^v \tilde{u}^h(t)\|_{L^2}^2+2\nu_h\int_0^t \|\nabla_h \Delta_q^v \tilde{u}^h(\tau)\|_{L^2}^2 d\tau \hfill\cr\leq \|\Delta_q^v \tilde{u}_0^h\|_{L^2}^2+ 2^{-q}C c_q \|\tilde{u}^h\|_{\tilde{L}_T^\infty(\mathcal{B}^{0,1/2})}\|\nabla_h \tilde{u}^h\|_{\tilde{L}_T^2(\mathcal{B}^{0,1/2})}^2 \hfill\cr\hfill+2^{-q}c_q \frac{C^2}{\nu_h} \|\tilde{u}^h\|_{\tilde{L}_T^\infty(\mathcal{B}^{0,1/2})}^2 \|\nabla_h \overline{u}^h\|_{L_T^2(L^2(\Omega_h))}^2 + \nu_h 2^{-q}c_q \|\nabla_h \tilde{u}^h\|_{\tilde{L}_T^2(\mathcal{B}^{0,1/2})}^2.}$$

Taking the square root of the previous inequality, then taking the supremum over \( t \in [0,T] \), and summing over \( q \), we obtain:
\begin{multline} \label{étape de calcul}
\|\tilde{u}^h\|_{\tilde{L}_T^\infty(\mathcal{B}^{0,1/2})} + \sqrt{\nu_h} \|\nabla_h \tilde{u}^h\|_{\tilde{L}_T^2(\mathcal{B}^{0,1/2})} \\
\leq \|\tilde{u}_0^h\|_{\mathcal{B}^{0,1/2}} + C \|\tilde{u}^h\|_{\tilde{L}_T^\infty(\mathcal{B}^{0,1/2})}^{1/2} \|\nabla_h \tilde{u}^h\|_{\tilde{L}_T^2(\mathcal{B}^{0,1/2})} \\
+ \frac{C}{\sqrt{\nu_h}} \|\tilde{u}^h\|_{\tilde{L}_T^\infty(\mathcal{B}^{0,1/2})} \|\nabla_h \overline{u}^h\|_{L_T^2(L^2(\Omega_h))}.
\end{multline}

Now define the following field:
\begin{equation} \label{champ lambda}
\tilde{u}_\lambda(t,x) := e^{-\lambda \int_0^t \|\nabla_h \overline{u}^h(\tau)\|_{L^2(\Omega_h)}^2 \, d\tau} \tilde{u}(t,x),
\end{equation}
where \( \lambda > 0 \) is a parameter to be chosen later.

In particular, \( \tilde{u}_\lambda \) satisfies the equation:
\begin{multline} \label{Equations primitives sans moyenne verticale avec lambda}
\partial_t \tilde{u}_\lambda^h + \tilde{u} \cdot \nabla \tilde{u}_\lambda^h + \overline{u}^h \cdot \nabla_h \tilde{u}_\lambda^h + \tilde{u}_\lambda^h \cdot \nabla_h \overline{u}^h - \nu_h \Delta_h \tilde{u}_\lambda^h + \nabla_h \tilde{p}_\lambda \\
= -\lambda \|\nabla_h \overline{u}^h(t)\|_{L^2(\Omega_h)}^2 \tilde{u}_\lambda^h,
\end{multline}
where
\[
\tilde{p}_\lambda(t,x) := e^{-\lambda \int_0^t \|\nabla_h \overline{u}^h\|_{L^2(\Omega_h)}^2 \, d\tau} \tilde{p}(t,x).
\]

Repeating the previous estimates with \( \tilde{u}_\lambda^h \), we obtain:
\begin{multline} \label{estimée tilde u lambda}
\|\tilde{u}_\lambda^h\|_{\tilde{L}_T^\infty(\mathcal{B}^{0,1/2})} + \sqrt{\nu_h} \|\nabla_h \tilde{u}_\lambda^h\|_{\tilde{L}_T^2(\mathcal{B}^{0,1/2})} \\
\leq \|\tilde{u}_0^h\|_{\mathcal{B}^{0,1/2}} + C \|\tilde{u}^h\|_{\tilde{L}_T^\infty(\mathcal{B}^{0,1/2})}^{1/2} \|\nabla_h \tilde{u}_\lambda^h\|_{\tilde{L}_T^2(\mathcal{B}^{0,1/2})} \\
+ \left( \frac{C}{\sqrt{\nu_h}} - \sqrt{\lambda} \right) \|\tilde{u}_\lambda^h\|_{\tilde{L}_T^\infty(\mathcal{B}^{0,1/2})} \|\nabla_h \overline{u}^h\|_{L_T^2(L^2(\Omega_h))}.
\end{multline}

Let us now make the following smallness assumption:
\begin{equation} \label{hypothèse de petitesse2}
\|\tilde{u}^h\|_{\tilde{L}_T^\infty(\mathcal{B}^{0,1/2})} \leq c \nu_h \quad \text{with} \quad C c^{1/2} < \frac{1}{2\sqrt{2}}.
\end{equation}

Then the previous inequality gives:
$$\displaylines{\|\tilde{u}_\lambda^h\|_{\tilde{L}_T^\infty(\mathcal{B}^{0,1/2})} + \frac{\sqrt{\nu_h}}{2} \|\nabla_h \tilde{u}_\lambda^h\|_{\tilde{L}_T^2(\mathcal{B}^{0,1/2})}
\hfill\cr\hfill \leq \|\tilde{u}_0^h\|_{\mathcal{B}^{0,1/2}}+ \left( \frac{C}{\sqrt{\nu_h}} - \sqrt{\lambda} \right) \|\tilde{u}_\lambda^h\|_{\tilde{L}_T^\infty(\mathcal{B}^{0,1/2})} \|\nabla_h \overline{u}^h\|_{L_T^2(L^2(\Omega_h))}.}$$

Choosing \( \lambda \) large enough, e.g. \( \sqrt{\lambda} = \frac{2C}{\sqrt{\nu_h}} \), we get:
\[
\|\tilde{u}_\lambda^h\|_{\tilde{L}_T^\infty(\mathcal{B}^{0,1/2})} \leq \|\tilde{u}_0^h\|_{\mathcal{B}^{0,1/2}}.
\]

Using the definition \eqref{champ lambda}, the energy identity \eqref{estimée NS 2D}, and the smallness condition \eqref{hypothèse de petitesse}, we then obtain:
\[
\begin{aligned}
\|\tilde{u}^h\|_{\tilde{L}_T^\infty(\mathcal{B}^{0,1/2})}
& \leq \|\tilde{u}_\lambda^h\|_{\tilde{L}_T^\infty(\mathcal{B}^{0,1/2})} \exp\left( \frac{4C^2}{\nu_h} \|\nabla_h \overline{u}^h\|_{L_T^2(L^2(\Omega_h))}^2 \right) \\
& \leq \|\tilde{u}_0^h\|_{\mathcal{B}^{0,1/2}} \exp\left( \frac{2\sqrt{2} C^2}{\nu_h^{3/2}} \|\overline{u}_0^h\|_{L^2(\Omega_h)} \right) \\
& < c \nu_h.
\end{aligned}
\]

Let \( T^* \) be the maximal time such that \( \|\tilde{u}^h\|_{\tilde{L}_T^\infty(\mathcal{B}^{0,1/2})} \leq 2c\nu_h \) for all \( 0 < T < T^* \).  
By repeating the above estimates with this relaxed smallness assumption, and since \( t \mapsto \|\tilde{u}^h\|_{\tilde{L}_T^\infty(\mathcal{B}^{0,1/2})} \) is continuous, we conclude that \( T^* = +\infty \).  
Hence, we obtain the following a priori estimate:
\begin{equation} \label{estimée a priori}
\|\tilde{u}^h\|_{\tilde{L}^\infty(\mathcal{B}^{0,1/2})} + \frac{\sqrt{\nu_h}}{2} \|\nabla_h \tilde{u}^h\|_{\tilde{L}^2(\mathcal{B}^{0,1/2})} \leq c \nu_h.
\end{equation}

\subsection{Formal expression of the pressure}
We now prove the following lemma regarding the pressure:

\begin{lemma}\label{pression}
The pressure satisfies:
\begin{equation}\label{pression pour équations primitives}
\Delta p = -\frac{1}{2} \int_{-1}^1 \operatorname{div}_h \left[ \tilde{u} \cdot \nabla \tilde{u}^h + \overline{u}^h \cdot \nabla_h \tilde{u}^h + \tilde{u}^h \cdot \nabla_h \overline{u}^h \right] \, dz'.
\end{equation}
Hence, up to a constant, the pressure has the formal expression:
\[
p = Q(\tilde{u}^h, \tilde{u}^h) + Q(\overline{u}^h, \tilde{u}^h) + Q(\tilde{u}^h, \overline{u}^h),
\]
where \( Q(u, v) := \frac{1}{2} \int_{-1}^1 (-\Delta)^{-1} \operatorname{div} \operatorname{div}_h (u \otimes v) \, dz'. \)
\end{lemma}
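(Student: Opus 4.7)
The plan is to turn the first equation of \eqref{Equations primitives sans moyenne verticale} into a Poisson equation for $\tilde p$ by taking the horizontal divergence, using incompressibility to trade divergence of $\tilde u^h$ for a vertical derivative of $\tilde u^v$, and then integrating out the vertical variable.

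\textbf{Step 1. Horizontal divergence of the momentum equation.} Applying $\dive_h$ to the first line of \eqref{Equations primitives sans moyenne verticale} and using $\partial_z \tilde p = 0$, so that $\dive_h \nabla_h \tilde p = \Delta_h \tilde p = \Delta \tilde p$, one gets
\[
\partial_t \dive_h \tilde u^h + \dive_h\!\big(\tilde u\cdot\nabla \tilde u^h + \overline u^h\cdot\nabla_h \tilde u^h + \tilde u^h\cdot\nabla_h \overline u^h\big) - \nu_h\Delta_h \dive_h \tilde u^h + \Delta \tilde p = 0 .
\]
Substituting $\dive_h \tilde u^h = -\partial_z \tilde u^v$ (from $\dive \tilde u = 0$) transforms the first and fourth terms into pure $z$-derivatives:
\[
-\partial_z\partial_t \tilde u^v + \nu_h\partial_z\Delta_h \tilde u^v + \Delta \tilde p = -\dive_h\!\big(\tilde u\cdot\nabla \tilde u^h + \overline u^h\cdot\nabla_h \tilde u^h + \tilde u^h\cdot\nabla_h \overline u^h\big).
\]

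\textbf{Step 2. Vertical integration.} I integrate the above identity in $z$ over $[-1,1]$. The two $z$-derivative terms on the left are boundary values of $\partial_t \tilde u^v$ and $\Delta_h \tilde u^v$ at $z = \pm 1$, both of which vanish, since $\tilde u^v = 0$ on $\{z = \pm 1\}$ by the parity condition and the periodic boundary conditions. Since $\tilde p$ does not depend on $z$, $\int_{-1}^1 \Delta \tilde p\,dz = 2\Delta \tilde p$. Dividing by $2$ gives exactly \eqref{pression pour équations primitives}.

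\textbf{Step 3. Rewriting in divergence form.} To go from \eqref{pression pour équations primitives} to the formal expression $p = Q(\tilde u^h,\tilde u^h)+Q(\overline u^h,\tilde u^h)+Q(\tilde u^h,\overline u^h)$, I rewrite each convection term in divergence form. Using $\dive \tilde u = 0$, one has $\tilde u\cdot\nabla \tilde u^h_i = \dive(\tilde u\,\tilde u^h_i)$, and similarly $\dive_h \overline u^h = 0$ together with $\partial_z \overline u^h = 0$ give $\overline u^h\cdot\nabla_h \tilde u^h_i = \dive_h(\overline u^h\,\tilde u^h_i)$ and $\tilde u^h\cdot\nabla_h \overline u^h_i = \dive(\tilde u\,\overline u^h_i)$. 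Each term then takes the schematic form $\dive\dive_h(u\otimes v)$, and inverting $-\Delta$ (which agrees with $-\Delta_h$ on $z$-independent functions, up to harmonic constants) produces the announced formula up to an additive constant in $x_h$.

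The only genuine obstacle is justifying the boundary-term cancellation in Step 2: it relies crucially on the condition $\tilde u^v = 0$ at $z = \pm 1$ derived from the parity of $\tilde u^v$ and the vertical periodicity, and this is the one ingredient one must explicitly invoke. The rest reduces to careful bookkeeping of which terms are already in divergence form and which require incompressibility of $\tilde u$ or of $\overline u^h$ to be recast as such.
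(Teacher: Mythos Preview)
Your proof is correct and follows essentially the same route as the paper's. The only cosmetic difference is the order of operations and the justification for killing the boundary contributions: the paper writes the boundary difference $[\partial_t\tilde u^v-\Delta\tilde u^v]_{z=-1}^{z=1}$ and invokes vertical periodicity to set it to zero, whereas you use the slightly stronger fact $\tilde u^v(\cdot,\pm1)=0$ (oddness plus periodicity) to annihilate each endpoint separately; both are valid and the computations are otherwise identical.
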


\begin{proof}
Using the periodicity in the vertical variable, the incompressibility condition on \( u \), and the first equation of \eqref{Equations primitives}, we compute:
\begin{align*}
0 &= \partial_t \tilde{u}^v(x,y,1) - \Delta \tilde{u}^v(x,y,1) - \left[ \partial_t \tilde{u}^v(x,y,-1) - \Delta \tilde{u}^v(x,y,-1) \right] \\
&= \int_{-1}^1 \left( \partial_t \partial_z \tilde{u}^v - \Delta \partial_z \tilde{u}^v \right) \, dz' \\
&= - \int_{-1}^1 \operatorname{div}_h \left( \partial_t \tilde{u}^h - \Delta \tilde{u}^h \right) \, dz' \\
&= \int_{-1}^1 \operatorname{div}_h \left( \nabla_h p + \tilde{u} \cdot \nabla \tilde{u}^h + \overline{u}^h \cdot \nabla_h \tilde{u}^h + \tilde{u}^h \cdot \nabla_h \overline{u}^h \right) \, dz'.
\end{align*}

Thus, we obtain:
\[
\int_{-1}^1 \operatorname{div}_h (\nabla_h p) \, dz' = - \int_{-1}^1 \operatorname{div}_h \left( \tilde{u} \cdot \nabla \tilde{u}^h + \overline{u}^h \cdot \nabla_h \tilde{u}^h + \tilde{u}^h \cdot \nabla_h \overline{u}^h \right) \, dz'.
\]

Since \( \partial_z p = 0 \), it follows that:
\[
2\Delta p = - \int_{-1}^1 \operatorname{div}_h \left( \tilde{u} \cdot \nabla \tilde{u}^h + \overline{u}^h \cdot \nabla_h \tilde{u}^h + \tilde{u}^h \cdot \nabla_h \overline{u}^h \right) \, dz'.
\]

This proves the lemma.
\end{proof}

\subsection{Existence theorem for the system without vertical mean value}
To prove the existence of solutions to the system \eqref{Equations primitives}, we use Friedrichs' method (see Chapter 4 of \cite{BCD}).

We define the following spectral truncation operator:
\[
J_k u := \sum_{|m| \leq k} \mathcal{F}_h^{-1} \left( \mathbf{1}_{k^{-1} \leq |\xi_h| \leq k} \mathcal{F}_h u(\xi_h) \right)(x_h) \cdot \widehat{u}_m e^{i\pi m x_v},
\]
where \( \widehat{u}_m := \frac{1}{2} \int_{-1}^1 e^{-i\pi m x_v} u(x_h, x_v) \, dx_v \), and \( \mathcal{F}_h \) denotes the horizontal Fourier transform over \( \Omega_h \).

We consider the following approximate system:
\[
\left\{
\begin{array}{l}
\partial_t \tilde{u}_k^h + J_k(J_k \tilde{u}_k \cdot \nabla J_k \tilde{u}_k^h)
+ J_k(J_k \overline{u}^h \cdot \nabla_h J_k \tilde{u}_k^h)
+ J_k(J_k \tilde{u}_k^h \cdot \nabla_h J_k \overline{u}^h) \\
\quad - \Delta_h J_k \tilde{u}_k^h = J_k Q(J_k \tilde{u}_k^h, J_k \tilde{u}_k^h)
+ J_k Q(J_k \overline{u}^h, J_k \tilde{u}_k^h)
+ J_k Q(J_k \tilde{u}_k^h, J_k \overline{u}^h), \\
\operatorname{div} \tilde{u}_k = 0, \\
\tilde{u}_k(t=0) = J_k \tilde{u}_0,
\end{array}
\right.
\]
where \( \tilde{u}_k = (\tilde{u}_k^h, \tilde{u}_k^v) \) and the vertical component is given by the formal expression:
\[
\tilde{u}_k^v := -\int_{-1}^z \operatorname{div}_h(\tilde{u}_k^h) \, dz',
\]
which follows from \( \operatorname{div}_h \tilde{u}_k^h + \partial_z \tilde{u}_k^v = 0 \) and the oddness condition on \( \tilde{u}_k^v \). The operator \( Q \) is defined as in Lemma~\ref{pression}.

\begin{itemize}
\item[$\bullet$] By the Cauchy–Lipschitz theorem, this spectrally truncated system admits a unique maximal solution \( \tilde{u}_k^h \in \mathcal{C}^1([0, T_k[; L^2) \) with initial data \( J_k \tilde{u}_0^h \) for all \( k \in \mathbb{N} \).

\item[$\bullet$] Since \( J_k^2 = J_k \), it follows that \( J_k \tilde{u}_k^h \) also solves the system. By uniqueness, \( J_k \tilde{u}_k^h = \tilde{u}_k^h \), and thus \( \tilde{u}_k^h \) satisfies the system:
\[
\begin{aligned}
\partial_t \tilde{u}_k^h &+ J_k(\tilde{u}_k \cdot \nabla \tilde{u}_k^h)
+ J_k(J_k \overline{u}^h \cdot \nabla_h \tilde{u}_k^h)
+ J_k(\tilde{u}_k^h \cdot \nabla_h J_k \overline{u}^h) - \Delta \tilde{u}_k^h \\
&= J_k Q(\tilde{u}_k^h, \tilde{u}_k^h)
+ J_k Q(J_k \overline{u}^h, \tilde{u}_k^h)
+ J_k Q(\tilde{u}_k^h, J_k \overline{u}^h),
\end{aligned}
\]
with initial data \( J_k \tilde{u}_0^h \).

\item[$\bullet$] Performing the \( L^2 \) energy estimate and noting that the pressure terms vanish by integration by parts, and that
\[
(J_k(\tilde{u}_k \cdot \nabla \tilde{u}_k^h) \mid \tilde{u}_k^h)_{L^2} = 0, \quad
(J_k(\overline{u}_k^h \cdot \nabla_h \tilde{u}_k^h) \mid \tilde{u}_k^h)_{L^2} = 0,
\]
we obtain:
\[\displaylines{
\frac{1}{2} \frac{d}{dt} \|\tilde{u}_k^h(t)\|_{L^2}^2 + \|\nabla_h \tilde{u}_k^h(t)\|_{L^2}^2
\leq \|\nabla_h J_k \overline{u}^h\|_{L^\infty} \|\tilde{u}_k^h\|_{L^2}^2
\hfill\cr\hfill\leq C 2^{2k} \|\overline{u}_0\|_{L^2(\Omega_h)} \|\tilde{u}_k^h\|_{L^2}^2.}
\]

By Grönwall's lemma, \( \tilde{u}_k^h(t) \) remains locally bounded on the maximal interval of existence \( [0, T_k[ \), hence \( T_k = +\infty \) by continuation argument.
\end{itemize}

Using the a priori estimate \eqref{estimée a priori}, we deduce that the sequence of global solutions \( (\tilde{u}_k^h) \) is uniformly bounded in the space:
\[
\tilde{u}_k^h \in \tilde{L}^\infty(\mathbb{R}_+; \mathcal{B}^{0,1/2}), \quad
\nabla_h \tilde{u}_k^h \in \tilde{L}^2(\mathbb{R}_+; \mathcal{B}^{0,1/2}).
\]
In particular, \( (\tilde{u}_k^h)_{k \in \mathbb{N}} \) is bounded in \( L^\infty(\mathbb{R}_+; L^2) \), and from the equation satisfied by \( \tilde{u}_k^h \), we get that \( (\partial_t \tilde{u}_k^h) \) is bounded in \( L^\infty(\mathbb{R}_+; H^{-N}) \) for sufficiently large \( N \).

By Ascoli's theorem (up to extraction), \( \tilde{u}_k^h \to \tilde{u}^h \) in \( L^\infty_{\text{loc}}(\mathbb{R}_+; H_{\text{loc}}^{-N}) \). Since \( \tilde{u}_k^h \) is bounded in \( L^\infty(\mathbb{R}_+; L^2) \), interpolation gives:
\[
\tilde{u}_k^h \to \tilde{u}^h \quad \text{in } L^\infty_{\text{loc}}(\mathbb{R}_+; H_{\text{loc}}^{-s}) \quad \text{for all } s > 0.
\]

Since \( \tilde{u}_k^h \) is bounded in \( L^2_{\text{loc}}(H^\varepsilon) \) for all \( \varepsilon < \tfrac{1}{2} \), classical product laws in Sobolev spaces yield:
\[
\tilde{u}_k \otimes \tilde{u}_k^h \to \tilde{u} \otimes \tilde{u}^h \quad \text{in } L^2_{\text{loc}}(H_{\text{loc}}^{\varepsilon - \sigma - 3/2}),
\]
whenever \( \sigma < \varepsilon \). In particular, this convergence holds in the sense of distributions. Passing to the limit in the approximate system yields that \( \tilde{u} \) is a solution of \eqref{Equations primitives}.

Time continuity of the solution follows by standard arguments — see \cite{Paicu1} for further details.

\subsection{Uniqueness Lemma}

In this section, we aim to establish the following lemma, whose proof is strongly inspired by \cite{Paicu1}.

\begin{lemma}\label{lemme d'unicité}
Let \( u_1 \) and \( u_2 \) be two divergence-free vector fields such that \( u_1^h, u_2^h \in L_T^\infty(H^{0,1/2}) \), and \( \nabla_h u_1^h, \nabla_h u_2^h \in L_T^2(H^{0,1/2}) \). Let \( w \) be a divergence-free vector field such that \( w^h \in L_T^\infty(H^{0,1/2}) \) and \( \nabla_h w^h \in L_T^2(H^{0,1/2}) \), satisfying the equation:
\[
\left\{
\begin{array}{l}
\partial_t w^h + u_1 \cdot \nabla w^h + w \cdot \nabla u_2^h - \Delta_h w^h = -\nabla_h p, \\
\partial_z p = 0, \\
\operatorname{div} w = 0.
\end{array}
\right.
\]

Then, for every \( 0 < t < T \), the following estimate holds:
$$\displaylines{\|w^h(t)\|_{H^{0,-1/2}}^2 \leq \|w_0^h\|_{H^{0,-1/2}}^2+ \int_0^t C f(\tau)  \|w^h(t)  \|_{H^{0,-1/2}}^2 \hfill\cr\hfill \times \ln\left(1 + e + \frac{1}{\|w^h\|_{H^{0,1/2}}^2}\right) 
\ln\left( \ln\left(1 + e + \frac{1}{\|w^h\|_{H^{0,1/2}}^2}\right)\right)d\tau,}$$
where \( f \) is a locally integrable function in time given by:
\begin{equation}\label{fonction unicité}
\begin{aligned}
f(t) := \left(1 + \|u_1^h(t)\|_{H^{0,1/2}}^2 + \|u_2^h(t)\|_{H^{0,1/2}}^2 + \|w^h(t)\|_{H^{0,1/2}}^2 \right) \\
\times \left(1 + \|\nabla_h u_1^h(t)\|_{H^{0,1/2}}^2 + \|\nabla_h u_2^h(t)\|_{H^{0,1/2}}^2 + \|\nabla_h w^h(t)\|_{H^{0,1/2}}^2 \right).
\end{aligned}
\end{equation}
\end{lemma}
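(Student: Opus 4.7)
The plan is to apply the vertical dyadic block $\Delta_q^v$ to the equation satisfied by $w^h$, take the $L^2$ scalar product with $\Delta_q^v w^h$, multiply by $2^{-q}$, and sum over $q \geq -1$ so as to produce an energy estimate in $H^{0,-1/2}$. The pressure contribution vanishes exactly as in the a priori estimate for \eqref{Equations primitives sans moyenne verticale}, since $\partial_z p = 0$ and $\operatorname{div} w = 0$. The horizontal viscosity yields the favourable dissipation term $\|\nabla_h w^h\|_{H^{0,-1/2}}^2$ on the left-hand side, which we intend to exploit by Young's inequality to absorb high-frequency contributions from the nonlinear terms. What remains is to control, integrated in time, the two nonlinearities $(\Delta_q^v(u_1 \cdot \nabla w^h) \mid \Delta_q^v w^h)_{L^2}$ and $(\Delta_q^v(w \cdot \nabla u_2^h) \mid \Delta_q^v w^h)_{L^2}$.

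\textbf{The easier nonlinearities.} The term $u_1 \cdot \nabla w^h$ is treated via the convection lemmas of Appendix~\ref{convection}, applied at negative regularity: the horizontal part $u_1^h \cdot \nabla_h w^h$ is handled by a vertical Bony decomposition, while $u_1^v \partial_z w^h$ is rewritten using Chemin's decomposition \eqref{décomposition Chemin} and an integration by parts turning $\partial_z u_1^v$ into $-\operatorname{div}_h u_1^h$, which is controlled by $\nabla_h u_1^h$ in $L^2_T(H^{0,1/2})$. Similarly, the horizontal part $w^h \cdot \nabla_h u_2^h$ of the second nonlinearity admits a standard paradifferential estimate producing a bound of the form $f(t) \|w^h\|_{H^{0,-1/2}} \|\nabla_h w^h\|_{H^{0,-1/2}}$, which after Young's inequality leaves $f(t) \|w^h\|_{H^{0,-1/2}}^2$ and is absorbed into the integrand of the target inequality.

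\textbf{The critical term $w^v \partial_z u_2^h$.} This is the main obstacle, as announced in the introduction. Using $\operatorname{div} w = 0$ and the odd-parity condition on $w^v$, one writes $w^v(x_h,z) = -\int_{-1}^z \operatorname{div}_h w^h(x_h,z')\,dz'$; the vertical Poincaré inequality of Lemma~\ref{Poincaré vertical} then gives $w^v$ one extra vertical derivative with $L^2$ control by $\nabla_h w^h$. Combined with $\partial_z u_2^h$, which sits in vertical regularity $H^{0,-1/2}$, the product $w^v \partial_z u_2^h$ naturally lands in the space $\mathcal{B}^{0,-1/2}_{2,\infty}$, one notch weaker than the $H^{0,-1/2}$ sum-norm we are trying to estimate. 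Splitting this product by a vertical Bony decomposition into two paraproducts and a remainder, the bound at dyadic level $q$ is controlled only in $\ell^\infty_q$, and this is precisely the mismatch that prevents a clean closed estimate.

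\textbf{Logarithmic interpolation, time integration, and Osgood.} To recover a summable-in-$q$ estimate from the $\ell^\infty_q$ bound on $w^v \partial_z u_2^h$, one interpolates the $\mathcal{B}^{0,-1/2}_{2,\infty}$ norm against the stronger $H^{0,1/2}$ norm by splitting $\sum_q = \sum_{q \leq N} + \sum_{q > N}$ and optimising the cut-off $N$; this produces a factor $\ln\bigl(1+e+1/\|w^h\|_{H^{0,1/2}}^2\bigr)$. Applying the same procedure a second time to the auxiliary sum that appears in the low-frequency paraproduct term generates the iterated factor $\ln \ln(\cdots)$. Gathering everything, absorbing the dissipation with Young's inequality, and writing the result in differential form, we obtain
\[
\frac{d}{dt}\|w^h\|_{H^{0,-1/2}}^2 \leq C f(t)\,\|w^h\|_{H^{0,-1/2}}^2 \, \ln\!\Bigl(1+e+\tfrac{1}{\|w^h\|_{H^{0,1/2}}^2}\Bigr) \ln\!\ln\!\Bigl(1+e+\tfrac{1}{\|w^h\|_{H^{0,1/2}}^2}\Bigr),
\]
with $f$ as in \eqref{fonction unicité}, locally integrable thanks to the hypotheses on $u_1, u_2, w$. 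Time integration yields the claimed inequality, which is exactly in the Osgood modulus-of-continuity shape required to propagate $w_0^h = 0$ into $w^h \equiv 0$ in the uniqueness proof of Theorem~\ref{théorème unicité eq primitives}. The principal difficulty throughout is the analysis of $w^v \partial_z u_2^h$: its natural regularity scale is weaker than the target by a logarithmic amount, and the double logarithm is precisely the unavoidable price of reconciling the two.
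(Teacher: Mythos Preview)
Your outline follows the paper's strategy: energy estimate in $H^{0,-1/2}$, decomposition of each nonlinearity into horizontal and vertical parts, identification of $w^v\partial_z u_2^h$ as critical, and a frequency-cutoff argument to extract the Osgood modulus. The paper organizes this into four sub-lemmas, each citing the corresponding computation in \cite{Paicu1}.

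There are, however, two inaccuracies. First, the ``easier'' terms are not as easy as you claim. You invoke the convection lemmas of Appendix~\ref{convection} to say that $u_1^h\cdot\nabla_h w^h$ and $w^h\cdot\nabla_h u_2^h$ produce clean bounds of the shape $f(t)\|w^h\|_{H^{0,-1/2}}\|\nabla_h w^h\|_{H^{0,-1/2}}$; but those lemmas are in the $\ell^1$-space $\mathcal{B}^{0,1/2}$, whereas here you work in the $\ell^2$-space $H^{0,\pm 1/2}$, and this passage is precisely where a logarithmic loss enters. In the paper (Lemma~\ref{Premier sous-lemme d'unicité}), the horizontal term $u_1^h\cdot\nabla_h w^h$ already carries the full double-logarithm factor, and $w^h\cdot\nabla_h u_2^h$ carries a single one; so $w^v\partial_z u_2^h$ is not the sole source of the $\ln\ln$. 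Second, your description of the mechanism is off: the paper does not split the outer $q$-sum. In the paraproduct $\tilde T_1^v$ it is the \emph{inner} sum $S_{q'-1}^v w^v=\Delta_{-1}^v w^v+\sum_{0\le j\le N}\Delta_j^v w^v+\sum_{j>N}\Delta_j^v w^v$ that is split, and the middle range is estimated via an $\varepsilon$-dependent Gagliardo--Nirenberg inequality with the choices $N=-\ln\|w^h\|_{H^{0,-1/2}}^2$ and $\varepsilon=1/(1+\ln N)$; the first logarithm comes from the Cauchy--Schwarz bound $\sum_{0\le j\le N}\lesssim\sqrt{N}$, the second from the $1/\sqrt{\varepsilon}$ in the Sobolev constant. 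A naive ``interpolate $\mathcal{B}^{0,-1/2}_{2,\infty}$ against $H^{0,1/2}$ by splitting $\sum_q$'' gives only one logarithm.
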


The proof is nearly identical to the one given by Paicu in \cite{Paicu1} for \( \Omega_2 \) and \cite{Paicu2} for \( \Omega_1 \). We provide here the main steps of the argument in the case \( \Omega_2 \), emphasizing the slight differences from \cite{Paicu1}. For a full proof, we refer to Lemma 2.3.12 in \cite{Thèse}.

\begin{proof}
By taking the energy estimate on the equation satisfied by \( w^h \) using the norm \eqref{norme sobolev2}, we have:
$$\displaylines{\frac{1}{2}\frac{d}{dt}\|w^h(t)\|_{H^{0,-1/2}}^2+\nu_h\|\nabla_h w^h(t)\|_{H^{0,-1/2}}^2 \leq \sum_{q\geq -1}2^{-q}\int \Delta_q^v (u_1\cdot\nabla w^h)\Delta_q^v w^h dx \hfill\cr\hfill +\sum_{q\geq -1}2^{-q}\int \Delta_q^v(w\cdot\nabla u_2^h)\Delta_q^v w^h dx.}$$

To handle the horizontal terms, i.e., those involving only derivatives in horizontal variables, we use the following lemma:

\begin{lemma}\label{Premier sous-lemme d'unicité}
Let \( a: \Omega \to \mathbb{R}^2 \) and \( b: \Omega\to \R \) such that \( b \in L_T^\infty(H^{0,1/2}) \), and \( \nabla_h a, \nabla_h b \in L_T^2(H^{0,1/2}) \). Then, there exists a constant \( C \) such that for all \( t \in (0,T) \),
$$\displaylines{\sum_{q\geq -1}2^{-q}\int \Delta_q^v(a\cdot\nabla_h b)\Delta_q^v b \ dx\leq Cg(t)(1-\ln \|b\|_{H^{0,-1/2}}^2)\ln(1-\ln(\|b\|_{H^{0,-1/2}}^2))\hfill\cr\hfill+\frac{\nu_h}{20}\|\nabla_h b\|_{H^{0,-1/2}}^2,}$$ where $$g(t)\mathrel{\mathop:}=(1+\|a(t)\|_{H^{0,1/2}}^2+\|b(t)\|_{H^{0,1/2}}^2)(1+\|\nabla_h a(t)\|_{H^{0,1/2}}^2 +\|\nabla_h b(t)\|_{H^{0,1/2}}^2).$$
\end{lemma}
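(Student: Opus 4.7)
The proof follows step by step Paicu's argument in \cite{Paicu1}, adapted to the vertical parity conditions and to the horizontal domain $\Omega_h$; here I only sketch the main lines.

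The first step is Bony's vertical paraproduct decomposition
$$ a \cdot \nabla_h b \;=\; T^v_a\, \nabla_h b \;+\; T^v_{\nabla_h b}\, a \;+\; R^v(a,\nabla_h b),$$
which reduces the estimate of $\sum_{q\ge -1}2^{-q}\int \Delta_q^v(a\cdot\nabla_h b)\,\Delta_q^v b\,dx$ to three analogous series, one per piece. For each of them, I plan to use the almost-orthogonality of the vertical Littlewood–Paley blocks (so that only the indices $|q-q'|\le 4$ or $q'\ge q-4$ contribute), then Hölder's inequality to separate the horizontal and vertical variables, the vertical Bernstein estimate $\|\Delta_q^v f\|_{L^\infty_v(L^2_h)}\lesssim 2^{q/2}\|\Delta_q^v f\|_{L^2(\Omega)}$, and the $2$D horizontal embedding $\|g\|_{L^4(\Omega_h)}\le C\|g\|_{L^2(\Omega_h)}^{1/2}\|\nabla_h g\|_{L^2(\Omega_h)}^{1/2}$ in order to balance the horizontal derivative. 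A discrete Young inequality in $q$, together with the conventions on the sequences $(c_q),(d_q)$ and with the interpolation
$$\|f\|_{H^{0,0}}^2\;\le\;\|f\|_{H^{0,-1/2}}\,\|f\|_{H^{0,1/2}},$$
then brings the right-hand side to products of $H^{0,s}$ norms of $a,b,\nabla_h a,\nabla_h b$ with $s\in\{-\tfrac{1}{2},0,\tfrac{1}{2}\}$.

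The main obstacle is producing the double logarithm $(1-\ln\|b\|_{H^{0,-1/2}}^2)\ln(1-\ln\|b\|_{H^{0,-1/2}}^2)$ rather than a single one. Following Paicu's strategy, I will cut the dyadic sum at a threshold $N\in\mathbb{N}$: for $q\le N$ the bound is controlled by the $H^{0,-1/2}$ norms of $a$ and $b$ at the cost of a factor linear in $N$, while for $q>N$ it is controlled by the $H^{0,1/2}$ norms together with a decay $2^{-N/2}$. A first optimization of $N$ in terms of $\log(\|b\|_{H^{0,1/2}}^2/\|b\|_{H^{0,-1/2}}^2)$ generates the first logarithm; a second, nested optimization inside the residual logarithmic factor produces the $\ln\ln$ structure. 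This is the technical heart of the proof and the reason why uniqueness is not established directly in $\mathcal{B}^{0,1/2}$.

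To conclude, whenever a residual term proportional to $\|\nabla_h b\|_{H^{0,-1/2}}^2$ survives these optimizations, it is absorbed by Young's inequality $ab\le a^2+b^2$ into the viscous contribution $\frac{\nu_h}{20}\|\nabla_h b\|_{H^{0,-1/2}}^2$ on the right-hand side, and all the remaining factors of $a,b,\nabla_h a,\nabla_h b$ in $H^{0,1/2}$ regroup into the function $g(t)$ defined in \eqref{fonction unicité}. Summing the contributions of the three pieces of Bony's decomposition yields the announced bound.
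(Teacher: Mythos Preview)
Your sketch is along the right lines and matches the paper's own treatment, which simply refers to Paicu \cite{Paicu1}, pages 215--220. The overall architecture you describe (vertical Bony decomposition, quasi-orthogonality, Bernstein and horizontal Gagliardo--Nirenberg, absorption of the residual $\|\nabla_h b\|_{H^{0,-1/2}}^2$ into the viscosity) is correct.

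There is, however, a genuine imprecision in your account of how the double logarithm arises. You describe it as ``cut the dyadic sum at $N$, optimize $N$ once, then optimize again inside the residual''. That is not the mechanism in Paicu, and a bare frequency cutoff with two optimizations of the same parameter would only produce a single logarithm. The actual trick, visible in the paper's detailed proof of the companion Lemma~\ref{Quatrième sous-lemme d'unicité}, is the following: after splitting the low-frequency operator $S_{q'-1}^v$ at level $N$ (so the split is in the inner index $j$, not in the outer $q$), one applies H\"older in the horizontal variable with a \emph{variable} exponent $L_h^{2/\varepsilon}$ on the middle piece $0\le j\le N$. The Gagliardo--Nirenberg/Sobolev constant then blows up like $1/\sqrt{\varepsilon}$ (cf.\ Lemma~\ref{lemme Sobolev}), and one obtains a prefactor of order $(N/\varepsilon)^{1/(1-\varepsilon)}$. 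The first optimization is over $\varepsilon$, with $\varepsilon=\tfrac{1}{1+\ln N}$, which converts that prefactor into $C N\ln N$; the second is $N=-\ln\|b\|_{H^{0,-1/2}}^2$, which turns $N\ln N$ into $(1-\ln\|b\|^2)\ln(1-\ln\|b\|^2)$. Without the extra $\varepsilon$-parameter in the H\"older step, the $\ln\ln$ cannot appear. If you revise your sketch to include this point explicitly, it will be complete.
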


The proof can be found between pages 215 and 220 of \cite{Paicu1}, replacing \( u_h \) by \( a \) and \( w \) by \( b \).

In our case, choosing \( a = u_1^h \) and \( b = w^h \) (componentwise), we deduce:
\begin{multline}\label{premier terme unicité}
\sum_{q \geq -1} 2^{-q} \int \Delta_q^v(u_1^h \cdot \nabla_h w^h) \Delta_q^v w^h \, dx \\
\leq C g(t) \|w^h\|_{H^{0,-1/2}}^2 \left(1 - \ln \|w^h\|_{H^{0,-1/2}}^2 \right) \left(1 + \ln(1 - \ln \|w^h\|_{H^{0,-1/2}}^2)\right) \\
+ \frac{\nu_h}{20} \|\nabla_h w^h\|_{H^{0,-1/2}}^2.
\end{multline}

Let us now estimate the other horizontal terms that contain only horizontal derivatives.

\begin{lemma}\label{Deuxième sous-lemme d'unicité}
Let \( a:\Omega\to \R^2 \) and \( b:\Omega\to \R^2 \) belong to the space \( L_T^\infty(H^{0,1/2}) \), and assume that \( \nabla_h a \) and \( \nabla_h b \) belong to \( L_T^2(H^{0,1/2}) \). Then there exists a constant \( C \) such that for all \( t\in ]0,T[ \), we have:
$$\displaylines{\sum_{q\geq -1}2^{-q}\int \Delta_q^v(b \cdot\nabla_h a)\Delta_q^v b dx \\ \leq Cg(t)(1-\ln \|w^h\|_{H^{0,-1/2}}^2)\|w^h\|_{H^{0,-1/2}}^2 \hfill\cr\hfill +\frac{\nu_h}{20}\|\nabla_h w^h\|_{H^{0,-1/2}}^2,}$$ where $$g(t)\mathrel{\mathop:}=(1+\|a(t)\|_{H^{0,1/2}}^2+\|b(t)\|_{H^{0,1/2}}^2)(1+\|\nabla_h a(t)\|_{H^{0,1/2}}^2 +\|\nabla_h b(t)\|_{H^{0,1/2}}^2).$$
\end{lemma}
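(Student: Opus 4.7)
The plan is to adapt the argument of Lemma~\ref{Premier sous-lemme d'unicité} to the new nonlinearity $b\cdot\nabla_h a$, in which the horizontal derivative now falls on the high-regularity factor $a$ while $b$ appears twice (as coefficient and as test function). First I would apply Bony's vertical paraproduct decomposition
$$b\cdot\nabla_h a = T^v_b(\nabla_h a) + T^v_{\nabla_h a}(b) + R^v(b,\nabla_h a),$$
localize by applying $\Delta_q^v$, pair against $\Delta_q^v b$ in $L^2(\Omega)$, multiply by $2^{-q}$, and sum over $q\ge -1$. Using the standard vertical support properties of the paraproduct and remainder blocks, each of the three resulting contributions reduces to a triple-sum over dyadic vertical frequencies that is estimated piece by piece.

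For each piece, I would bound the horizontal integrals via Hölder's inequality combined with the two-dimensional Brezis--Gallouet logarithmic embedding
$$\|f\|_{L^\infty(\Omega_h)}^2 \leq C\,\|f\|_{H^1(\Omega_h)}^2\,\ln\!\left(e+\frac{\|f\|_{H^{s}(\Omega_h)}}{\|f\|_{H^1(\Omega_h)}}\right),\qquad s>1,$$
which is precisely the mechanism producing the logarithmic factor in the final bound. Vertically I would rely on the embedding $H^{0,1/2}\hookrightarrow L^\infty_v(L^2_h)$ together with vertical Bernstein inequalities to redistribute the weight $2^{-q}$ between the two occurrences of $b$. The paraproduct $T^v_{\nabla_h a}(b)$ is the most delicate piece, since the low vertical frequencies of $\nabla_h a$ must be placed in $L^\infty$ horizontally; the companion paraproduct $T^v_b(\nabla_h a)$ and the remainder $R^v(b,\nabla_h a)$ are strictly more benign and admit more direct Cauchy--Schwarz estimates on the dyadic sums.

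The main obstacle will be extracting exactly \emph{one} logarithmic factor $(1-\ln\|b\|_{H^{0,-1/2}}^2)$, rather than the double logarithm produced in Lemma~\ref{Premier sous-lemme d'unicité}. This improvement hinges on the fact that the $L^\infty_h$ estimate is now applied to derivatives of the ``good'' factor $a$ (which is controlled in $H^{0,1/2}$ with $\nabla_h a\in L^2(H^{0,1/2})$) rather than of the ``bad'' factor $b$ (which is only controlled in $H^{0,-1/2}$). Concretely, for each fixed $q$ I would split the inner paraproduct sum at a threshold $N=N(t)$ chosen a posteriori in terms of $\|b\|_{H^{0,-1/2}}$ and $\|b\|_{H^{0,1/2}}$: low vertical frequencies yield the single logarithmic loss coming from Brezis--Gallouet, while high vertical frequencies are absorbed into the dissipation $\tfrac{\nu_h}{20}\|\nabla_h b\|_{H^{0,-1/2}}^2$ via Young's inequality. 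Collecting the three contributions then yields the claimed estimate; as in \cite{Paicu1}, the argument is essentially bookkeeping once the correct paraproduct decomposition and the Brezis--Gallouet ingredient have been identified.
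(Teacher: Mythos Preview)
Your overall scaffolding---vertical Bony decomposition, Hölder in the horizontal variable, a frequency cutoff $N$ optimized against $\|b\|_{H^{0,-1/2}}$, and Young's inequality to absorb into the dissipation---matches the route taken in \cite{Paicu1} (which the paper simply cites for this lemma). However, the central analytic ingredient you invoke is wrong, and this is a genuine gap, not a cosmetic one.

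You propose to obtain the logarithmic loss via the Brezis--Gallouet inequality
\[
\|f\|_{L^\infty(\Omega_h)}^2 \leq C\,\|f\|_{H^1(\Omega_h)}^2\,\ln\Bigl(e+\tfrac{\|f\|_{H^{s}(\Omega_h)}}{\|f\|_{H^1(\Omega_h)}}\Bigr),\qquad s>1,
\]
applied horizontally to (derivatives of) the ``good'' factor $a$. But the hypotheses give only $a\in H^{0,1/2}$ and $\nabla_h a\in L^2(H^{0,1/2})$; in the horizontal variable this is at best $H^1_h$ control, and there is \emph{no} $H^s_h$ bound with $s>1$ available on either $a$ or $\nabla_h a$ (nor on $b$). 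The Brezis--Gallouet inequality is therefore vacuous here, and no step of the proof can place a factor in $L^\infty_h$ by this mechanism. The logarithm in the conclusion does not come from a horizontal endpoint Sobolev embedding at all.

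What actually produces the single logarithm (see the detailed proof of Lemma~\ref{Quatrième sous-lemme d'unicité} in the paper, which follows the same template) is the horizontal Gagliardo--Nirenberg inequality with sharp constant in the exponent, i.e.\ Lemma~\ref{lemme Sobolev}:
\[
\|u\|_{L^{2/\varepsilon}(\Omega_h)}\leq \tfrac{C}{\sqrt{\varepsilon}}\bigl(\|u\|_{L^2}^{\varepsilon}\|\nabla_h u\|_{L^2}^{1-\varepsilon}+\|u\|_{L^2}\bigr),
\]
combined with a vertical frequency cutoff at level $N$. One optimizes $\varepsilon\sim (1+\ln N)^{-1}$ and then $N\sim -\ln\|b\|_{H^{0,-1/2}}^2$; the factor $1/\sqrt{\varepsilon}$ is what turns into the logarithm. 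No $L^\infty_h$ norm is ever used. Incidentally, the delicate paraproduct here is $T^v_b(\nabla_h a)$ (low vertical frequencies of the \emph{bad} factor $b$), not $T^v_{\nabla_h a}(b)$ as you state: it is the sum $\sum_{j\leq N}\|\Delta_j^v b\|$ that costs $\sqrt{N}$ and forces the cutoff, whereas $S^v_{q'-1}(\nabla_h a)$ is directly controlled in $L^\infty_v(L^2_h)$ by $\|\nabla_h a\|_{H^{0,1/2}}$.
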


The proof of this lemma is given between pages 220 and 224 in \cite{Paicu1}, replacing \( v \) by \( a \) and \( w^h \) by \( b \).

Taking \( a = u_2^h \) and \( b = w^h \), we obtain:
\begin{multline}\label{deuxième terme unicité}
\sum_{q\geq -1}2^{-q}\sum_{|q-q'|\leq 4}\int \Delta_q^v(w^h \cdot\nabla_h u_2^h)\Delta_q^v w^h\, dx \\
\leq C f(t)(1-\ln \|w^h\|_{H^{0,-1/2}}^2)\|w^h\|_{H^{0,-1/2}}^2 + \frac{\nu_h}{20}\|\nabla_h w^h\|_{H^{0,-1/2}}^2.
\end{multline}

We next study the terms involving vertical derivatives. We will transform vertical derivatives into horizontal derivatives using the relation \( \partial_z u^v = -\dive_h u^h \). We must estimate the following two terms:
$$I = \sum_{q\geq -1}2^{-q}\int \Delta_q^v(u_1^v \partial_z w^h)\Delta_q^v w^h\, dx,$$ and $$\tilde{I} = \sum_{q\geq -1}2^{-q}\int \Delta_q^v(w^v \partial_z u_2^h)\Delta_q^v w^h\, dx.$$

To estimate the first term, we use the following lemma:

\begin{lemma}\label{Troisième sous-lemme d'unicité}
Let \( a \) be a three-dimensional divergence-free vector field and \( b \) a scalar function in \( L_T^\infty(H^{0,1/2}) \), such that \( \nabla_h a \) and \( \nabla_h b \) belong to \( L_T^2(H^{0,1/2}) \). Then there exists a constant \( C \) such that for all \( t \in (0,T) \),
\[
\sum_{q \geq -1} 2^{-q} \int \Delta_q^v(a^v \partial_z b) \Delta_q^v b \, dx \leq C g(t)(1 - \ln \|b\|_{H^{0,-1/2}}^2) + \frac{2\nu_h}{25} \|\nabla_h b\|_{H^{0,-1/2}}^2,
\]
where
\[
g(t) := (1 + \|a^h(t)\|_{H^{0,1/2}}^2 + \|b(t)\|_{H^{0,1/2}}^2)(1 + \|\nabla_h a^h(t)\|_{H^{0,1/2}}^2 + \|\nabla_h b(t)\|_{H^{0,1/2}}^2).
\]
\end{lemma}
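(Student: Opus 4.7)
The plan is to adapt Paicu's argument from \cite{Paicu1} for the term $u^v\partial_z w^h$ by substituting the triple $(u^h,u^v,w^h)$ with $(a^h,a^v,b)$. The only structural hypothesis Paicu uses on the ``transport'' field is that it is divergence-free, so this substitution should go through essentially line by line, except that we must pay attention to whether $b$ is scalar (it is) so that no additional algebraic cancellation is lost.

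The central obstruction is that we have no direct control on $a^v$, and we cannot absorb a full vertical derivative on $b$ because only $\nabla_h b$ appears on the right-hand side of the desired estimate. The remedy, following Chemin's decomposition \eqref{décomposition Chemin}, is to trade the vertical derivative $\partial_z$ for a horizontal divergence via the incompressibility relation $\partial_z a^v = -\dive_h a^h$. Concretely I would decompose
\[
a^v\partial_z b = T^v_{a^v}(\partial_z b) + T^v_{\partial_z b}(a^v) + R^v(a^v,\partial_z b),
\]
and then rewrite each dyadic piece so that every remaining vertical derivative acts on $a^v$ rather than on $b$. Since $\Delta^v_q$ and $S^v_{q-1}$ commute with $\partial_z$, the model manipulation reads
\[
S^v_{q'-1}(\partial_z b)\,\Delta^v_{q'} a^v
= \partial_z\bigl(S^v_{q'-1} b\cdot\Delta^v_{q'} a^v\bigr) - S^v_{q'-1} b\cdot\Delta^v_{q'}(\dive_h a^h),
\]
and the analogous identity for $T^v_{a^v}(\partial_z b)$ and the remainder $R^v$. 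The total-derivative parts integrate by parts cleanly onto $\Delta_q^v b$ (and partially cancel using periodicity/parity), while the surviving pieces involve only horizontal derivatives of $a^h$ or $b$, i.e.\ precisely the objects controlled by $g(t)$ and by $\nabla_h b$.

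Once every contribution has been put in this form, I would apply standard dyadic estimates together with the vertical product laws in $H^{0,1/2}$ recalled in the appendix. Each resulting term is bounded by one of the norms entering $g(t)$, multiplied by $\|b\|_{H^{0,-1/2}}$-type quantities and by a horizontal gradient factor, and Young's inequality then allows one to transfer the horizontal-gradient factor into the absorbable piece $\tfrac{2\nu_h}{25}\|\nabla_h b\|_{H^{0,-1/2}}^2$. The hard step, and the source of the logarithmic factor $(1-\ln \|b\|_{H^{0,-1/2}}^2)$, will be treating the low vertical frequencies of $b$: they cannot be controlled by $\|b\|_{H^{0,-1/2}}$ alone, so one has to split the sum in $q$ at some threshold $N$ and interpolate between $H^{0,-1/2}$ and $H^{0,1/2}$; optimizing $N$ in terms of $\|b\|_{H^{0,-1/2}}$ produces the single logarithm. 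This is precisely the computation Paicu performs in \cite{Paicu1}, so apart from bookkeeping there is no new difficulty beyond checking that every estimate used there needs only $a^h$, $\nabla_h a^h$, $b$, $\nabla_h b$ — which is indeed the case because all remaining vertical derivatives have been eliminated in the previous step.
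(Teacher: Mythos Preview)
Your proposal correctly identifies that the argument is Paicu's with the substitution $(u_1,w^h)\mapsto(a,b)$, and that the divergence-free condition is the only structural hypothesis needed. However, the concrete mechanism you describe is not Chemin's decomposition \eqref{décomposition Chemin}, and your product-rule manoeuvre does not produce the cancellation you claim.

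The problem is in the paraproduct $T^v_{a^v}(\partial_z b)$. Your identity $S^v_{q'-1}a^v\cdot\Delta^v_{q'}\partial_z b = \partial_z(S^v_{q'-1}a^v\cdot\Delta^v_{q'}b)+S^v_{q'-1}(\dive_h a^h)\cdot\Delta^v_{q'}b$ is correct, but after applying $\Delta_q^v$ and testing against $\Delta_q^v b$, integrating the total-derivative piece by parts gives $-\int \Delta_q^v(S^v_{q'-1}a^v\cdot\Delta^v_{q'}b)\,\partial_z\Delta_q^v b$: the vertical derivative has simply landed back on $\Delta_q^v b$, and you are now forced to estimate $S^v_{q'-1}a^v$ directly, which is exactly what you were trying to avoid (and which fails at the block $j=-1$ without an oddness hypothesis that the lemma does not assume). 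Chemin's decomposition is designed precisely to bypass this: the term $A_1=(S_{q-1}^v a^v)\,\partial_z\Delta_q^v b$ carries \emph{no outer} $\Delta_q^v$, so when paired with $\Delta_q^v b$ one has the perfect square $\partial_z\Delta_q^v b\cdot\Delta_q^v b=\tfrac12\partial_z(\Delta_q^v b)^2$, and integration by parts hits $a^v$ rather than $b$, yielding $\tfrac12\int S_{q-1}^v(\dive_h a^h)(\Delta_q^v b)^2$. The remaining pieces $A_2,A_3,A_4$ are then handled by the commutator estimate of Lemma~\ref{estimée de commutateur} and Bernstein, exactly as in the proof of Lemma~\ref{Premier lemme de convection}. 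Your Bony-plus-product-rule route can be pushed through, but only after reintroducing this same commutator extraction, so it is strictly longer.

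A smaller point: the threshold-splitting in $N$ that you describe is the mechanism for the \emph{fourth} sub-lemma (the term $b^v\partial_z a$), which yields the double logarithm. For the present term the single logarithm arises more simply, from the $\varepsilon$-dependent constant in the anisotropic Gagliardo--Nirenberg inequality (Lemma~\ref{lemme Sobolev}) after optimising in $\varepsilon$; no frequency cutoff at level $N$ is needed here.
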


The proof of this lemma is given between pages 225 and 226 of \cite{Paicu1}, replacing \( u_1 \) with \( a \) and \( w^h \) with \( b \).

Taking \( a = u_1 \), \( b = w^h \), we get:
\begin{multline}\label{troisième terme unicité}
I := \sum_{q \geq -1} 2^{-q} \int \Delta_q^v(u_1^v \partial_z w^h) \Delta_q^v w^h \, dx \\
\leq C(1 - \ln \|w^h(t)\|_{H^{0,-1/2}}^2) f(t) \|w^h(t)\|_{H^{0,-1/2}}^2 + \frac{2\nu_h}{25} \|\nabla_h w^h(t)\|_{H^{0,-1/2}}^2.
\end{multline}

Finally, we estimate the term
\[
\tilde{I} := \sum_{q \geq -1} 2^{-q} \int \Delta_q^v(w^v \partial_z u_2^h) \Delta_q^v w^h \, dx.
\]

\begin{lemma}\label{Quatrième sous-lemme d'unicité}
Let \( a: \Omega \to \mathbb{R}_h^2 \) and \( b \) be a three-dimensional divergence-free vector field such that \( b \in L_T^\infty(H^{0,1/2}) \), and \( \nabla_h a \), \( \nabla_h b \in L_T^2(H^{0,1/2}) \). Then there exists a constant \( C \) such that on $[0,T]$, we have a.e.
$$\displaylines{\sum_{q\geq -1}2^{-q}\int \Delta_q^v(b^v \partial_z a)\Delta_q^v b^h dx \hfill\cr\hfill \leq Cg(1-\ln \|w^h\|_{H^{0,-1/2}}^2)\ln(1-\ln \|w^h\|_{H^{0,-1/2}}^2)\|w^h\|_{H^{0,-1/2}}^2 \hfill\cr\hfill +\frac{3\nu_h}{25}\|\nabla_h w^h\|_{H^{0,-1/2}}^2,}$$ where $$g(t)\mathrel{\mathop:}=(1+\|a(t)\|_{H^{0,1/2}}^2+\|b^h(t)\|_{H^{0,1/2}}^2)(1+\|\nabla_h a(t)\|_{H^{0,1/2}}^2 +\|\nabla_h b^h(t)\|_{H^{0,1/2}}^2).$$
\end{lemma}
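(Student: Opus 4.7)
The plan is to follow closely the strategy developed by Paicu in \cite{Paicu1} (pp. 226--232), adapted to the notations $a$ and $b$. Apply Bony's vertical paraproduct decomposition to the product $b^v \partial_z a$:
\[
b^v \partial_z a = T^v_{b^v}(\partial_z a) + T^v_{\partial_z a}(b^v) + R^v(b^v, \partial_z a),
\]
and split the term to be estimated into three contributions accordingly. For the paraproduct $T^v_{b^v}(\partial_z a)$, the second factor is vertically localized at frequency $\sim 2^{q'}$ with $q'\sim q$, and the identity $\|\Delta_{q'}^v \partial_z a\|_{L^2} \sim 2^{q'}\|\Delta_{q'}^v a\|_{L^2}$ converts a $\|\partial_z a\|_{H^{0,-1/2}}$ weight into $\|a\|_{H^{0,1/2}}$, which we control by hypothesis. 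Combined with an $L^\infty_z$ bound on $b^v$ obtained from the Chemin-type decomposition $b^v(\cdot,z) = -\int_{-1}^z \operatorname{div}_h b^h\,dz'$ (which uses the oddness of $b^v$ together with the vertical Poincaré inequality), this contribution leads to a bound of the claimed form after a Young inequality that absorbs a small fraction of $\nu_h\|\nabla_h b^h\|_{H^{0,-1/2}}^2$.

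The main difficulty, already announced in the discussion preceding the lemma, is the second paraproduct $T^v_{\partial_z a}(b^v)$: here $b^v$ sits at vertical frequency $\sim 2^q$ while $\partial_z a$ appears as a low-frequency factor with no direct $L^\infty_z$ bound. The idea is to split the sum over vertical frequencies at a threshold $N=N(t)$ to be optimized, estimating the high-frequency part ($q'>N$) via $\|w^h\|_{H^{0,1/2}}$ and absorbing it into the viscous term, while the low-frequency part ($q'\le N$) is bounded via $\|w^h\|_{H^{0,-1/2}}$. Optimizing $N$ as $\log_2\bigl(1+\|w^h\|_{H^{0,-1/2}}^{-1}\bigr)$ produces the first logarithmic loss $\bigl(1-\ln\|w^h\|_{H^{0,-1/2}}^2\bigr)$. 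The second logarithmic factor $\ln\bigl(1-\ln\|w^h\|_{H^{0,-1/2}}^2\bigr)$ arises from trading an $\ell^1_{q'}$ summation against an $\ell^\infty_{q'}$ estimate on the low-frequency block, producing a final logarithm over the range $q'\le N$. This double-log loss is precisely what forces the uniqueness argument through Osgood's lemma rather than a direct Grönwall estimate.

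The remainder $R^v(b^v, \partial_z a)$ is handled analogously, with a symmetric role for $b^v$ and $\partial_z a$ and no new difficulty. Summing the three contributions against the weight $2^{-q}$ over $q\ge -1$ reassembles everything into the norms of the $H^{0,\pm 1/2}$ scale defined in \eqref{norme sobolev2}, the divergence-free condition $\partial_z b^v = -\operatorname{div}_h b^h$ letting us reexpress quantities involving $b^v$ in terms of $\|b^h\|_{H^{0,1/2}}$ and $\|\nabla_h b^h\|_{H^{0,1/2}}$, which is how the function $g$ in the statement appears. Finally, the Young inequalities are arranged so that the viscous contributions add up to at most $\frac{3\nu_h}{25}\|\nabla_h w^h\|_{H^{0,-1/2}}^2$, yielding the stated estimate. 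The only genuine obstacle is the optimization step in the $T^v_{\partial_z a}(b^v)$ piece; the rest is a bookkeeping adaptation of \cite{Paicu1} to the present setting.
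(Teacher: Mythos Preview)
Your Bony decomposition and overall structure are right, but you have swapped the roles of the two paraproducts. In the paper (and in Paicu's original), the term that forces the $N$--splitting and produces the double logarithm is $T^v_{b^v}(\partial_z a)$, i.e.\ $\tilde T_1^v=\sum_q 2^{-q}\sum_{|q-q'|\le 4}\int \Delta_q^v(S_{q'-1}^v b^v\,\partial_z\Delta_{q'}^v a)\,\Delta_q^v b^h\,dx$, not $T^v_{\partial_z a}(b^v)$. Your proposed shortcut for $\tilde T_1^v$---bounding $\|S_{q'-1}^v b^v\|_{L^\infty_v L^2_h}$ globally via $b^v=-\int_{-1}^z\operatorname{div}_h b^h$ and closing with Young---does not work: that bound gives $\|b^v\|_{L^\infty_v L^2_h}\le C\|\nabla_h b^h\|_{H^{0,1/2}}$, and after Gagliardo--Nirenberg on the other two factors and a Young inequality you are left with a term of size $f(t)\|b^h\|_{H^{0,-1/2}}^{2/3}$ rather than $f(t)\|b^h\|_{H^{0,-1/2}}^2$. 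Since $\int_0 r^{-1/3}\,dr<\infty$, this destroys the Osgood structure and the uniqueness argument collapses.

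The correct treatment of $\tilde T_1^v$ is to expand $S_{q'-1}^v b^v=\sum_{j\le q'-2}\Delta_j^v b^v$ and split the $j$--sum into three ranges: $j=-1$ (handled by the vertical Poincar\'e inequality, giving $\|b^v\|_{H^{0,-1/2}}\le C\|\nabla_h b^h\|_{H^{0,-1/2}}$), $0\le j\le N$ (where Cauchy--Schwarz in $j$ produces the factor $\sqrt{N}\,\|\nabla_h b^h\|_{H^{0,-1/2}}$), and $j>N$ (where one pays $\|\nabla_h b^h\|_{H^{0,1/2}}$ but gains $2^{-N}$). The second logarithm does \emph{not} come from an $\ell^1/\ell^\infty$ trade as you suggest; it comes from using the $\varepsilon$--dependent Gagliardo--Nirenberg inequality $\|u\|_{L_h^{2/\varepsilon}}\le C\varepsilon^{-1/2}\|u\|_{L^2}^\varepsilon\|\nabla_h u\|_{L^2}^{1-\varepsilon}$ on the middle piece, which after Young leaves a factor $(N/\varepsilon)^{1/(1-\varepsilon)}$. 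Choosing $\varepsilon=1/(1+\ln N)$ and then $N=-\ln\|b^h\|_{H^{0,-1/2}}^2$ is what yields the product $(1-\ln\|b^h\|^2)\ln(1-\ln\|b^h\|^2)$. The terms $\tilde T_2^v$ and $\tilde R^v$ are handled by essentially the same machinery (see \cite{Paicu1}, pp.~227--231); the point specific to this paper is that the lowest block $\tilde T_{1,0}^v$ uses the oddness of $b^v$ through the vertical Poincar\'e inequality of Lemma~\ref{Poincaré vertical}.
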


\begin{proof}
We set the time $t\in [0,T]$ once and for all, and it will not be mentioned again hereafter for the sake of simplicity.

We use Bony's decomposition with respect to the vertical variable (see Appendix~\ref{Théorie de Littlewood-Paley anisotrope}), which gives:
\[
\tilde{I} = \tilde{T}_1^v + \tilde{T}_2^v + \tilde{R}^v,
\]
where
\[
\begin{aligned}
\tilde{T}_1^v &= \sum_{q \geq -1} 2^{-q} \sum_{|q - q'| \leq 4} \int \Delta_q^v(S_{q'-1}^v b^v \partial_z \Delta_{q'}^v a) \Delta_q^v b^h \, dx, \\
\tilde{T}_2^v &= \sum_{q \geq -1} 2^{-q} \sum_{|q - q'| \leq 4} \int \Delta_q^v(S_{q'-1}^v \partial_z a \cdot \Delta_{q'}^v b^v) \Delta_q^v b^h \, dx, \\
\tilde{R}^v &= \sum_{q \geq -1} 2^{-q} \sum_{q' \geq q - 1} \int \Delta_q^v(\Delta_{q'}^v b^v \cdot \Delta_{q'}^v \partial_z a) \Delta_q^v b^h \, dx.
\end{aligned}
\]

The proof is essentially identical to that in \cite{Paicu1}, pages 227–231, with \( a \) replaced by \( v \) and \( b \) by \( w^h \).

For the term \(\tilde{T}_1^v\), we have
$$\displaylines{\tilde{T}_1^v=\sum_{q\geq -1}2^{-q}\sum_{|q-q'|\leq 4}\sum_{1\leq j\leq q'}\int \Delta_q^v(\Delta_j^v b^v \partial_z \Delta_{q'}^v a)\Delta_q^v b^hdx \hfill\cr\hfill \leq\sum_{q\geq -1}2^{-q}\sum_{|q-q'|\leq 4}\sum_{j}\int |\Delta_q^v(\Delta_j^v b^v \partial_z \Delta_{q'}^v a)\Delta_q^v b^h|dx.}$$

We have the following inequality on the term \(\tilde{T}_1^v\):
\[
\tilde{T}_1^v \leq \tilde{T}_{1,0}^v + \tilde{T}_{1,N}^v + \tilde{T}_1^{v,N},
\]
where $N$ (depending on $t$) will be defined later and
\[
\begin{cases}
\displaystyle \tilde{T}_{1,0}^v := \sum_q 2^{-q} \sum_{|q - q'| \leq 4} \int \left| S_0^v b^v \, \partial_z \Delta_{q'}^v a \, \Delta_q^v b^h \right| dx, \\
\displaystyle \tilde{T}_{1,N}^v := \sum_q 2^{-q} \sum_{|q - q'| \leq 4} \sum_{0 \leq j \leq N} \int \left| \Delta_j^v b^v \, \partial_z \Delta_{q'}^v a \, \Delta_q^v b^h \right| dx, \\
\displaystyle \tilde{T}_1^{v,N} := \sum_q 2^{-q} \sum_{|q - q'| \leq 4} \sum_{j \geq N} \int \left| \Delta_j^v b^v \, \partial_z \Delta_{q'}^v a \, \Delta_q^v b^h \right| dx.
\end{cases}
\]

Compared to \cite{Paicu1}, the only difference will lie in the estimate of \(\tilde{T}_{1,0}^v\). Let us then estimate this term.

By noticing, thanks to Bernstein's lemma \ref{Bernstein et Gagliardo-Nirenberg}, that
\[
\|S_0^v b^v\|_{L_v^\infty L_h^2} = \|\Delta_{-1}^v b^v\|_{L_v^\infty L_h^2} \leq C \|\Delta_{-1}^v b^v\|_{L^2} \leq C \|b^v\|_{H^{0,-1/2}},
\]
and
\[
\|\partial_z \Delta_{q'}^v a\|_{L_v^2 L_h^4} \leq C 2^{q'} \|\Delta_{q'}^v a\|_{L_v^2 L_h^4},
\]
we then have by the Gagliardo-Nirenberg inequality \eqref{inégalité de Gagliardo-Nirenberg généralisée}:
\begin{align*}
\tilde{T}_{1,0}^v &\leq \sum_q 2^{-q} \sum_{|q - q'| \leq 4} \|S_0^v b^v\|_{L_v^\infty L_h^2} \|\partial_z \Delta_{q'}^v a\|_{L_v^2 L_h^4} \|\Delta_q^v b^h\|_{L_v^2 L_h^4} \\
&\leq \sum_q 2^{-q} \sum_{|q - q'| \leq 4} C 2^{q'} \|b^v\|_{H^{0,-1/2}} \|\Delta_{q'}^v a\|_{L^2}^{1/2} \|\nabla_h \Delta_{q'}^v a\|_{L^2}^{1/2} \\
&\quad \times \big( \|\Delta_q^v b^h\|_{L^2}^{1/2} \|\nabla_h \Delta_q^v b^h\|_{L^2}^{1/2} + \|\Delta_q^v b^h\|_{L^2} \big).
\end{align*}

Since \(b^v\) is odd with respect to the vertical variable, by the "vertical" Poincaré-Wirtinger lemma \ref{Poincaré vertical} and the divergence-free condition, we have
$$\displaylines{\|b^v\|_{H^{0,-1/2}}=\sqrt{\sum_{q\geq -1}(2^{q/2}\|\Delta_q^v b^v\|_{L^2})^2}\leq C \sqrt{\sum_{q\geq -1}(2^{q/2}\|\Delta_q^v \partial_z b^v\|_{L^2})^2} \hfill\cr\hfill \leq C \|\partial_z b^v\|_{H^{0,-1/2}} \leq C \|\nabla_h b^h\|_{H^{0,-1/2}}.}$$

Taking into account that \(u\) belongs to \(H^{0,1/2}\) and that \(w\) is estimated in \(H^{0,-1/2}\), we obtain
\begin{align*}
\tilde{T}_{1,0}^v &\leq C \|a\|_{H^{0,1/2}}^{1/2} \|\nabla_h a\|_{H^{0,1/2}}^{1/2} \|\nabla_h b^h\|_{H^{0,-1/2}} \big( \|b^h\|_{H^{0,-1/2}}^{1/2} \|\nabla_h b^h\|_{H^{0,-1/2}}^{1/2} \\
&\quad + \|b^h\|_{H^{0,-1/2}} \big) \\
&\leq C \big( \|a\|_{H^{0,1/2}}^{2} \|\nabla_h a\|_{H^{0,-1/2}}^{2} + f(t) \big) \|b^h\|_{H^{0,-1/2}}^{2} + \frac{\nu_h}{50} \|\nabla_h b^h\|_{H^{0,-1/2}}^{2}.
\end{align*}

Hence we obtain
\begin{equation}\label{ineq_tilde_t_1_0}
\tilde{T}_{1,0}^v \leq C f(t) \|b^h\|_{H^{0,-1/2}}^{2} + \frac{\nu_h}{50} \|\nabla_h b^h\|_{H^{0,-1/2}}^{2}.
\end{equation}

We now estimate $\tilde{T}_{1,N}^v$. Let $\varepsilon<1/2$ be a parameter that will be chosen at the end. 
\begin{align*}
\tilde{T}_{1,N}^v & \leq \sum_{q\geq -1}\sum_{|q-q'|\leq 4} 2^{-q}\sum_{0\leq j\leq N}\|\Delta_q^v(\Delta_j^v b^v \partial_z \Delta_{q'}^v a)\|_{L_v^2 L_h^{\frac{2}{1+\varepsilon}}} \|\Delta_q^v b^h\|_{L_v^2 L_h^{\frac{2}{1-\varepsilon}}} \\
& \leq \sum_{q\geq -1}\sum_{|q-q'|\leq 4}2^{-q}\sum_{0\leq j \leq N} \|\Delta_j^v b^v\|_{L_v^\infty L_h^2}\|\partial_z \Delta_{q'}^v a\|_{L_v^2 L_h^{2/\varepsilon}}\|\Delta_q^v b^h\|_{L_v^2 L_h^{\frac{2}{1-\varepsilon}}}.
\end{align*}

Using Lemma \ref{Inégalité Gagliardo-Nirenberg avec mooyenne}, we obtain
$$
\displaylines{
\tilde{T}_{1,N}^v \leq \frac{C}{\sqrt{\varepsilon}}\sum_{q\geq -1}\sum_{|q-q'|\leq 4} 2^{-q}\sum_{0\leq j \leq N} \|\Delta_j^v b^v\|_{L_v^\infty L_h^2} \Big( \|\partial_z \Delta_{q'}^v a\|_{L^2}^\varepsilon \|\partial_z \Delta_{q'}^v \nabla_h a\|_{L^2}^{1-\varepsilon}
\hfill\cr\hfill
+ \|\partial_z \Delta_{q'}^v a\|_{L^2} \Big) \Big( \|\Delta_{q}^v b^h\|_{L^2}^\varepsilon \|\Delta_{q}^v \nabla_h b^h\|_{L^2}^{1-\varepsilon} + \|\Delta_{q'}^v b^h\|_{L^2} \Big).
}
$$

On the one hand, from the Bernstein lemma \ref{Bernstein et Gagliardo-Nirenberg} and the assumption $a\in H^{0,1/2}$, we get
\begin{align*}
\|\partial_z \Delta_{q'}^v a\|_{L^2}^\varepsilon \|\partial_z \Delta_{q'}^v \nabla_h a\|_{L^2}^{1-\varepsilon} & \leq C2^{q'} \|\Delta_{q'}^v a\|_{L^2}^\varepsilon \|\Delta_{q'}^v \nabla_h a\|_{L^2}^{1-\varepsilon} \\
& \leq C 2^{q'/2} c_{q'} \|a\|_{H^{0,1/2}}^\varepsilon \|\nabla_h a\|_{H^{0,1/2}}^{1-\varepsilon},
\end{align*}
where $(c_{q'})$ is a sequence such that $\sum_q \tilde{c}_q^2=1$.

On the other hand, since $b^h$ is estimated in $H^{0,-1/2}$, we have
$$
\|\Delta_q^v b^h\|_{L^2}^{1-\varepsilon}\|\Delta_q^v \nabla_h b^h\|_{L^2}^\varepsilon \leq C 2^{q/2} \tilde{c}_q \|b^h\|_{H^{0,-1/2}}^{1/2} \|\nabla_h b^h\|_{H^{0,-1/2}}^{1/2}.
$$

It remains to estimate $\sum_{0\leq j \leq N} \|\Delta_j^v b^v\|_{L_v^\infty L_h^2}$. From the Bernstein Lemma \ref{Bernstein et Gagliardo-Nirenberg} and the divergence-free condition, we get
\begin{align*}
\sum_{0\leq j \leq N} \|\Delta_j^v b^v\|_{L_v^\infty L_h^2}
& \leq C \sum_{0\leq j \leq N} 2^{j/2} \|\Delta_j^v b^v\|_{L^2} \\
& \leq C \sum_{0\leq j \leq N} 2^{-j/2} \|\Delta_j^v \partial_z b^v\|_{L^2} \\
& = C \sum_{0\leq j \leq N} 2^{-j/2} \|\Delta_j^v \nabla_h b^v\|_{L^2} \\
& \leq C \sqrt{N} \|\nabla_h b^h\|_{H^{0,-1/2}},
\end{align*}
where we used the Cauchy-Schwarz inequality in the last step.

Thus, we obtain the estimate
$$
\displaylines{
\tilde{T}_{1,N}^v \leq C \sqrt{\frac{N}{\varepsilon}} \sum_{q\geq -1} \tilde{c}_q \left( \sum_{|q-q'|\leq 4} c_{q'} \right) \|a\|_{H^{0,1/2}}^\varepsilon \|\nabla_h a\|_{H^{0,1/2}}^{1-\varepsilon}
\hfill\cr\hfill
\times \left( \|b^h\|_{H^{0,-1/2}}^{1-\varepsilon} \|\nabla_h b^h\|_{H^{0,-1/2}}^{1+\varepsilon} + \|b^h\|_{H^{0,-1/2}} \|\nabla_h b^h\|_{H^{0,-1/2}} \right)
\cr
\leq C \sqrt{\frac{N}{\varepsilon}} \|a\|_{H^{0,1/2}}^\varepsilon \|\nabla_h a\|_{H^{0,1/2}}^{1-\varepsilon} \bigg( \|b^h\|_{H^{0,-1/2}}^{1-\varepsilon} \|\nabla_h b^h\|_{H^{0,-1/2}}^{1+\varepsilon} \hfill\cr\hfill + \|b^h\|_{H^{0,-1/2}} \|\nabla_h b^h\|_{H^{0,-1/2}} \bigg).
}
$$

Using the inequality $ab \leq a^{\frac{2}{1+\varepsilon}} + b^{\frac{2}{1-\varepsilon}}$, we obtain
$$
\displaylines{
\tilde{T}_{1,N}^v \leq c \left(\frac{N}{\varepsilon}\right)^{\frac{1}{1-\varepsilon}} \|a\|_{H^{0,1/2}}^{\frac{\varepsilon}{1-\varepsilon}} \|\nabla_h a\|_{H^{0,1/2}}^2
+ c \frac{N}{\varepsilon} \|a\|_{H^{0,1/2}}^{2\varepsilon} \|\nabla_h a\|_{H^{0,1/2}}^{2(1-\varepsilon)} \|b^h\|_{H^{0,1/2}}^2
\hfill\cr\hfill
+ \frac{\nu_h}{100} \|\nabla_h b^h\|_{H^{0,-1/2}}^2.
}
$$

Choosing $\varepsilon = \frac{1}{1+\ln N}$ gives
\begin{equation}\label{estimée tilde{T}_{1,N}^v}
\tilde{T}_{1,N}^{v} \leq C(1+\ln N)f(t)\|b^h(t)\|_{H^{0,-1/2}}^2+\frac{\nu_h}{100}\|\nabla_h b^h\|_{H^{0,-1/2}}^2.
\end{equation}

We now estimate $\tilde{T}_1^{v,N}$. From Hölder’s inequality, we have
$$
\tilde{T}_{1}^{v,N}\leq \sum_{q\geq -1} 2^{-q} \sum_{|q-q'|\leq 4} \sum_{j\geq N} \|\Delta_j^v b^v\|_{L_v^\infty L_h^2} \|\partial_z \Delta_{q'}^v a\|_{L_v^2 L_h^4} \|\Delta_q^v b^h\|_{L_v^2 L_h^4}.
$$

Since $\nabla_h b^h \in H^{0,1/2}$, we get
\begin{align*}
\sum_{j\geq N}\|\Delta_j^v b^v\|_{L_v^\infty L_h^2}
& \leq C \sum_{j\geq N} 2^{j/2} \|\Delta_j^v b^v\|_{L^2} \\
& \leq C \sum_{j\geq N} 2^{-j/2} \|\Delta_j^v \partial_z b^v\|_{L^2} \\
& \leq C \sum_{j\geq N} 2^{-j} 2^{j/2} \|\Delta_j^v \nabla_h b^h\|_{L^2} \\
& \leq C 2^{-N} \|\nabla_h b^h\|_{H^{0,1/2}}.
\end{align*}

Thus, we obtain
$$
\displaylines{
\tilde{T}_1^{v,N} \leq C 2^{-N} \|\nabla_h b^h\|_{H^{0,1/2}} \sum_{q\geq -1} 2^{-q} \sum_{|q-q'|\leq 4} 2^{q'} \|\Delta_{q'}^v a\|_{L^2}^{1/2} \|\Delta_{q'}^v \nabla_h a\|_{L^2}^{1/2}
\hfill\cr\hfill
\times \left( \|\Delta_q^v b^h\|_{L^2}^{1/2} \|\nabla_h \Delta_q^v b^h\|_{L^2}^{1/2} + \|\Delta_q^v b^h\|_{L^2} \right)
\cr
\leq C 2^{-N} \|\nabla_h b^h\|_{H^{0,1/2}} \|a\|_{H^{0,1/2}}^{1/2} \|\nabla_h a\|_{H^{0,1/2}}^{1/2}
\bigg( \|b^h\|_{H^{0,-1/2}}^{1/2} \|\nabla_h b^h\|_{H^{0,-1/2}}^{1/2} \hfill\cr\hfill + \|b^h\|_{H^{0,-1/2}} \bigg).
}
$$

Using the inequality $ab \leq \frac{3}{4} a^{4/3} + \frac{1}{4} b^4$, we obtain
\begin{multline}\label{tilde{T}_1^{v,N}}
\tilde{T}_1^{v,N} \leq C2^{-4N/3}f(t)\|b^h(t)\|_{H^{0,-1/2}}^{2/3}
+ Cf(t)\|b^h(t)\|_{H^{0,-1/2}}^2
\hfill\cr\hfill
+ \frac{\nu_h}{100}\|\nabla_h b^h(t)\|_{H^{0,-1/2}}^2.
\end{multline}

With the choice $N = -\ln \|b^h(t)\|_{H^{0,-1/2}}^2$, and using estimates \eqref{ineq_tilde_t_1_0}, \eqref{estimée tilde{T}_{1,N}^v} and \eqref{tilde{T}_1^{v,N}}, we finally obtain
\begin{equation}\label{estimée tilde T1v}
    \begin{aligned}
        \tilde{T}_1^v\leq Cf(t) \|b^h(t)\|_{H^{0,-1/2}}^2 (1-\ln \|b^h(t)\|_{H^{0,-1/2}}^2) \\
        \times \ln(1-\ln \|b^h(t)\|_{H^{0,-1/2}}^2)
        + \frac{2\nu_h}{25} \|\nabla_h b^h(t)\|_{H^{0,-1/2}}^2.
    \end{aligned}
\end{equation}

The other terms $\tilde{T}_2^v$ and $\tilde{R}^v$ are estimated almost identically as in \cite{Paicu1}.
\end{proof}

Eventually, taking \( a = u_2^h \), \( b = w \), we obtain a.e. on $[0,T]$:
\begin{multline}\label{quatrième terme unicité}
\tilde{I} = \sum_{q \geq -1} 2^{-q} \int \Delta_q^v(w^v \partial_z u_2^h) \Delta_q^v w^h \, dx \\
\leq C f(t) \|w^h\|_{H^{0,-1/2}}^2 (1 - \ln \|w^h\|_{H^{0,-1/2}}^2) \ln(1 - \ln \|w^h\|_{H^{0,-1/2}}^2) \\
+ \frac{3\nu_h}{25} \|\nabla_h w^h\|_{H^{0,-1/2}}^2.
\end{multline}

Finally, summing estimates \eqref{premier terme unicité}, \eqref{deuxième terme unicité}, \eqref{troisième terme unicité}, and \eqref{quatrième terme unicité}, and letting \( \phi(t) := \|w^h(t)\|_{H^{0,-1/2}}^2 \), we obtain:
\[
\phi(t) \leq \phi(0)+ C \int_0^t f(\tau) \phi(\tau) (1 - \ln \phi(\tau)) \ln(1 - \ln \phi(\tau))d\tau,
\]
with \( f \in L_{\text{loc}}^1 \) as defined in \eqref{fonction unicité}.
\end{proof}
\subsection{Uniqueness}

The proof of uniqueness is a direct application of Lemma \ref{lemme d'unicité}. We present here the uniqueness result for the primitive equations in the space \( H^{0,1/2} \), which is a larger space than the anisotropic Besov space \( \mathcal{B}^{0,1/2} \). Let \( u_1^h \) and \( u_2^h \) be two solutions in \( L_{\text{loc}}^\infty(H^{0,1/2}) \) with \( \nabla_h u_1^h \), \( \nabla_h u_2^h \in L_{\text{loc}}^2(H^{0,1/2}) \), corresponding to the same initial data.

We aim to show that \( w^h = u_2^h - u_1^h \) vanishes in \( \mathcal{C}([0,T];H^{0,-1/2}) \) and satisfies \( \nabla_h w^h = 0 \) in \( L_T^2(H^{0,-1/2}) \). The equation satisfied by \( w \) is:
\[
\left\{
\begin{array}{l}
\partial_t w^h + u_1 \cdot \nabla w^h + w \cdot \nabla u_2^h - \Delta_h w^h = -\nabla_h p, \\
\partial_z p = 0, \\
\dive w = 0.
\end{array}
\right.
\]

Lemma \ref{lemme d'unicité} yields the estimate 
$$\displaylines{\|w^h(t)\|_{H^{0,-1/2}}^2\leq \|w_0^h\|_{H^{0,-1/2}}^2+ \int_0^t C f(\tau) \|w^h\|_{H^{0,-1/2}}^2 (1-\ln \|w^h\|_{H^{0,-1/2}}^2)\hfill\cr\hfill\times\ln(1-\ln \|w^h\|_{H^{0,-1/2}}^2)d\tau,}$$
where \( f \) is the function such that
\[
\begin{aligned}
\|f\|_{L_T^1} \leq C_T (1 + \|\nabla_h u_1^h\|_{L_T^2(H^{0,1/2})}^2 + \|\nabla_h u_2^h\|_{L_T^2(H^{0,1/2})}^2 + \|\nabla_h w^h\|_{L_T^2(H^{0,1/2})}^2) \\
\times (1 + \|u_1^h\|_{L_T^\infty(H^{0,1/2})}^2 + \|u_2^h\|_{L_T^\infty(H^{0,1/2})}^2 + \|w^h\|_{L_T^\infty(H^{0,1/2})}^2).
\end{aligned}
\]

Note that, since \( w^h = u_2^h - u_1^h \), we have
\[
\|w^h\|_{L_T^\infty(H^{0,1/2})} \leq \|u_1^h\|_{L_T^\infty(H^{0,1/2})} + \|u_2^h\|_{L_T^\infty(H^{0,1/2})},
\]
and likewise
\[
\|\nabla_h w^h\|_{L_T^2(H^{0,1/2})} \leq \|\nabla_h u_1^h\|_{L_T^2(H^{0,1/2})} + \|\nabla_h u_2^h\|_{L_T^2(H^{0,1/2})}.
\]
Thus, $f$ is indeed integrable on $[0,T]$.

Applying Osgood's Lemma \ref{Osgood} with \( \rho(s) := \|w^h(s)\|_{H^{0,-1/2}}^2 \) and \( \mu(r) := r(1 - \ln r)\ln(1 - \ln r) \), we conclude the proof of uniqueness.

Moreover, one can derive the following continuity estimate with respect to the initial data:
$$\|u_1^h(t)-u_2^h(t)\|_{H^{0,-1/2}}^2 \leq C \exp\left(-(-\ln \|u_1^h(0)-u_2^h(0)\|_{H^{0,-1/2}}^2)^{\exp(\int_0^t f(s)ds)}\right).$$

\section{Anisotropic Navier-Stokes equations}
Let us prove the existence theorem for primitive equations.

\subsection{Rewriting and decomposing the equations}~\\
First of all, we rewrite \eqref{NS remise à l'échelle} in the following way:
\begin{equation}\label{NS remise à l'échelle2}
    \left\{\begin{array}{l} \displaystyle
     \frac{d}{dt}\begin{pmatrix}
         u_\varepsilon^h \\ \varepsilon u_\varepsilon^v
     \end{pmatrix} +u_\varepsilon\cdot\nabla\begin{pmatrix}
         u_\varepsilon^h \\ \varepsilon u_\varepsilon^v
     \end{pmatrix}-\nu_h \Delta_h \begin{pmatrix}
         u_\varepsilon^h \\ \varepsilon u_\varepsilon^v
     \end{pmatrix} -\varepsilon^{\gamma-2}\partial_z^2 \begin{pmatrix}
         u_\varepsilon^h \\ \varepsilon u_\varepsilon^v
     \end{pmatrix} \\ ~~~~~~~~~~~~~~~~~~~~~~~~~~~~~~~~~~~~~~~~~~~~~~~~~~~~~~~~~~~~~~~~~~~~~ +\nabla_\varepsilon p_\varepsilon=0,\\
     \dive_\varepsilon (u_\varepsilon^h,\varepsilon u_\varepsilon^v)=0, \\
     u_{\varepsilon}^h \ \text{even with respect to the vertical variable} \ z, \\
     u_{\varepsilon}^v \ \text{odd with respect to the vertical variable} \ z,
\end{array}\right. \end{equation}
where $\dive_\varepsilon$ is defined by : \begin{eqnarray}\label{divergence epsilon}
\dive_\varepsilon U\mathrel{\mathop:}=\dive_h(U^h)+\varepsilon^{-1}\partial_z U^v
\end{eqnarray} and $\displaystyle \nabla_\varepsilon$ by \begin{eqnarray}\label{nabla epsilon}
\nabla_\varepsilon\mathrel{\mathop:}=\begin{pmatrix}
    \nabla_h \\ \varepsilon^{-1} \partial_z \end{pmatrix}. \end{eqnarray}

Let us now decompose this system into two coupled systems.

We set $u_\varepsilon=\tilde{u}_\varepsilon+\overline{u}_\varepsilon$ verifying
\begin{equation}\label{Navier-Stokes 2D epsilon}\left\{ \begin{array}{l}  
\partial_t \overline{u}_\varepsilon^h+\overline{u}_\varepsilon^h\cdot \nabla_h \overline{u}_\varepsilon^h-\nu_h\Delta_h \overline{u}_\varepsilon^h +\nabla_h \overline{p}_\varepsilon=0, \\ \dive_h \overline{u}_\varepsilon^h=0, \\ \overline{u}_\varepsilon^h(t=0)=\overline{u}_{\varepsilon,0}^h, \\ \overline{u}_\varepsilon^v=0, \end{array}\right.\end{equation}

and 
\begin{equation}\label{NS sans moyenne verticale}\left\{ \begin{array}{l}  
\displaystyle\frac{d}{dt}\begin{pmatrix} \tilde{u}_\varepsilon^h \\ \varepsilon \tilde{u}_\varepsilon^v \end{pmatrix} +\tilde{u}_\varepsilon\cdot \nabla \begin{pmatrix} \tilde{u}_\varepsilon^h \\ \varepsilon \tilde{u}_\varepsilon^v \end{pmatrix}+\overline{u}_\varepsilon^h\cdot \nabla_h \begin{pmatrix} \tilde{u}_\varepsilon^h \\ \varepsilon \tilde{u}_\varepsilon^v \end{pmatrix} +\tilde{u}_\varepsilon^h\cdot\nabla_h \begin{pmatrix} \overline{u}_\varepsilon^h \\ 0 \end{pmatrix} \\ ~~~~~~~~~~~~~~~~~~~~~~~~~~ -\nu_h\Delta_h \begin{pmatrix} \tilde{u}_\varepsilon^h \\ \varepsilon \tilde{u}_\varepsilon^v \end{pmatrix}-\varepsilon^{\gamma-2}\partial_z^2 \begin{pmatrix}
         u_\varepsilon^h \\ \varepsilon u_\varepsilon^v
     \end{pmatrix}+\nabla_h \tilde{p}_\varepsilon=0 
\\  \dive_\varepsilon (u_\varepsilon^h,\varepsilon u_\varepsilon^v)=0, 
\\ \\ \begin{pmatrix} \tilde{u}_\varepsilon^h \\ \varepsilon \tilde{u}_\varepsilon^v \end{pmatrix}(t=0)=\begin{pmatrix} \tilde{u}_{\varepsilon,0}^h \\ \varepsilon \tilde{u}_{\varepsilon,0}^v \end{pmatrix}.  \end{array}\right.\end{equation}
\subsection{Bidimensional Navier-Stokes equations}~\\
The equation \eqref{Navier-Stokes 2D epsilon} is a bidimensional Navier-Stokes equation with three components. We then know that it has a unique global solution $\overline{u}_\varepsilon^h$ (see for instance \cite{Gallagher}) in the functional space $$\overline{u}_\varepsilon^h\in L^\infty(\R_+;L^2(\Omega_h))\cap L^2(\R_+; \dot H^1(\Omega_h)),$$ 
verifying the following energy equality \begin{equation}\label{estimée NS 2D epsilon}\|\overline{u}_\varepsilon^h\|_{L^2(\Omega_h)}+2\nu_h\int_0^t \|\nabla_h \overline{u}_\varepsilon^h(\tau)\|_{L^2(\Omega_h)}^2 d\tau=\|\overline{u}_{\varepsilon,0}^h\|_{L^2}^2.\end{equation}

\subsection{A priori estimate of the Navier-Stokes equations without vertical mean value}~\\
Let us now prove the a priori estimates for the equation \eqref{NS sans moyenne verticale}.

First, let us remove the pressure. In the case of the Navier-Stokes equation without anisotropy, we apply the Leray projector $\mathbb{P}$. With the anisotropy present here, we consider the anisotropic Leray projector: \begin{equation}\label{projecteur de Leray epsilon}\mathbb{P}_\varepsilon\mathrel{\mathop:}=Id+\nabla_\varepsilon (-\Delta_\varepsilon)^{-1}\dive_{\varepsilon}, \quad  \text{where} \ \Delta_\varepsilon\mathrel{\mathop:}= \dive_\varepsilon \nabla_\varepsilon,\end{equation}
with $\dive_\varepsilon$ and $\nabla_\varepsilon$ defined in \eqref{divergence epsilon} and \eqref{nabla epsilon}.

 \begin{lemma}{\cite{Moi}}\label{continuité opérateur de Leray} The operator $-\nabla_\varepsilon(-\Delta_\varepsilon)^{-1}\dive_\varepsilon$ is an orthogonal projector on $L^2$.
\end{lemma}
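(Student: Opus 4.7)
The plan is to verify the two defining properties of an orthogonal projector on $L^2$, namely idempotence $P^2 = P$ and self-adjointness $P = P^\ast$, where $P := -\nabla_\varepsilon(-\Delta_\varepsilon)^{-1}\dive_\varepsilon$. Both rest on two elementary identities: the algebraic relation $\dive_\varepsilon \nabla_\varepsilon = \Delta_\varepsilon$, immediate from the definitions \eqref{divergence epsilon} and \eqref{nabla epsilon}, and the adjoint relation $(\nabla_\varepsilon)^\ast = -\dive_\varepsilon$ on the periodic space, obtained by a single integration by parts (the absence of boundary terms uses the vertical periodicity, together with the parity conditions on the components of $u_\varepsilon$).

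For idempotence I would simply expand $P \circ P$: the middle factor $\dive_\varepsilon\nabla_\varepsilon$ equals $\Delta_\varepsilon$ by the first identity, which cancels one copy of $(-\Delta_\varepsilon)^{-1}$ up to a sign, yielding $P^2 = -\nabla_\varepsilon(-\Delta_\varepsilon)^{-1}\dive_\varepsilon = P$. For self-adjointness, two integrations by parts do the job: first, $(Pu \mid v)_{L^2} = \bigl((-\Delta_\varepsilon)^{-1}\dive_\varepsilon u \bigm| \dive_\varepsilon v\bigr)_{L^2}$ by passing $\nabla_\varepsilon$ to the other side; then the self-adjointness of $(-\Delta_\varepsilon)^{-1}$ (it is the inverse of a positive symmetric elliptic operator, hence self-adjoint on its natural domain) and a second integration by parts give $(u \mid Pv)_{L^2}$.

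The cleanest realization of both properties at once is to go to the Fourier side: on each nonzero frequency $n = (n_h, n_v)$, the operator $P$ acts as multiplication by the symmetric matrix $M_\varepsilon(n) := |\xi^{(\varepsilon)}|^{-2}\, \xi^{(\varepsilon)} \otimes \xi^{(\varepsilon)}$ with $\xi^{(\varepsilon)} := (n_h, \varepsilon^{-1} n_v)$, which is manifestly a rank-one symmetric idempotent, i.e. an orthogonal projection on $\mathbb{C}^3$. Summing (or integrating) over frequencies via Plancherel then yields that $P$ is an orthogonal projector on $L^2(\Omega)$. The only delicate point to address is the zero Fourier mode, where $-\Delta_\varepsilon$ is not invertible; however $\dive_\varepsilon u$ always has zero spatial mean (being a sum of derivatives of a periodic field), so $(-\Delta_\varepsilon)^{-1}\dive_\varepsilon u$ is unambiguous on the mean-free subspace, and $P$ sends constant vector fields to $0$, which is consistent with being an orthogonal projector. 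No real obstacle is expected: the work is algebraic, modulo the routine care needed to handle the zero-frequency mode.
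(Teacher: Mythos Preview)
Your argument is correct and complete: verifying idempotence and self-adjointness, either abstractly via integration by parts and the identity $\dive_\varepsilon\nabla_\varepsilon=\Delta_\varepsilon$, or concretely on the Fourier side where $P$ becomes the rank-one symmetric idempotent $|\xi^{(\varepsilon)}|^{-2}\,\xi^{(\varepsilon)}\otimes\xi^{(\varepsilon)}$, is exactly the standard route. One small remark: the parity conditions on $u_\varepsilon$ are not needed for the boundary terms to vanish in the integration by parts; vertical periodicity alone suffices, and indeed the lemma is stated for a general $L^2$ vector field, not only for fields with the symmetry of \eqref{NS remise à l'échelle}.

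As for comparison with the paper: the present article does not prove this lemma at all. It is simply quoted from the author's earlier work \cite{Moi}, so there is no ``paper's own proof'' to compare against. Your Fourier-side computation is the natural proof and would be the expected content of the cited reference.
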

In particular, $\mathbb{P}_\varepsilon$ is a continuous operator of $\mathcal{B}^{0,1/2}$ with norm 1 that verifies \begin{equation}\label{identité projecteur}\mathbb{P}_\varepsilon (u_{\varepsilon}^h,\varepsilon u_\varepsilon^v)= (u_{\varepsilon}^h,\varepsilon u_\varepsilon^v),\end{equation} for $u_\varepsilon\in \mathcal{B}^{0,1/2}$ verifying $\dive_\varepsilon u=0$. 
\\
Finding solution $\left((\tilde{u}_\varepsilon^h,\varepsilon \tilde{u}_\varepsilon^v),p_\varepsilon\right)$ to the system \eqref{NS sans moyenne verticale} with initial data $\begin{pmatrix}
    \tilde{u}_{\varepsilon,0}^h \\ \varepsilon \tilde{u}_{\varepsilon,0}^v
\end{pmatrix}$ is equivalent to finding solutions $(\tilde{u}_\varepsilon^h,\varepsilon \tilde{u}_\varepsilon^v)$  to the following system : \begin{equation}\label{NS projeté epsilon}\left\{ \begin{array}{l}  
\displaystyle\frac{d}{dt}\begin{pmatrix} \tilde{u}_\varepsilon^h \\ \varepsilon \tilde{u}_\varepsilon^v \end{pmatrix} +\mathbb{P}_\varepsilon\left(\tilde{u}_\varepsilon\cdot \nabla \begin{pmatrix} \tilde{u}_\varepsilon^h \\ \varepsilon \tilde{u}_\varepsilon^v \end{pmatrix}\right)+\mathbb{P}_\varepsilon
\left(\overline{u}_\varepsilon^h\cdot \nabla_h \begin{pmatrix} \tilde{u}_\varepsilon^h \\ \varepsilon \tilde{u}_\varepsilon^v \end{pmatrix}\right)  \\ ~~~~~~+\mathbb{P}_\varepsilon\left(\tilde{u}_\varepsilon^h\cdot\nabla_h \begin{pmatrix} \overline{u}_\varepsilon^h \\ 0 \end{pmatrix}\right) -\nu_h\Delta_h \begin{pmatrix} \tilde{u}_\varepsilon^h \\ \varepsilon \tilde{u}_\varepsilon^v \end{pmatrix}-\varepsilon^{\gamma-2}\partial_z^2 \begin{pmatrix}
         u_\varepsilon^h \\ \varepsilon u_\varepsilon^v
     \end{pmatrix}=0, 
\\ \begin{pmatrix} \tilde{u}_\varepsilon^h \\ \varepsilon \tilde{u}_\varepsilon^v \end{pmatrix}(t=0)=\begin{pmatrix} \tilde{u}_{\varepsilon,0}^h \\ \varepsilon \tilde{u}_{\varepsilon,0}^v \end{pmatrix}.  \end{array}\right.\end{equation}

Let us assume we have a smooth enough solution $\begin{pmatrix} \tilde{u}_\varepsilon^h \\ \varepsilon \tilde{u}_\varepsilon^v \end{pmatrix}$ on $[0,T]\times \Omega$.

By applying the localization operator $\Delta_q^v$ to the system \eqref{NS projeté epsilon}, taking the product scalar with $\Delta_q^v \begin{pmatrix} \tilde{u}_\varepsilon^h \\ \varepsilon \tilde{u}_\varepsilon^v \end{pmatrix}$ and integrating by parts, we obtain the following equality:
$$\displaylines{\frac{1}{2}\frac{d}{dt}\|\Delta_q^v\begin{pmatrix}
    \tilde{u}_\varepsilon^h \\ \varepsilon \tilde{u}_\varepsilon^v
\end{pmatrix}\|_{L^2}^2+\nu_h \|\Delta_q^v \nabla_h \begin{pmatrix}
    \tilde{u}_\varepsilon^h \\ \varepsilon \tilde{u}_\varepsilon^v
\end{pmatrix}\|_{L^2}^2+\varepsilon^{\gamma-2}\|\Delta_q^v \partial_z \begin{pmatrix}
    \tilde{u}_\varepsilon^h \\ \varepsilon \tilde{u}_\varepsilon^v
\end{pmatrix}\|_{L^2}^2 \hfill\cr\hfill = -\left(\mathbb{P}_\varepsilon\Delta_q^v \left(\tilde{u}_\varepsilon\cdot\nabla \begin{pmatrix}
    \tilde{u}_\varepsilon^h \\ \varepsilon \tilde{u}_\varepsilon^v
\end{pmatrix}\right) \middle| \Delta_q^v \begin{pmatrix}
    \tilde{u}_\varepsilon^h \\ \varepsilon \tilde{u}_\varepsilon^v
\end{pmatrix} \right)_{L^2}
\hfill\cr\hfill
-\left(\mathbb{P}_\varepsilon\Delta_q^v \left(\overline{u}_\varepsilon\cdot\nabla \begin{pmatrix}
    \tilde{u}_\varepsilon^h \\ \varepsilon \tilde{u}_\varepsilon^v
\end{pmatrix}\right) \middle| \Delta_q^v \begin{pmatrix}
    \tilde{u}_\varepsilon^h \\ \varepsilon \tilde{u}_\varepsilon^v
\end{pmatrix} \right)_{L^2} 
\cr\hfill
-\left(\mathbb{P}_\varepsilon\Delta_q^v \left(\tilde{u}_\varepsilon\cdot\nabla \begin{pmatrix}
    \overline{u}_\varepsilon^h \\ 0
\end{pmatrix}\right) \middle| \Delta_q^v \begin{pmatrix}
    \tilde{u}_\varepsilon^h \\ \varepsilon \tilde{u}_\varepsilon^v
\end{pmatrix} \right)_{L^2}.}$$

By orthogonality of $\mathbb{P}_\varepsilon$ (Lemma \ref{projecteur de Leray epsilon}) and equality \eqref{identité projecteur}, we have by integrating the above identity between $0$ and $t$ with $t\in [0,T]$ :

\begin{multline}\label{première étape estimée a priori NS}
    \|\Delta_q^v \begin{pmatrix}
    \tilde{u}_\varepsilon^h \\ \varepsilon \tilde{u}_\varepsilon^v
\end{pmatrix}(t)\|_{L^2}^2+2\nu_h\int_0^t \|\Delta_q^v \nabla_h \begin{pmatrix}
    \tilde{u}_\varepsilon^h \\ \varepsilon \tilde{u}_\varepsilon^v
\end{pmatrix}\|_{L^2}^2 d\tau \\ +2\varepsilon^{\gamma-2}\int_0^t \|\Delta_q^v \partial_z \begin{pmatrix}
    \tilde{u}_\varepsilon^h \\ \varepsilon \tilde{u}_\varepsilon^v
\end{pmatrix}\|_{L^2}^2 d\tau \\ \leq \|\Delta_q^v \begin{pmatrix}
    \tilde{u}_{\varepsilon,0}^h \\ \varepsilon \tilde{u}_{\varepsilon,0}^v
\end{pmatrix}\|_{L^2}^2 +2\int_0^t \left|\left(\Delta_q^v \left(\tilde{u}_\varepsilon\cdot\nabla \begin{pmatrix}
    \tilde{u}_\varepsilon^h \\ \varepsilon \tilde{u}_\varepsilon^v
\end{pmatrix}\right) \middle| \Delta_q^v \begin{pmatrix}
    \tilde{u}_\varepsilon^h \\ \varepsilon \tilde{u}_\varepsilon^v
\end{pmatrix} \right)_{L^2}\right| d\tau \\ +2\int_0^t \left|\left(\Delta_q^v \left(\overline{u}_\varepsilon\cdot\nabla \begin{pmatrix}
    \tilde{u}_\varepsilon^h \\ \varepsilon \tilde{u}_\varepsilon^v
\end{pmatrix}\right) \middle| \Delta_q^v \begin{pmatrix}
    \tilde{u}_\varepsilon^h \\ \varepsilon \tilde{u}_\varepsilon^v
\end{pmatrix} \right)_{L^2} \right| d\tau \\ +2\int_0^t \left|\left(\Delta_q^v \left(\tilde{u}_\varepsilon\cdot\nabla 
    \overline{u}_\varepsilon^h\right) \middle| \Delta_q^v
    \tilde{u}_\varepsilon^h \right)_{L^2}\right| d\tau.
\end{multline}

By Lemma \ref{Premier lemme de convection} (in the case $\Omega_2$) or Lemma \ref{lemme convection final} (in the case $\Omega_1$), we have the following inequality for the first non-linear term:
$$\displaylines{\int_0^t \left|\left(\Delta_q^v \left(\tilde{u}_\varepsilon\cdot\nabla \begin{pmatrix}
    \tilde{u}_\varepsilon^h \\ \varepsilon \tilde{u}_\varepsilon^v
\end{pmatrix}\right) \middle| \Delta_q^v \begin{pmatrix}
    \tilde{u}_\varepsilon^h \\ \varepsilon \tilde{u}_\varepsilon^v
\end{pmatrix} \right)_{L^2}\right| d\tau \hfill\cr\leq C c_q 2^{-q}\bigg(\|\nabla_h \tilde{u}_\varepsilon^h\|_{\tilde{L}_T^2(\mathcal{B}^{0,1/2})}\|\begin{pmatrix}
    \tilde{u}_\varepsilon^h \\ \varepsilon \tilde{u}_\varepsilon^v
\end{pmatrix}\|_{\tilde{L}_t^\infty(\mathcal{B}^{0,1/2})} \hfill\cr\hfill\times \|\nabla_h \begin{pmatrix}
    \tilde{u}_\varepsilon^h \\ \varepsilon \tilde{u}_\varepsilon^v
\end{pmatrix}\|_{\tilde{L}_T^2(\mathcal{B}^{0,1/2})}+\|\tilde{u}_\varepsilon^h\|_{\tilde{L}_t^\infty(\mathcal{B}^{0,1/2})}\|\nabla_h \begin{pmatrix}
    \tilde{u}_\varepsilon^h \\ \varepsilon \tilde{u}_\varepsilon^v
\end{pmatrix}\|_{\tilde{L}_T^2(\mathcal{B}^{0,1/2})}^2\bigg)
\cr \leq C c_q 2^{-q} \|\begin{pmatrix}
    \tilde{u}_\varepsilon^h \\ \varepsilon \tilde{u}_\varepsilon^v
\end{pmatrix}\|_{\tilde{L}_T^\infty(\mathcal{B}^{0,1/2})} \|\nabla_h \begin{pmatrix}
    \tilde{u}_\varepsilon^h \\ \varepsilon \tilde{u}_\varepsilon^v
\end{pmatrix}\|_{\tilde{L}_T^2(\mathcal{B}^{0,1/2})}^2. \hfill }$$

Since $\overline{u}_\varepsilon^h$ is constant with respect to the vertical variable, we have :
$$\displaylines{\int_0^t \left|\left(\Delta_q^v \left(\overline{u}_\varepsilon\cdot\nabla \begin{pmatrix}
    \tilde{u}_\varepsilon^h \\ \varepsilon \tilde{u}_\varepsilon^v
\end{pmatrix}\right) \middle| \Delta_q^v \begin{pmatrix}
    \tilde{u}_\varepsilon^h \\ \varepsilon \tilde{u}_\varepsilon^v
\end{pmatrix} \right)_{L^2} \right| d\tau \hfill\cr\hfill = \int_0^t \left|\left( \overline{u}_\varepsilon\cdot\nabla \Delta_q^v \begin{pmatrix}
    \tilde{u}_\varepsilon^h \\ \varepsilon \tilde{u}_\varepsilon^v
\end{pmatrix} \middle| \Delta_q^v \begin{pmatrix}
    \tilde{u}_\varepsilon^h \\ \varepsilon \tilde{u}_\varepsilon^v
\end{pmatrix} \right)_{L^2} \right| d\tau=0}$$
where the last equality is obtained by integration by parts and the fact that $\dive_h \overline{u}_\varepsilon^h=0$.

By Lemma \ref{3eme terme de convection}, we get the following inequality: 
$$\displaylines{\int_0^t \left|\left(\mathbb{P}_\varepsilon\Delta_q^v \left(\tilde{u}_\varepsilon\cdot\nabla 
    \overline{u}_\varepsilon^h \right) \middle| \Delta_q^v
    \tilde{u}_\varepsilon^h \right)_{L^2}\right| d\tau \hfill\cr\hfill \leq C 2^{-q} c_q \|\nabla_h \overline{u}_\varepsilon^h\|_{L_T^2(L^2(\Omega_h))}\|
    \tilde{u}_\varepsilon^h\|_{\tilde{L}_T^\infty(\mathcal{B}^{0,1/2})} \|\nabla_h \tilde{u}_\varepsilon^h\|_{\tilde{L}_T^2(\mathcal{B}^{0,1/2})}. }$$

By inequality \eqref{première étape estimée a priori NS}, we then obtain:
$$\displaylines{\|\Delta_q^v \begin{pmatrix}
    \tilde{u}_\varepsilon^h \\ \varepsilon \tilde{u}_\varepsilon^v
\end{pmatrix}(t)\|_{L^2}^2+2\nu_h\int_0^t \|\Delta_q^v \nabla_h \begin{pmatrix}
    \tilde{u}_\varepsilon^h \\ \varepsilon \tilde{u}_\varepsilon^v
\end{pmatrix}\|_{L^2}^2 d\tau \\ \hfill\cr\hfill +2\varepsilon^{\gamma-2}\int_0^t \|\Delta_q^v \partial_z \begin{pmatrix}
    \tilde{u}_\varepsilon^h \\ \varepsilon \tilde{u}_\varepsilon^v
\end{pmatrix}\|_{L^2}^2 d\tau \cr \leq \|\Delta_q^v \begin{pmatrix}
    \tilde{u}_{\varepsilon,0}^h \\ \varepsilon \tilde{u}_{\varepsilon,0}^v
\end{pmatrix}\|_{L^2}^2+ C c_q 2^{-q} \|\begin{pmatrix}
    \tilde{u}_\varepsilon^h \\ \varepsilon \tilde{u}_\varepsilon^v
\end{pmatrix}\|_{\tilde{L}_T^\infty(\mathcal{B}^{0,1/2})} \|\nabla_h \begin{pmatrix}
    \tilde{u}_\varepsilon^h \\ \varepsilon \tilde{u}_\varepsilon^v
\end{pmatrix}\|_{\tilde{L}_T^2(\mathcal{B}^{0,1/2})}^2 \cr\hfill +C 2^{-q} c_q \|\nabla_h \overline{u}_\varepsilon^h\|_{L_T^2(L^2(\Omega_h))}\|\begin{pmatrix}
    \tilde{u}_\varepsilon^h \\ \varepsilon \tilde{u}_\varepsilon^v
\end{pmatrix}\|_{\tilde{L}_T^\infty(\mathcal{B}^{0,1/2})} \|\nabla_h \begin{pmatrix}
    \tilde{u}_\varepsilon^h \\ \varepsilon \tilde{u}_\varepsilon^v
\end{pmatrix}\|_{\tilde{L}_T^2(\mathcal{B}^{0,1/2})}.}$$

By inequality $2ab\leq a^2+b^2$, we then have:  
$$\displaylines{\|\Delta_q^v \begin{pmatrix}
    \tilde{u}_\varepsilon^h \\ \varepsilon \tilde{u}_\varepsilon^v
\end{pmatrix}(t)\|_{L^2}^2+2\nu_h\int_0^t \|\Delta_q^v \nabla_h \begin{pmatrix}
    \tilde{u}_\varepsilon^h \\ \varepsilon \tilde{u}_\varepsilon^v
\end{pmatrix}\|_{L^2}^2 d\tau \\ \hfill\cr\hfill +2\varepsilon^{\gamma-2}\int_0^t \|\Delta_q^v \partial_z \begin{pmatrix}
    \tilde{u}_\varepsilon^h \\ \varepsilon \tilde{u}_\varepsilon^v
\end{pmatrix}\|_{L^2}^2 d\tau \cr \leq \|\Delta_q^v \begin{pmatrix}
    \tilde{u}_{\varepsilon,0}^h \\ \varepsilon \tilde{u}_{\varepsilon,0}^v
\end{pmatrix}\|_{L^2}^2+ C c_q 2^{-q} \|\begin{pmatrix}
    \tilde{u}_\varepsilon^h \\ \varepsilon \tilde{u}_\varepsilon^v
\end{pmatrix}\|_{\tilde{L}_T^\infty(\mathcal{B}^{0,1/2})} \|\nabla_h \begin{pmatrix}
    \tilde{u}_\varepsilon^h \\ \varepsilon \tilde{u}_\varepsilon^v
\end{pmatrix}\|_{\tilde{L}_T^2(\mathcal{B}^{0,1/2})}^2 \cr\hfill +2^{-q}c_q\nu_h \|\nabla_h \begin{pmatrix}
    \tilde{u}_\varepsilon^h \\ \varepsilon \tilde{u}_\varepsilon^v
\end{pmatrix}\|_{\tilde{L}_T^2(\mathcal{B}^{0,1/2})}^2  \cr\hfill+\frac{C^2}{\nu_h} 2^{-q} c_q\|\begin{pmatrix}
    \tilde{u}_\varepsilon^h \\ \varepsilon \tilde{u}_\varepsilon^v
\end{pmatrix}\|_{\tilde{L}_T^\infty(\mathcal{B}^{0,1/2})}^2 \|\nabla_h \overline{u}_\varepsilon^h\|_{L_T^2(L^2(\Omega_h))}^2.}$$

Taking the square root of the previous estimate, then the supremum at $t\in [0,T]$ and summing over $q$, we have :
$$\displaylines{\|\begin{pmatrix}
    \tilde{u}_\varepsilon^h \\ \varepsilon \tilde{u}_\varepsilon^v
\end{pmatrix}\|_{\tilde{L}_T^\infty(\mathcal{B}^{0,1/2})}+\sqrt{\nu_h} \|\nabla_h\begin{pmatrix}
    \tilde{u}_\varepsilon^h \\ \varepsilon \tilde{u}_\varepsilon^v
\end{pmatrix}\|_{\tilde{L}_T^2(\mathcal{B}^{0,1/2})} \hfill\cr\hfill +\sqrt{2\varepsilon^{\gamma-2}}\|\partial_z\begin{pmatrix}
    \tilde{u}_\varepsilon^h \\ \varepsilon \tilde{u}_\varepsilon^v
\end{pmatrix}\|_{\tilde{L}_T^2(\mathcal{B}^{0,1/2})}\cr \hfill \leq \|\Delta_q^v \begin{pmatrix}
    \tilde{u}_{\varepsilon,0}^h \\ \varepsilon \tilde{u}_{\varepsilon,0}^v
\end{pmatrix}\|_{L^2}^2+ C \|\begin{pmatrix}
    \tilde{u}_\varepsilon^h \\ \varepsilon \tilde{u}_\varepsilon^v
\end{pmatrix}\|_{\tilde{L}_T^\infty(\mathcal{B}^{0,1/2})}^{1/2} \|\nabla_h \begin{pmatrix}
    \tilde{u}_\varepsilon^h \\ \varepsilon \tilde{u}_\varepsilon^v
\end{pmatrix}\|_{\tilde{L}_T^2(\mathcal{B}^{0,1/2})}\cr\hfill+\frac{C}{\sqrt{\nu_h}}\|\begin{pmatrix}
    \tilde{u}_\varepsilon^h \\ \varepsilon \tilde{u}_\varepsilon^v
\end{pmatrix}\|_{\tilde{L}_T^\infty(\mathcal{B}^{0,1/2})} \|\nabla_h \overline{u}_\varepsilon^h\|_{\tilde{L}_T^2(\mathcal{B}^{0,1/2})}. }$$

Using the same ideas as for the primitive equations from the inequality \eqref{étape de calcul},  we can deduce the estimate \eqref{estimée a priori NS anisotrope} by a standard bootstrap argument.

\subsection{Existence theorem for the system without vertical mean value}
By Friedrichs' method, which we have already used in the proof of the theorem \ref{Caractère bien-posé équations primitives} and which can be found in \cite{BCD}, we deduce Theorem \ref{Caractère bien posé NS}.

\subsection{Uniqueness}
Uniqueness follows from the following stability result:
\begin{lemma}\label{lemme d'unicité NS}~\\
    Let \( u_{\varepsilon,1} \) and \( u_{\varepsilon,2} \) be two divergence-free vector fields such that \( (u_{\varepsilon,1}^h,\varepsilon u_{\varepsilon,1}^v) \) and \( (u_{\varepsilon,2}^h,\varepsilon u_{\varepsilon,2}^v) \) belong to \( \mathcal{C}([0,T];H^{0,1/2}) \), and such that $(\nabla_h (u_{\varepsilon,1}^h,\varepsilon u_{\varepsilon,1}^v)$ and \( \nabla_h (u_{\varepsilon,2}^h,\varepsilon u_{\varepsilon,2}^v) \) belong to \( L_T^2(H^{0,1/2}) \). Let \( w_\varepsilon \) be a divergence-free vector field such that \( (w_{\varepsilon}^h,\varepsilon w_{\varepsilon}^v) \in \mathcal{C}([0,T];H^{0,1/2}) \) and \( \nabla_h (w_{\varepsilon}^h,\varepsilon w_{\varepsilon}^v) \in L_T^2(H^{0,1/2}) \), satisfying the equation:
    $$
    \left\{
    \begin{array}{l}
         \partial_t \begin{pmatrix}
             w_\varepsilon^h \\ \varepsilon w_\varepsilon^v
         \end{pmatrix}
         + u_{\varepsilon,1}\cdot \nabla \begin{pmatrix}
              w_\varepsilon^h \\ \varepsilon w_\varepsilon^v
         \end{pmatrix}
         + w_\varepsilon \cdot \nabla \begin{pmatrix}
              u_{\varepsilon,2}^h \\ \varepsilon u_{\varepsilon,2}^v
         \end{pmatrix}
         - \Delta_h \begin{pmatrix}
             w_\varepsilon^h \\ \varepsilon w_\varepsilon^v
         \end{pmatrix} \\
         \qquad\qquad - \varepsilon^{\gamma-2}\partial_z^2 \begin{pmatrix}
             w_\varepsilon^h \\ \varepsilon w_\varepsilon^v
         \end{pmatrix}
         = -\nabla_\varepsilon p, \\
         \\
         \dive w_\varepsilon = 0.
    \end{array}
    \right.
    $$

Then, for all \( 0 < t < T \), we have the estimate:
\begin{multline}\label{Unicité NS lemme}
\|(w_{\varepsilon}^h,\varepsilon w_{\varepsilon}^v)(t)\|_{H^{0,-1/2}}^2
\leq \|(w_{\varepsilon}^h,\varepsilon w_{\varepsilon}^v)(0)\|_{H^{0,-1/2}}^2 \\ +C \int_0^t f(\tau) \|(w_{\varepsilon}^h,\varepsilon w_{\varepsilon}^v)\|_{H^{0,-1/2}}^2
\times \ln\left(1+e+\frac{1}{\|(w_{\varepsilon}^h,\varepsilon w_{\varepsilon}^v)\|_{H^{0,1/2}}^2}\right) \\
\times \ln\left(\ln\left(1+e+\frac{1}{\|(w_{\varepsilon}^h,\varepsilon w_{\varepsilon}^v)\|_{H^{0,1/2}}^2}\right)\right)d\tau,
\end{multline}
where \( f \) is the time-locally integrable function defined by
\begin{equation}\label{fonction unicité NS}
\begin{aligned}
f(t) = \big(1 + \|(u_{\varepsilon,1}^h,\varepsilon u_{\varepsilon,1}^v)(t)\|_{H^{0,1/2}}^2 + \|(u_{\varepsilon,2}^h,\varepsilon u_{\varepsilon,2}^v)(t)\|_{H^{0,1/2}}^2 \\
+ \|w^h(t)\|_{H^{0,1/2}}^2\big)
\times \big(1 + \|\nabla_h (u_{\varepsilon,1}^h,\varepsilon u_{\varepsilon,1}^v)(t)\|_{H^{0,1/2}}^2 \\
+ \|\nabla_h (u_{\varepsilon,2}^h,\varepsilon u_{\varepsilon,2}^v)(t)\|_{H^{0,1/2}}^2 + \|\nabla_h (w_{\varepsilon}^h,\varepsilon w_{\varepsilon}^v)\|_{H^{0,1/2}}^2\big).
\end{aligned}
\end{equation}
\end{lemma}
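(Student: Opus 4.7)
The plan is to follow closely the proof of Lemma~\ref{lemme d'unicité}, treating $(w_\varepsilon^h, \varepsilon w_\varepsilon^v)$ as the unknown and inheriting all four sub-lemmas established in the primitive equations section. I would start by applying $\Delta_q^v$ to the equation satisfied by $(w_\varepsilon^h, \varepsilon w_\varepsilon^v)$, taking the $L^2$ scalar product with $\Delta_q^v(w_\varepsilon^h, \varepsilon w_\varepsilon^v)$, multiplying by $2^{-q}$ and summing over $q\geq -1$, then integrating in time. The pressure term $\nabla_\varepsilon p$ disappears by integration by parts, since $\dive_\varepsilon (w_\varepsilon^h,\varepsilon w_\varepsilon^v)=\dive w_\varepsilon =0$ and $\Delta_q^v$ commutes with spatial derivatives. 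The anisotropic vertical dissipation $-\varepsilon^{\gamma-2}\partial_z^2$ yields the non-negative quantity $2\varepsilon^{\gamma-2}\|\partial_z \Delta_q^v(w_\varepsilon^h, \varepsilon w_\varepsilon^v)\|_{L^2}^2$ on the left-hand side, which I simply discard since it provides no help for the double-logarithmic estimate and creates no obstruction.

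Next I would handle the convection contributions $u_{\varepsilon,1}\cdot\nabla(w_\varepsilon^h,\varepsilon w_\varepsilon^v)$ and $w_\varepsilon\cdot\nabla(u_{\varepsilon,2}^h,\varepsilon u_{\varepsilon,2}^v)$ componentwise, splitting each transport as $a^h\cdot\nabla_h b + a^v\partial_z b$. The horizontal transport pieces fall directly under Lemmas~\ref{Premier sous-lemme d'unicité} and~\ref{Deuxième sous-lemme d'unicité}, applied with $a$ being the corresponding component of $u_{\varepsilon,1}^h$ or $(u_{\varepsilon,2}^h,\varepsilon u_{\varepsilon,2}^v)$ and $b$ the corresponding component of $(w_\varepsilon^h,\varepsilon w_\varepsilon^v)$. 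The vertical transport by $u_{\varepsilon,1}^v$ is treated by Lemma~\ref{Troisième sous-lemme d'unicité}, the key ingredient being $\partial_z u_{\varepsilon,1}^v = -\dive_h u_{\varepsilon,1}^h$ coming from incompressibility. Observe that the vertical Poincaré--Wirtinger inequality (Lemma~\ref{Poincaré vertical}) remains available since $w_\varepsilon^v$ is odd in $z$ and the domain is unchanged.

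The main difficulty, exactly as in the primitive case, lies in estimating $w_\varepsilon^v\partial_z(u_{\varepsilon,2}^h,\varepsilon u_{\varepsilon,2}^v)$, where $w_\varepsilon^v$ carries no direct dissipation. I would apply Lemma~\ref{Quatrième sous-lemme d'unicité} twice: once with $a=u_{\varepsilon,2}^h$, $b=w_\varepsilon$, tested against $\Delta_q^v w_\varepsilon^h$; and then with $a=\varepsilon u_{\varepsilon,2}^v$, $b=w_\varepsilon$, tested against $\Delta_q^v(\varepsilon w_\varepsilon^v)$. The delicate point here is that Lemma~\ref{Quatrième sous-lemme d'unicité} is formulated with $a$ horizontal and the test against the horizontal component $b^h$; for the second application I would either verify that the statement extends unchanged to $a$ scalar tested against $b^v$ (the proof only uses the divergence-free structure of $b$, the oddness of $b^v$, and the vertical Littlewood--Paley decomposition of $a$, so no structural change is required), or convert $\partial_z(\varepsilon u_{\varepsilon,2}^v)=-\varepsilon\dive_h u_{\varepsilon,2}^h$ by incompressibility, reducing the contribution to a horizontal transport already controlled by Lemma~\ref{Deuxième sous-lemme d'unicité}. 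In this second route the factor $\varepsilon$ is absorbed naturally into the norm $\|\varepsilon u_{\varepsilon,2}^v\|_{H^{0,1/2}}$ that appears in $f$, ensuring the estimate is uniform in $\varepsilon$.

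Collecting the four families of estimates, absorbing the fractions of $\nu_h\|\nabla_h(w_\varepsilon^h,\varepsilon w_\varepsilon^v)\|_{H^{0,-1/2}}^2$ on the left-hand side, and identifying $f$ as in \eqref{fonction unicité NS} yields the claimed inequality \eqref{Unicité NS lemme}. The time-integrability of $f$ follows from the assumed regularity $(u_{\varepsilon,i}^h,\varepsilon u_{\varepsilon,i}^v)\in\mathcal{C}([0,T];H^{0,1/2})$ with $\nabla_h(u_{\varepsilon,i}^h,\varepsilon u_{\varepsilon,i}^v)\in L_T^2(H^{0,1/2})$ and the triangle inequality applied to $w_\varepsilon = u_{\varepsilon,2}-u_{\varepsilon,1}$. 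Uniqueness then follows by Osgood's lemma applied to $\phi(t):=\|(w_\varepsilon^h,\varepsilon w_\varepsilon^v)(t)\|_{H^{0,-1/2}}^2$ with modulus of continuity $\mu(r)=r(1-\ln r)\ln(1-\ln r)$, exactly as in the previous section. I expect no new obstruction specific to $\varepsilon>0$: the entire argument is uniform in $\varepsilon$ provided one systematically works with the paired unknown $(\cdot^h,\varepsilon\cdot^v)$.
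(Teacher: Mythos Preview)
Your proposal is correct and follows essentially the same approach as the paper: energy estimate in $H^{0,-1/2}$, pressure eliminated via $\dive_\varepsilon$, positive vertical dissipation dropped, and the four sub-lemmas of Section~2 applied componentwise to the paired unknown $(w_\varepsilon^h,\varepsilon w_\varepsilon^v)$. The paper takes your primary route for the term $w_\varepsilon^v\partial_z(\varepsilon u_{\varepsilon,2}^v)$, namely rerunning the argument of Lemma~\ref{Quatrième sous-lemme d'unicité} with scalar $a=\varepsilon u_{\varepsilon,2}^v$ and scalar test function $\varepsilon w_\varepsilon^v$, rather than your alternative incompressibility reduction.
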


\begin{proof}
     By energy method, we have: $$\displaylines{\frac{1}{2}\frac{d}{dt}\|(w_{\varepsilon}^h,\varepsilon w_{\varepsilon}^v)\|_{H^{0,-1/2}}^2+\nu_h\|\nabla_h (w_{\varepsilon}^h,\varepsilon w_{\varepsilon}^v)(t)\|_{H^{0,-1/2}}^2 \hfill\cr +\varepsilon^{\gamma-2}\|\partial_z (w_{\varepsilon}^h,\varepsilon w_{\varepsilon}^v)(t)\|_{H^{0,-1/2}}^2 \cr\hfill \leq \sum_{q\geq -1}2^{-q}\int \Delta_q^v (u_{\varepsilon,1}\cdot \nabla \begin{pmatrix}
              w_\varepsilon^h \\ \varepsilon w_\varepsilon^v
         \end{pmatrix})\Delta_q^v \begin{pmatrix}
              w_\varepsilon^h \\ \varepsilon w_\varepsilon^v
         \end{pmatrix} dx \hfill\cr\hfill +\sum_{q\geq -1}2^{-q}\int \Delta_q^v(w_\varepsilon\cdot\nabla \begin{pmatrix}
              u_{\varepsilon,2}^h \\ \varepsilon u_{\varepsilon,2}^v
         \end{pmatrix})\Delta_q^v\begin{pmatrix}
              w_\varepsilon^h \\ \varepsilon w_\varepsilon^v
         \end{pmatrix} dx.}$$

         Let us estimate the terms on the right-hand side one by one.

         By taking $a=u_{\varepsilon,1}^h$ and $b=\begin{pmatrix}
             w_\varepsilon^h \\ \varepsilon w_\varepsilon^v
         \end{pmatrix}$ in Lemma \ref{Premier sous-lemme d'unicité}, we get: 
         \begin{multline}\label{Unicité NS 1}
             \sum_{q\geq -1}2^{-q}\int \Delta_q^v(u_{\varepsilon,1}^h\cdot\nabla_h \begin{pmatrix}
             w_\varepsilon^h \\ \varepsilon w_\varepsilon^v
         \end{pmatrix})\Delta_q^v \begin{pmatrix}
             w_\varepsilon^h \\ \varepsilon w_\varepsilon^v
         \end{pmatrix} \ dx \\ \leq Cf(t)(1-\ln \|(w_\varepsilon^h, \varepsilon w_\varepsilon^v)\|_{H^{0,-1/2}}^2)(1+\ln(\|(w_\varepsilon^h, \varepsilon w_\varepsilon^v)\|_{H^{0,-1/2}}^2))\hfill\cr\hfill+\frac{\nu_h}{20}\|\nabla_h (w_\varepsilon^h, \varepsilon w_\varepsilon^v)\|_{H^{0,-1/2}}^2.
         \end{multline}

         By Lemma \ref{Deuxième sous-lemme d'unicité} with $a=u_{\varepsilon,2}^h$ and $b=w_\varepsilon^h$, we have : 
$$\displaylines{\sum_{q\geq -1}2^{-q}\int \Delta_q^v(w_\varepsilon^h\cdot\nabla_h u_{\varepsilon,2}^h)\Delta_q^v w_\varepsilon^h dx\leq C f(t)(1-\ln \|w_\varepsilon^h\|_{H^{0,-1/2}}^2)\|w_\varepsilon^h\|_{H^{0,-1/2}}^2 \hfill\cr\hfill+\frac{\nu_h}{20}\|\nabla_h w_\varepsilon^h\|_{H^{0,-1/2}}^2.}$$
         By proceeding analogously to the proof of the lemma \ref{Deuxième sous-lemme d'unicité} for $$\sum_{q\geq -1}2^{-q}\int \Delta_q^v(w_\varepsilon^h\cdot\nabla_h \varepsilon u_{\varepsilon,2})\Delta_q^v (\varepsilon w_\varepsilon^v)dx,$$ we have the following inequality :
         \begin{multline}\label{Unicité NS 2}
             \sum_{q\geq -1}2^{-q}\int \Delta_q^v(w_\varepsilon\cdot\nabla \begin{pmatrix} u_{\varepsilon,2}^h \\ \varepsilon u_{\varepsilon,2}^v \end{pmatrix})\Delta_q^v \begin{pmatrix} w_\varepsilon^h \\ \varepsilon w_\varepsilon^v \end{pmatrix} dx
             \\ \leq C f(t)(1-\ln \|(w_\varepsilon^h, \varepsilon w_\varepsilon^v)\|_{H^{0,-1/2}}^2)\|(w_\varepsilon^h, \varepsilon w_\varepsilon^v)\|_{H^{0,-1/2}}^2 \\ +\frac{\nu_h}{20}\|\nabla_h (w_\varepsilon^h, \varepsilon w_\varepsilon^v)\|_{H^{0,-1/2}}^2.
         \end{multline}

    Taking $a=u_{\varepsilon,1}$ and $b=\begin{pmatrix}
        w_{\varepsilon}^h \\ \varepsilon w_{\varepsilon}^v
    \end{pmatrix}$ in Lemma \ref{Troisième sous-lemme d'unicité} leads to:
    \begin{multline}\label{Unicité NS 3}
        \sum_{q\geq -1}2^{-q}\int \Delta_q^v(u_{\varepsilon,1}^v\partial_z \begin{pmatrix}
        w_{\varepsilon}^h \\ \varepsilon w_{\varepsilon}^v
    \end{pmatrix})\Delta_q^v \begin{pmatrix}
        w_{\varepsilon}^h \\ \varepsilon w_{\varepsilon}^v
    \end{pmatrix} dx \\ \leq Cf(t)(1-\ln \|(w_{\varepsilon}^h,\varepsilon w_{\varepsilon}^v)\|_{H^{0,-1/2}}^2)+\frac{2\nu_h}{25}\|\nabla_h (w_{\varepsilon}^h,\varepsilon w_{\varepsilon}^v)\|_{H^{0,-1/2}}^2.
    \end{multline}

    By Lemma \ref{Quatrième sous-lemme d'unicité}, with $a=u_{\varepsilon,2}^h$ and $b=w_\varepsilon$, we get: $$\displaylines{\sum_{q\geq -1}2^{-q}\int \Delta_q^v(w_\varepsilon^v \partial_z u_{\varepsilon,2}^h)\Delta_q^v w_\varepsilon^h dx \hfill\cr\hfill \leq Cf(t)(1-\ln \|w_\varepsilon^h(t)\|_{H^{0,-1/2}}^2)\ln(1-\ln \|w_\varepsilon^h(t)\|_{H^{0,-1/2}}^2)\|w_\varepsilon^h\|_{H^{0,-1/2}}^2 \hfill\cr\hfill +\frac{3\nu_h}{25}\|\nabla_h w_\varepsilon^h\|_{H^{0,-1/2}}^2.}$$ 

    For the term $$\sum_{q\geq -1}2^{-q}\int \Delta_q^v(w_\varepsilon^v \partial_z \varepsilon u_{\varepsilon,2}^v)\Delta_q^v \varepsilon w_\varepsilon^v dx, $$ we proceed in a similar way to the proof of Lemma \ref{Quatrième sous-lemme d'unicité} (here $w_\varepsilon^v$ is not a field of dimension 2 but a scalar), and we obtain: 
    \begin{multline}\label{Unicité NS 4}
        \sum_{q\geq -1}2^{-q}\int \Delta_q^v(w_\varepsilon^v \partial_z \begin{pmatrix} u_{\varepsilon,2}^h \\ \varepsilon u_{\varepsilon,2}^v\end{pmatrix})\Delta_q^v \begin{pmatrix} w_\varepsilon^h \\ \varepsilon w_\varepsilon^v \end{pmatrix} dx \hfill\cr\hfill \leq Cf(t)(1-\ln \|(w_\varepsilon^h,\varepsilon w_\varepsilon^v)(t)\|_{H^{0,-1/2}}^2)\ln(1-\ln \|(w_\varepsilon^h,\varepsilon w_\varepsilon^v)(t)\|_{H^{0,-1/2}}^2)\hfill\cr\hfill\times\|(w_\varepsilon^h,\varepsilon w_\varepsilon^v)\|_{H^{0,-1/2}}^2 +\frac{3\nu_h}{25}\|\nabla_h(w_\varepsilon^h,\varepsilon w_\varepsilon^v)\|_{H^{0,-1/2}}^2.
    \end{multline}

    By summing up the inequalities \eqref{Unicité NS 1}, \eqref{Unicité NS 2}, \eqref{Unicité NS 3} and \eqref{Unicité NS 4}, we obtain the inequality \eqref{Unicité NS lemme}.
\end{proof}

Let $u_{\varepsilon,1}$ and $u_{\varepsilon,2}$ be two divergence-free vector fields with the same initial condition such that $(u_{\varepsilon,1}^h,\varepsilon u_{\varepsilon,1}^v)$ and $(u_{\varepsilon,2}^h,\varepsilon u_{\varepsilon,2}^v)$ belong to the space $L_T^\infty(H^{0,1/2})$ and such that $\nabla_h (u_{\varepsilon,1}^h,\varepsilon u_{\varepsilon,1}^v)$ and $\nabla_h (u_{\varepsilon,2}^h,\varepsilon u_{\varepsilon,2}^v)$ belong to the space $L_T^2(H^{0,1/2})$. By setting $w_\varepsilon=u_{\varepsilon,2}-u_{\varepsilon,1}$, we have that $u_{\varepsilon,1}$, $u_{\varepsilon,2}$ and $w_\varepsilon$ verify the assumptions of Lemma \ref{lemme d'unicité NS}.

By definition of $w_\varepsilon$, we have $$\|(w_{\varepsilon}^h,\varepsilon w_\varepsilon^v)\|_{L_T^\infty(H^{0,1/2})}\leq \|(u_{\varepsilon,1}^h,\varepsilon u_{\varepsilon,1}^v)\|_{L_T^\infty(H^{0,1/2})}+\|(u_{\varepsilon,2}^h,\varepsilon u_{\varepsilon,2}^v)\|_{L_T^\infty(H^{0,1/2})},$$ and also $$\displaylines{\|\nabla_h (w_{\varepsilon}^h,\varepsilon w_{\varepsilon}^v)\|_{L_T^2(H^{0,1/2})}\leq \|\nabla_h (u_{\varepsilon,1}^h,\varepsilon u_{\varepsilon,1}^v)\|_{L_T^2(H^{0,1/2})} \hfill\cr\hfill+\|\nabla_h (u_{\varepsilon,2}^h,\varepsilon u_{\varepsilon,2}^v)\|_{L_T^2(H^{0,1/2})}.}$$ We therefore obtain that $f$ (defined in \eqref{fonction unicité NS}) is a locally integrable function.
In addition, we can see that we have the following continuity relationship between the solution and the initial data:
$$\displaylines{\|(u_{\varepsilon,1}^h,\varepsilon u_{\varepsilon,1})(t)-(u_{\varepsilon,2}^h,\varepsilon u_{\varepsilon,2})(t)\|_{H^{0,-1/2}}^2 \hfill\cr\hfill \leq C \exp\left(-(-\ln \|(u_{\varepsilon,1}^h,\varepsilon u_{\varepsilon,1})(0)-(u_{\varepsilon,2}^h,\varepsilon u_{\varepsilon,2})(0)\|_{H^{0,-1/2}}^2)^{\exp(\int_0^t f(s)ds)}\right).}$$ Osgood Lemma \ref{Osgood} with $\rho(s)\mathrel{\mathop:}=\|(w_{\varepsilon}^h,\varepsilon w_{\varepsilon})(s)\|_{H^{0,-1/2}}^2$ and $\mu(r)\mathrel{\mathop:}=r(1-\ln r)\ln(1-\ln r)$, implies uniqueness. 

\section{Convergence}
We now prove Theorem \ref{théorème de convergence}.

From the a priori estimates \eqref{estimée a priori NS anisotrope}, we in particular have:
\begin{itemize}
    \item[$\bullet$] $(u_\varepsilon^h,\varepsilon u_\varepsilon^v)_\varepsilon$ is a bounded sequence in $L^\infty(\R_+;\mathcal{B}^{0,1/2})$, hence also in $L^\infty(\R_+;L^2)$.
    \item[$\bullet$] $(\nabla_h(u_\varepsilon^h,\varepsilon u_\varepsilon^v))_\varepsilon$ is a bounded sequence in $L^2(\R_+;\mathcal{B}^{0,1/2})$, so $(u_\varepsilon^h, \varepsilon u_\varepsilon^v)_\varepsilon$ is bounded in $L^2(\R_+; H^\eta)$ for any $\eta<1/2$.
\end{itemize}

We also have that $(\partial_t(u_\varepsilon^h, \varepsilon u_\varepsilon^v))_\varepsilon$ is a bounded sequence in the space $L^2(\R_+; H^{-3})$, using the rescaled equation \eqref{NS remise à l'échelle}. Indeed,
\begin{itemize}
    \item[$\bullet$] $-\nu_h \Delta_h \begin{pmatrix}
        u_\varepsilon^h \\ \varepsilon u_\varepsilon^v
    \end{pmatrix}-\varepsilon^{\gamma-2} \partial_z^2 \begin{pmatrix}
         u_\varepsilon^h \\ \varepsilon u_\varepsilon^v
    \end{pmatrix}$ belongs to $L^2(\R_+;H^{-2})$ uniformly in $\varepsilon$ (using also the information that the sequence
    $$\left(-\varepsilon^{\gamma-2} \partial_z^2 \begin{pmatrix}
         u_\varepsilon^h \\ \varepsilon u_\varepsilon^v
    \end{pmatrix}\right)_\varepsilon$$
    is bounded in $L^2(\R_+;\mathcal{B}^{0,1/2})$, provided by the a priori estimates \eqref{estimée a priori NS anisotrope});
    
    \item[$\bullet$] Since we have $u_\varepsilon \cdot\nabla \begin{pmatrix}
        u_\varepsilon^h \\ \varepsilon u_\varepsilon^v
    \end{pmatrix}=\dive(u_\varepsilon\otimes \begin{pmatrix}
        u_\varepsilon^h \\ \varepsilon u_\varepsilon^v
    \end{pmatrix})$ and since $L^1(\Omega)$ continuously embeds into $H^{-s}(\Omega)$ for $s>3/2$, we obtain:
    \begin{align*}
        \|u_\varepsilon\cdot \nabla \begin{pmatrix}
        u_\varepsilon^h \\ \varepsilon u_\varepsilon^v
    \end{pmatrix}\|_{L_t^2(H_x^{-3})}  & \leq \|u_\varepsilon\otimes \begin{pmatrix}
        u_\varepsilon^h \\ \varepsilon u_\varepsilon^v
    \end{pmatrix}\|_{L_t^2(H_x^{-2})} \\ & \leq C \|u_\varepsilon\otimes \begin{pmatrix}
        u_\varepsilon^h \\ \varepsilon u_\varepsilon^v
    \end{pmatrix}\|_{L_t^2(L_x^1)} \\ & \leq C \|u_\varepsilon\|_{L_t^2(L_x^2)}\|(u_\varepsilon^h,\varepsilon u_\varepsilon^v)\|_{L_t^\infty(L_x^2)}.
    \end{align*}
    Since $(u_\varepsilon^h,\varepsilon u_\varepsilon^v)$ is bounded in
    $$L^\infty(\R_+;L^2)\cap L^2(\R_+; H^{-\eta}) \quad \text{with} \ \eta<1/2,$$
    and using the inequality
    $$\|u_\varepsilon^v\|_{L_t^2(L_x^2)}\leq \|\partial_z u_\varepsilon^v\|_{L_t^2(L_x^2)}\leq \|\nabla_h u_\varepsilon^h\|_{L_t^2(L_x^2)}$$
    (from the vertical Poincaré inequality \ref{Poincaré vertical} and the divergence-free condition),
    we conclude that $\left(u_\varepsilon\cdot \begin{pmatrix}
        u_\varepsilon^h \\ \varepsilon u_\varepsilon^v
    \end{pmatrix}\right)_\varepsilon$ is a bounded sequence in $L^2(\R_+;H^{-3})$.
    
    \item[$\bullet$] We have the identity $\nabla_\varepsilon p_\varepsilon=\nabla_\varepsilon(-\Delta)^{-1}\dive_\varepsilon(u_\varepsilon\cdot\nabla)\begin{pmatrix}
        u_\varepsilon^h \\ \varepsilon u_\varepsilon^v
    \end{pmatrix}$, which shows (by the same arguments as above) that $(\nabla_\varepsilon p_\varepsilon)_\varepsilon$ is a bounded sequence in $L^2(\R_+; H^{-3})$.
\end{itemize}

Therefore, by the Aubin–Lions lemma, $(u_\varepsilon^h)$ converges (up to a subsequence) strongly to some $u^h$ in $\mathcal{C}_{loc}(\R_+;H_{loc}^{-3})$.

Moreover, $(\varepsilon u_\varepsilon^v)_\varepsilon$ converges strongly to $0$ in $L^2(\R_+;L^2)$ since, using the Poincaré inequality \ref{Poincaré vertical} and the divergence-free condition, we have
$$\|\varepsilon u_\varepsilon^v\|_{L_t^2(L_x^2)}\leq \varepsilon \|\partial_z u_\varepsilon^v\|_{L_t^2(L_x^2)} \leq  \varepsilon \|\nabla_h u_\varepsilon^h\|_{L_t^2(L_x^2)}\underset{\varepsilon\to 0}{\longrightarrow}0.$$

Because $\left(\begin{pmatrix}
    u_\varepsilon^h \\ \varepsilon u_\varepsilon^v
\end{pmatrix}\right)_\varepsilon$ is bounded in $L^2(H^\eta)$ for all $\eta<1/2$, interpolation yields that $\begin{pmatrix}
    u_\varepsilon^h \\ \varepsilon u_\varepsilon^v
\end{pmatrix}$ converges to $\begin{pmatrix}
    u^h \\ 0
\end{pmatrix}$ in $L_{loc}^2(H_{loc}^\eta)$.

Additionally, we know that the sequence $(u_\varepsilon^v)_\varepsilon$ converges weakly to some $u^v$ in $L^2(\R_+; L^2)$ due to the inequality $\|u_\varepsilon^v\|_{L_t^2(L_x^2)}\leq \|\nabla_h u_\varepsilon^h\|_{L_t^2(L_x^2)}$.

In particular, we have in the sense of distributions:
$$u_\varepsilon\cdot\nabla \begin{pmatrix}
    u_\varepsilon^h \\ \varepsilon u_\varepsilon^v
\end{pmatrix}=\dive\left(u_\varepsilon\otimes \begin{pmatrix}
    u_\varepsilon^h \\ \varepsilon u_\varepsilon^v
\end{pmatrix}\right)\underset{\varepsilon\to 0}{\longrightarrow}\dive\left(u\otimes\begin{pmatrix}
    u^h \\ 0
\end{pmatrix} \right)=u\cdot \nabla\begin{pmatrix}
    u^h \\ 0
\end{pmatrix}.$$
\\
Indeed, $(u_\varepsilon)_\varepsilon$ converges weakly to $u$ in $L_{loc}^2(\R_+;L^2)$, $(u_\varepsilon^h)_\varepsilon$ converges strongly to $u^h$ in $L_{loc}^2(H_{loc}^\eta)$, and $(\varepsilon u_\varepsilon^v)_\varepsilon$ converges strongly to $0$ in $L^2(\R_+;L^2)$.

We thus conclude that the limit $u$ (which is unique due to the uniqueness of solutions to the primitive equations) satisfies the primitive equations \eqref{Equations primitives}, which proves the result.

\appendix
\section{Anisotropic Littlewood-Paley theory}\label{Théorie de Littlewood-Paley anisotrope}
Here we recall some elements of the anisotropic Littlewood-Paley theory from \cite{Paicu1}.

First, recall the definition of anisotropic Lebesgue spaces. Let $L_h^p(L_v^q)$ denote the space of functions defined on $\Omega$ that belong to $L^p(\Omega_h;L^q([-1,1])$, with the norm $$\|f\|_{L_h^p(L_v^q)}\mathrel{\mathop:}=\| \ \|f(x_h,\cdot)\|_{L^q([-1,1])}\|_{L^p(\Omega_h)} \quad \text{for } \ 1 \leq p,q\leq \infty .$$

Similarly, we denote $L_v^q(L_h^p)$ the space $L^q([-1,1]; L^p(\Omega_h))$ with the associated norm $\|f\|_{L_v^q(L_h^p)}\mathrel{\mathop:}=\| \ \|f(\cdot,x_v)\|_{L^p(\Omega_h)}\|_{L^q([-1,1])}$. 

The following lemma shows the importance of the order of integration:
\begin{lemma}\label{ordre intégration}
    Let be $1 \leq p\leq q\leq \infty$ and $f\in L^p(X_1;L^q(X_2))$ where $(X_1,d\mu_1)$ and $(X_2,d\mu_2)$ are measurable spaces. Then $f\in L^q(X_2;L^p(X_1))$ with continuous embedding: $$\|f\|_{L^q(X_2;L^p(X_1))}\leq \|f\|_{L^p(X_1;L^q(X_2))}.$$ 
\end{lemma}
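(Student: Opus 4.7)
The plan is to reduce the inequality to the classical Minkowski integral inequality (in the form asserting, for $r\geq 1$, that the $L^r$ norm of an integral is bounded by the integral of the $L^r$ norms). The key observation is that raising both sides of the claimed inequality to the $p$-th power converts it into a statement directly amenable to this tool, via the substitution $r := q/p \geq 1$.

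First I would dispatch the degenerate cases separately. When $p = q$, Tonelli--Fubini gives equality of the two norms. When $q = \infty$, the monotonicity of the $L^\infty$ norm gives $\|f(\cdot,x_2)\|_{L^p(X_1)} \leq \bigl\| \|f(x_1,\cdot)\|_{L^\infty(X_2)} \bigr\|_{L^p(X_1)}$ for a.e.\ $x_2$, and taking the $L^\infty(X_2)$ norm on the left concludes. When $p = \infty$, the hypothesis forces $q = \infty$ and both sides coincide.

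For the main case $1 \leq p < q < \infty$, I set $r := q/p > 1$ and rewrite
\[
\|f\|_{L^q(X_2;L^p(X_1))}^{p} = \left\| \int_{X_1} |f(x_1,\cdot)|^{p}\, d\mu_1(x_1) \right\|_{L^{r}(X_2)}.
\]
Applying the classical Minkowski integral inequality with exponent $r \geq 1$, I pull the $L^r(X_2)$ norm inside the $X_1$-integral to obtain the upper bound
\[
\int_{X_1} \bigl\| |f(x_1,\cdot)|^{p} \bigr\|_{L^{r}(X_2)}\, d\mu_1(x_1) = \int_{X_1} \|f(x_1,\cdot)\|_{L^{q}(X_2)}^{p}\, d\mu_1(x_1),
\]
where I have used the identity $pr = q$. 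The right-hand side is exactly $\|f\|_{L^p(X_1;L^q(X_2))}^{p}$, and taking the $p$-th root closes the argument.

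There is no substantial obstacle here: the statement is the generalized Minkowski inequality, and the content of the proof lies entirely in noticing the exponent rearrangement $r = q/p$, which is valid precisely because of the hypothesis $p \leq q$. The only minor care needed is to handle the endpoints $p = \infty$ or $q = \infty$ by inspection, since the rearrangement argument above requires $p, q$ finite.
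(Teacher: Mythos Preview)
Your proof is correct: the reduction to Minkowski's integral inequality via the exponent $r=q/p$ is the standard argument, and your handling of the endpoint cases is clean. Note that the paper does not actually supply a proof of this lemma---it is merely recalled as a known fact in the appendix---so there is nothing to compare against.
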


Within the framework of anisotropic Littlewood-Paley theory, the truncation operators with respect to the vertical variable are defined as follows $$\begin{array}{ll}
     \displaystyle\Delta_q^v u=\sum_{n\in\Z^3}\mathcal{F}u_n \phi(\frac{|\check{n}_3|}{2^q})e^{i\pi \check{n}\cdot x}; & \text{for} \ q\geq 0 \\
     \displaystyle\Delta_{-1}^v u=\sum_{n\in\Z^3} \mathcal{F }u_n \chi(|\check{n}_3|)e^{i\pi \check{n}\cdot x} & \\
      \displaystyle\Delta_q^v u=0; & \text{for} \ q\geq -2 
\end{array}$$
where $u\in \mathcal{D}'(\Omega_1)$, $\check{n}=(\frac{n_1}{2},\frac{n_2}{2},\frac{n_3}{2})$. The functions $\phi$ and $\chi$ generate a dyadic partition of the unit in $\R$: they are regular, satisfy $$\supp \ \chi\subset [-4/3;4/3], \quad \supp \ \phi \subset \mathcal{C}(3/4,8/3)\mathrel{\mathop:}=\{\xi\in\R,\:  3/4\leq|\xi|\leq 8/3\},$$
and $$\chi(t)+\sum_{q\geq 0}\phi(2^{-q}t)=1.$$

We can generalise this definition to the case of $\Omega_2$. In this context, we define : $$\Delta_q^v u(x^h,x^v)=\sum_{n\in\Z}\mathcal{F}_{h}^{-1}(\varphi(\cdot,2^{-q}n)\mathcal{F}_h u)(x^h,x^v)\times \widehat{u}_n e^{i\pi n x^v}, \quad \text{for} \ q\geq 0$$ where  $\displaystyle\widehat{u}_n=\frac{1}{2} \int_{-1}^1 e^{-i \pi nz}u(x^h,z)dz$ and $\mathcal{F}_h$ is the Fourier transform in the horizontal variable $x^h$.

Let us also define the operator: $$S_q^v a\mathrel{\mathop:}=\sum_{q'\leq q-1}\dot\Delta_{q'}^v a.$$

The following lemma shows Bernstein-type inequalities and Gagliardo-Nirenberg-type estimates:

\begin{lemma}\label{Bernstein et Gagliardo-Nirenberg}
    Let $u$ be a function with $\supp \mathcal{F}^v u\subset2^q\mathcal{C}$, where we have noted $\mathcal{F}^v$ the Fourier transform in the vertical variable and where $\mathcal{C}=[-b;-a]\cup[a;b]$ with $0<a<b$. Let $p\geq 1$ and $r\geq r'\geq 1$ be two real numbers. We have : $$\begin{array}{l}
         2^{qk}C^{-k}\|u\|_{L_h^p(L_v^r)}\leq \|\partial_v^k u\|_{L_h^p(L_v^r)}\leq 2^{qk}C^k\|u\|_{L_h^p(L_v^r)}; \\
          2^{qk}C^{-k}\|u\|_{L_v^r(L_h^p)}\leq \|\partial_v^k u\|_{L_v^r(L_h^p)}\leq 2^{qk}C^k\|u\|_{L_v^r(L_h^p)};
    \end{array}$$
    $$\begin{array}{l}
         \|u\|_{L_h^p(L_v^r)}\leq C2^{q(\frac{1}{r'}-\frac{1}{r})}\|u\|_{L_h^p(L_v^{r'})};  \\
          \|u\|_{L_v^r(L_h^p)}\leq C2^{q(\frac{1}{r'}-\frac{1}{r})}\|u\|_{L_v^{r'}(L_h^p)}. 
    \end{array}$$

    For the case of $\Omega_2$, we have the "classical" Gagliardo-Nirenberg inequality:
\begin{equation}\label{classique GN}\|u\|_{L^2([-1,1];L^4(\Omega_h))}\leq \|u\|_{L^2(\Omega)}^{1/2}\|\nabla_h u\|_{L^2(\Omega)}^{1/2}.\end{equation}
    
    For the case of the three-dimensional torus $\Omega_1$, the previous inequality  \eqref{classique GN} remains true assuming $u$ of zero horizontal mean value.

More generally, we have the following inequality : \begin{equation}\label{inégalité de Gagliardo-Nirenberg généralisée}
    \|u\|_{L^2([-1,1];L^4(\Omega_h))}\leq (\|u\|_{L^2(\Omega)}^{1/2}\|\nabla_h u\|_{L^2(\Omega)}^{1/2}+\|u\|_{L^2(\Omega)}).
\end{equation}
    
    Moreover, if $u$ is such that the support of $\mathcal{F}^v u$ is included in $2^q \mathcal{C}$, then we have :
    $$\|u\|_{L^4(\Omega_h;L^\infty([-1,1]))}\leq 2^{q/2}\|u\|_{L^2(\Omega)}^{1/2}\|\nabla_h u\|_{L^2(\Omega)}^{1/2}.$$
\end{lemma}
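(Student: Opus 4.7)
The statement bundles three families of anisotropic inequalities: vertical Bernstein estimates in mixed Lebesgue norms, planar Gagliardo--Nirenberg inequalities lifted in the vertical variable, and a combined Bernstein--Gagliardo--Nirenberg estimate for vertically frequency-localized functions. The plan is to prove them in this order, since each step reuses the previous ones, and to handle the torus $\Omega_1$ by isolating the horizontal mean value.

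\textbf{Bernstein inequalities.} The key observation is that, thanks to $\supp \mathcal{F}^v u \subset 2^q\mathcal{C}$, one can write $u = \Phi_q \ast_v u$ (convolution in the vertical variable only) with $\Phi_q(z) := 2^q \Phi(2^q z)$ for a fixed Schwartz function $\Phi$ whose Fourier transform equals $1$ on $\mathcal{C}$. Young's inequality applied in $z$ alone, followed by integration in $x_h$, gives both the inclusion $\|u\|_{L_h^p(L_v^r)} \leq C 2^{q(1/r'-1/r)}\|u\|_{L_h^p(L_v^{r'})}$ and its companion in $L_v^r(L_h^p)$; the order of integration is inert here because the convolution does not touch the horizontal variables. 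The derivative bound $\|\partial_v^k u\|_{L_h^p(L_v^r)} \leq 2^{qk}C^k \|u\|_{L_h^p(L_v^r)}$ follows from $\partial_v^k u = (\partial_v^k \Phi_q) \ast_v u$ combined with $\|\partial_v^k \Phi_q\|_{L^1(\R)} \leq C^k 2^{qk}$, and the reverse inequality is obtained by reconstructing $u$ from $\partial_v^k u$ via a Schwartz symbol equal to $(i\xi_v)^{-k}$ on $\mathcal{C}$.

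\textbf{Gagliardo--Nirenberg inequalities.} For the strip $\Omega_2$, inequality \eqref{classique GN} follows from applying the planar two-dimensional Gagliardo--Nirenberg inequality $\|u(\cdot,z)\|_{L^4(\Omega_h)}^2 \leq C \|u(\cdot,z)\|_{L^2(\Omega_h)}\|\nabla_h u(\cdot,z)\|_{L^2(\Omega_h)}$ at fixed $z$ and integrating in $z$ via Cauchy--Schwarz. On the torus $\Omega_1$ the same argument requires a vanishing horizontal mean, since Poincaré is needed in the 2D GN step; so for general $u$ I would decompose $u = \overline{u} + (u - \overline{u})$ with $\overline{u}(z) := |\Omega_h|^{-1}\int_{\Omega_h}u(\cdot,z)\,dx_h$, apply the zero-mean case to the fluctuation, and control the mean directly via $\|\overline{u}\|_{L_v^2(L_h^4)} = C\|\overline{u}\|_{L_v^2} \leq C\|u\|_{L^2(\Omega)}$. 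This accounts for the additive $\|u\|_{L^2}$ term in \eqref{inégalité de Gagliardo-Nirenberg généralisée}.

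\textbf{Final estimate.} The last inequality chains three ingredients: first, the vertical Bernstein estimate just proved yields $\|u\|_{L_h^4(L_v^\infty)} \leq C 2^{q/2}\|u\|_{L_h^4(L_v^2)}$; second, Lemma \ref{ordre intégration} (applied with $p=2 \leq 4 = q$) gives $\|u\|_{L_h^4(L_v^2)} \leq \|u\|_{L_v^2(L_h^4)}$; third, the Gagliardo--Nirenberg inequality \eqref{inégalité de Gagliardo-Nirenberg généralisée} concludes. On $\Omega_2$ the additive $\|u\|_{L^2}$ term does not appear, yielding the stated clean form; on $\Omega_1$ one observes that the horizontal mean $\overline{u}(z)$ inherits the vertical spectral support in $2^q\mathcal{C}$ with $q \geq 0$, so vertical Bernstein forces $\|\overline{u}\|_{L^2(\Omega)} \leq C 2^{-q}\|\partial_v \overline{u}\|_{L^2}$, which can be absorbed against $\|u\|_{L^2}^{1/2}\|\nabla_h u\|_{L^2}^{1/2}$ using the divergence-free structure available in applications. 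I expect the main bookkeeping obstacle to lie in the torus mean-value management inside the Gagliardo--Nirenberg step, while the Bernstein part and the final chaining are essentially formal once the convolution kernel $\Phi_q$ is fixed.
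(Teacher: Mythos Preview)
The paper does not actually prove this lemma: it is stated in the appendix as a catalogue of known anisotropic Bernstein and Gagliardo--Nirenberg inequalities, and no argument is given. Your outline is the standard route (vertical convolution kernel for Bernstein, slice-by-slice two-dimensional Gagliardo--Nirenberg, then the chain Bernstein $\to$ Lemma~\ref{ordre intégration} $\to$ \eqref{classique GN} for the last estimate), and it is correct.

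One point does need fixing. For the final inequality on the torus $\Omega_1$, you attempt to dispose of the horizontal mean $\overline{u}(z)$ by writing $\|\overline{u}\|_{L^2}\leq C2^{-q}\|\partial_v\overline{u}\|_{L^2}$ and then appealing to ``the divergence-free structure available in applications.'' This is not a proof of the lemma as stated: $u$ there is an arbitrary scalar function, and the inequality is simply false on $\Omega_1$ without a mean-zero hypothesis (take $u(x_h,z)=e^{i\pi m z}$ with $|m|\sim 2^q$: the right-hand side vanishes while the left does not). The intended reading, which the paper makes explicit only in Corollary~\ref{coro Bernstein-Gagliardo-Nirenberg}, is that on $\Omega_1$ the last inequality carries the same zero-horizontal-mean assumption as \eqref{classique GN}. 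Under that assumption your three-step chain goes through cleanly, and no divergence-free argument is needed or available.
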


\begin{coro}\label{coro Bernstein-Gagliardo-Nirenberg}
    We have the inequalities: $$\begin{array}{l}
         \|u\|_{L_h^2(L_v^\infty)}\leq C\|u\|_{\mathcal{B}^{0,1/2}} \\ \|u\|_{L_h^4(L_v^\infty)}\leq C \|u\|_{\mathcal{B}^{0,1/2}}^{1/2} \|\nabla_h u\|_{\mathcal{B}^{0,1/2}}^{1/2}.
          
    \end{array}$$
    For periodic functions, in the last inequality, $u$ is assumed to have zero horizontal mean value.
\end{coro}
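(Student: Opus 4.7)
The plan is to establish each inequality by dyadic decomposition in the vertical variable, applying the scale-by-scale estimates from Lemma~\ref{Bernstein et Gagliardo-Nirenberg} to each block $\Delta_q^v u$, and then summing up using the very definition $\|u\|_{\mathcal{B}^{0,1/2}} = \sum_{q \geq -1} 2^{q/2}\|\Delta_q^v u\|_{L^2}$.

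For the first inequality, I would write $u = \sum_{q \geq -1}\Delta_q^v u$ and apply the triangle inequality in $L_h^2(L_v^\infty)$, yielding
\[
\|u\|_{L_h^2(L_v^\infty)} \leq \sum_{q \geq -1}\|\Delta_q^v u\|_{L_h^2(L_v^\infty)}.
\]
Since $\Delta_q^v u$ has vertical Fourier support contained in $2^q\mathcal{C}$, the vertical Bernstein inequality from Lemma~\ref{Bernstein et Gagliardo-Nirenberg} applied with $r = \infty$, $r' = 2$ (inside the outer horizontal $L_h^2$-norm) gives $\|\Delta_q^v u\|_{L_h^2(L_v^\infty)} \leq C\, 2^{q/2}\|\Delta_q^v u\|_{L_h^2(L_v^2)} = C\, 2^{q/2}\|\Delta_q^v u\|_{L^2}$. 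Summing over $q$ yields exactly $C\|u\|_{\mathcal{B}^{0,1/2}}$.

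For the second inequality, I apply the same decomposition and triangle inequality in $L_h^4(L_v^\infty)$. Then I use the last estimate of Lemma~\ref{Bernstein et Gagliardo-Nirenberg} applied to each dyadic block $\Delta_q^v u$ (whose vertical Fourier support lies in $2^q\mathcal{C}$):
\[
\|\Delta_q^v u\|_{L_h^4(L_v^\infty)} \leq C\,2^{q/2}\|\Delta_q^v u\|_{L^2}^{1/2}\|\nabla_h \Delta_q^v u\|_{L^2}^{1/2}.
\]
Writing the right-hand side as $(2^{q/2}\|\Delta_q^v u\|_{L^2})^{1/2}(2^{q/2}\|\nabla_h \Delta_q^v u\|_{L^2})^{1/2}$ and summing over $q$ using the Cauchy--Schwarz inequality $\sum_q \alpha_q^{1/2}\beta_q^{1/2} \leq (\sum_q \alpha_q)^{1/2}(\sum_q \beta_q)^{1/2}$ produces the factor $\|u\|_{\mathcal{B}^{0,1/2}}^{1/2}\|\nabla_h u\|_{\mathcal{B}^{0,1/2}}^{1/2}$, as desired.

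The only subtle point, and the reason for the additional zero-horizontal-mean hypothesis in the periodic case, is that the block-level Gagliardo--Nirenberg inequality of Lemma~\ref{Bernstein et Gagliardo-Nirenberg} on $\Omega_1$ requires the zero horizontal mean value to hold. Since the operator $\Delta_q^v$ acts only in the vertical variable, it preserves the horizontal mean, so the assumption on $u$ transfers automatically to each $\Delta_q^v u$; once this is noted, the argument goes through identically in both geometries. There is no serious obstacle here: the whole proof is essentially bookkeeping of dyadic estimates, and the main step is simply recognizing that the exponent $2^{q/2}$ produced by Bernstein (respectively by the anisotropic Gagliardo--Nirenberg inequality) matches the weight used to define the $\mathcal{B}^{0,1/2}$ norm.
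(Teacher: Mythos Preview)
Your argument is correct and is precisely the intended one: the paper states this result as an immediate corollary of Lemma~\ref{Bernstein et Gagliardo-Nirenberg} without writing out a proof, and the implicit derivation is exactly the dyadic decomposition, block-wise Bernstein/Gagliardo--Nirenberg estimate, and summation (with Cauchy--Schwarz for the second inequality) that you carry out. Your remark that $\Delta_q^v$ preserves the zero horizontal mean, so the hypothesis transfers to each block, is the only point worth making explicit, and you handle it correctly.
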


This appendix ends with a reminder of the paradifferential anisotropic calculus.
We note $$T_u w\mathrel{\mathop:}=\sum_{q'\leq q-2}\Delta_{q'}^v u\dot \Delta_q^v w=\sum_q S_{q-1}^v u\cdot \Delta_q^v w$$ and $$R(u,w)\mathrel{\mathop:}=\sum_q \sum_{i\in\{0,\pm 1\}}\Delta_q^v u \Delta_{q+i}^v w.$$

We then have $$u\cdot w=T_u w+T_w u+R(u,w). $$ Given that the support of $\mathcal{F}(S_{q-1}^w u\Delta_q^v w)$ is included in a certain ring $2^q \mathcal{C}$ and taking into account the fact that the support of $\mathcal{F}(S_{q+1}^v w\Delta_q^v u)$ is included in an interval $2^q B$ where $B=[-c, c]$ with $c>0$, we deduce the following quasi-orthogonality properties
\begin{equation}\label{Quasi-orthogonalité}\begin{array}{l}
     \Delta_q^v (S_{q'-1}^v u \Delta_{q'}^v w)=0 \ \text{for} \ |q'-q|\geq 5 \ \text{and also} \\
      \Delta_q^v (S_{q'+1}^v u \Delta_{q'}^v w)=0 \ \text{for} \ q'\leq q-4.
\end{array}\end{equation}

\begin{lemma}\label{estimée de commutateur}
    For $p\geq 1$, $r\geq 1$, $s\geq 1$ with $\frac{1}{p}=\frac{1}{r}+\frac{1}{s}$, we have the following inequality : 
    $$\|[\Delta_q^v, u]w\|_{L_v^2(L_h^p)}\leq C 2^{-q}\|\partial_z u\|_{L_v^\infty(L_h^r)}\|w\|_{L_v^2(L_h^s)}.$$
\end{lemma}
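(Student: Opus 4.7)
The plan is to represent $\Delta_q^v$ as a convolution operator in the vertical variable and exploit a first-order Taylor expansion of $u$ to absorb one derivative in $z$. Concretely, for $q\geq 0$ the operator $\Delta_q^v$ acts in the vertical direction as convolution against a rescaled kernel $h_q(z)=2^q h(2^q z)$, where $h$ is a fixed Schwartz function obtained by inverse Fourier transform of $\phi$ (and analogously for $\Delta_{-1}^v$ with $\chi$). The commutator may then be written pointwise as
\[
[\Delta_q^v,u]w(x_h,x_v)=\int h_q(x_v-y_v)\bigl(u(x_h,y_v)-u(x_h,x_v)\bigr)w(x_h,y_v)\,dy_v.
\]

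The key step is the identity
\[
u(x_h,y_v)-u(x_h,x_v)=(y_v-x_v)\int_0^1\partial_z u\bigl(x_h,x_v+t(y_v-x_v)\bigr)\,dt,
\]
which converts the difference $u(x_h,y_v)-u(x_h,x_v)$ into a factor of $|y_v-x_v|$ times a value of $\partial_z u$. Inserting this expression, taking the $L_h^p$ norm at fixed $x_v$, and applying Hölder with $\tfrac{1}{p}=\tfrac{1}{r}+\tfrac{1}{s}$ to the integrand yields, after bounding the $\partial_z u$ factor by $\|\partial_z u\|_{L_v^\infty(L_h^r)}$ uniformly in the intermediate vertical point,
\[
\|[\Delta_q^v,u]w(\cdot,x_v)\|_{L_h^p}\leq \|\partial_z u\|_{L_v^\infty(L_h^r)}\int |h_q(x_v-y_v)|\,|y_v-x_v|\,\|w(\cdot,y_v)\|_{L_h^s}\,dy_v.
\]

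The right-hand side is a convolution on $\R_v$ between the scalar kernel $k_q(z):=|z||h_q(z)|$ and the function $y_v\mapsto\|w(\cdot,y_v)\|_{L_h^s}$. A direct scaling computation gives $\|k_q\|_{L^1(\R)}=2^{-q}\int|z||h(z)|\,dz=C\,2^{-q}$, which is precisely where the $2^{-q}$ gain comes from. Applying Young's convolution inequality with $L^1\ast L^2\hookrightarrow L^2$ in the vertical variable then closes the estimate:
\[
\|[\Delta_q^v,u]w\|_{L_v^2(L_h^p)}\leq \|k_q\|_{L^1}\,\|\partial_z u\|_{L_v^\infty(L_h^r)}\,\|w\|_{L_v^2(L_h^s)}\leq C\,2^{-q}\,\|\partial_z u\|_{L_v^\infty(L_h^r)}\,\|w\|_{L_v^2(L_h^s)}.
\]
The only technical point requiring care is the periodic setting: for $\Omega_1$ (and for the vertical torus in $\Omega_2$) the kernel $h_q$ should be understood as its periodized version, but since $h$ is Schwartz the periodic sum is still $L^1$ with a weighted norm of the same order $2^{-q}$, so the argument goes through verbatim. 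I expect no genuine obstacle here: the whole lemma is essentially the classical Coifman--Meyer type commutator estimate applied one-dimensionally in the vertical variable, with the $L_h^p$--$L_h^r$--$L_h^s$ Hölder splitting taking place before the vertical convolution estimate.
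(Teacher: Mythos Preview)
The paper states this lemma without proof (it is a standard commutator estimate recalled in the appendix), so there is no proof in the paper to compare against. Your argument is correct and is precisely the classical one: write $\Delta_q^v$ as vertical convolution against $h_q=2^q h(2^q\cdot)$, use the first-order Taylor formula to convert the difference $u(\cdot,y_v)-u(\cdot,x_v)$ into $(y_v-x_v)$ times a value of $\partial_z u$, apply the horizontal H\"older inequality with exponents $(r,s)$, and finish with Young's inequality in $L^2_v$ against the kernel $|z||h_q(z)|$, whose $L^1$ norm scales like $2^{-q}$. Your remark on the periodic case is also accurate: since $h$ is Schwartz, the periodized kernel satisfies the same weighted $L^1$ bound up to a harmless $O(2^{-qN})$ correction from the tails.
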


We recall here a Sobolev injection lemma whose proof in the whole space comes from \cite{BCD} :
\begin{lemma}
    Let $s\in ]0,\frac{d}{2}[$. There is a constant $C_d$ depending on the dimension such that :
    \begin{equation}\label{injection de Sobolev espace entier}\|u\|_{L^p(\R^d)}\leq C_d \sqrt{p} \|u\|_{\dot H^s(\R^d)} \quad \text{with} \  p=\frac{2d}{d-2s}.\end{equation}

    In the case of the torus, we have:
    \begin{equation}\label{injection de Sobolev tore}\|u\|_{L^p(\mathcal{T}^d)}\leq C_d \sqrt{p} \|u\|_{H^s(\mathcal{T}^d)} \quad \text{with} \  p=\frac{2d}{d-2s}.\end{equation}
\end{lemma}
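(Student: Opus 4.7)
The plan is to combine three classical ingredients: a dyadic Bernstein estimate, the Littlewood-Paley square function inequality with its sharp $p$-dependence, and Minkowski's inequality in $L^{p/2}$.

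First, decompose $u = \sum_q \dot\Delta_q u$ isotropically. By the isotropic analogue of the Bernstein inequality (Lemma \ref{Bernstein et Gagliardo-Nirenberg}),
$$\|\dot\Delta_q u\|_{L^p(\R^d)} \le C \, 2^{qd(\frac{1}{2} - \frac{1}{p})} \|\dot\Delta_q u\|_{L^2} = C\,2^{qs} \|\dot\Delta_q u\|_{L^2},$$
since the choice $p = 2d/(d - 2s)$ is exactly calibrated so that $d(\tfrac{1}{2} - \tfrac{1}{p}) = s$. Next, for $2 \le p < \infty$, the Littlewood-Paley square function inequality in its sharp form gives
$$\|u\|_{L^p(\R^d)} \le C\sqrt{p} \, \Big\| \Big(\sum_q |\dot\Delta_q u|^2\Big)^{1/2} \Big\|_{L^p}.$$
This is the only place where the $\sqrt{p}$ factor is generated. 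It is classical: one can derive it either from the vector-valued Mihlin-H\"ormander multiplier theorem applied to the family of Fourier projectors $(\dot\Delta_q)_q$, or equivalently from Burkholder's martingale square function inequality, whose constants grow like $\sqrt{p}$.

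To exploit the two bounds above, apply Minkowski's inequality in $L^{p/2}$ (allowed since $p \ge 2$):
$$\Big\| \Big(\sum_q |\dot\Delta_q u|^2\Big)^{1/2} \Big\|_{L^p}^2 = \Big\|\sum_q |\dot\Delta_q u|^2\Big\|_{L^{p/2}} \le \sum_q \|\dot\Delta_q u\|_{L^p}^2 \le C \sum_q 2^{2qs} \|\dot\Delta_q u\|_{L^2}^2 \le C \|u\|_{\dot H^s}^2,$$
using Bernstein in the penultimate step and the Plancherel-type identity $\sum_q 2^{2qs} \|\dot\Delta_q u\|_{L^2}^2 \simeq \|u\|_{\dot H^s}^2$ in the last. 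Chaining everything yields $\|u\|_{L^p(\R^d)} \le C_d \sqrt{p}\, \|u\|_{\dot H^s}$. For the torus, split $u = \bar u + (u - \bar u)$ where $\bar u$ denotes the mean: the oscillating part $u - \bar u$ is handled as above with the periodic blocks $(\Delta_q)_{q \ge -1}$, giving $C\sqrt{p}\,\|u\|_{H^s}$; the constant mode contributes $\|\bar u\|_{L^p(\mathcal{T}^d)} \le C|\bar u| \le C\|u\|_{L^2} \le C\|u\|_{H^s}$ by H\"older.

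The main obstacle is the \emph{sharp} $\sqrt{p}$ dependence. A naive triangle inequality $\|u\|_{L^p} \le \sum_q \|\dot\Delta_q u\|_{L^p}$ would produce an $\ell^1$-sum and hence only yield the stronger-norm embedding $B^s_{2,1} \hookrightarrow L^p$, losing the $\ell^2$ structure of $\dot H^s = B^s_{2,2}$. The upgrade from an $\ell^1$-sum to an $\ell^2$-sum at the cost of a $\sqrt{p}$ factor is precisely the content of the Littlewood-Paley square function inequality, and the $p$-dependent constant in that inequality is the sole source of the $\sqrt{p}$ in the statement. In a detailed write-up I would import this estimate directly from \cite{BCD} rather than re-prove it, and simply orchestrate the three steps above.
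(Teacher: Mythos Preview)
Your argument has a genuine gap at the square-function step. The reverse Littlewood--Paley inequality you invoke,
\[
\|u\|_{L^p} \le C\sqrt{p}\,\Big\|\Big(\textstyle\sum_q |\dot\Delta_q u|^2\Big)^{1/2}\Big\|_{L^p},
\]
does \emph{not} hold with a $\sqrt{p}$ constant, and neither of your justifications delivers it. Burkholder's sharp martingale inequality in this direction reads $\|f\|_p\le (p-1)\|Sf\|_p$ for $p\ge 2$ (linear in $p$, and this is attained), while the vector-valued Mihlin--H\"ormander route goes through duality and the $L^{p'}$ bound for the square function, whose Calder\'on--Zygmund constant blows up like $(p'-1)^{-1}\sim p$ as $p'\to 1$. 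So your chain produces at best $\|u\|_{L^p}\le Cp\,\|u\|_{\dot H^s}$, missing the sharp $\sqrt{p}$ that the lemma asserts. The Khintchine/Rademacher heuristic you have in mind gives $\sqrt p$ only in the forward direction $\|Sf\|_p\lesssim\sqrt p\,\|f\|_p$, which is the wrong one here.

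The paper does not prove this lemma at all; it simply imports it from \cite{BCD}. The proof there is \emph{not} via the square function but via a low/high frequency splitting and the layer-cake formula: one writes $u=u_{<A}+u_{\ge A}$, uses Bernstein to make $\|u_{<A}\|_{L^\infty}<\lambda$ for a suitable $A=A(\lambda)$, controls $|\{|u_{\ge A}|>\lambda\}|$ by Chebyshev in $L^2$, and then integrates $\int_0^\infty p\lambda^{p-1}|\{|u|>\lambda\}|\,d\lambda$. The $\sqrt{p}$ emerges from this integral after optimizing $A(\lambda)$, with no appeal to square-function bounds. If you want a self-contained argument rather than a citation, that is the route to take; your Bernstein and Minkowski steps are fine, but the square-function bridge between them cannot carry the weight you put on it.
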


The following lemma has been useful for uniqueness:
\begin{lemma}\label{lemme Sobolev}
    There is a constant $C$, such that we have the following inequality for $u$ with zero horizontal mean value
    \begin{equation}\label{Inégalité de Gagliardo-Nirenberg sans moyenne}\|u\|_{L^{2p}(\Omega_h)}\leq C\sqrt{p}\|u\|_{L^2(\Omega_h)}^{1/p}\|\nabla_h u\|_{L^2(\Omega_h)}^{1-1/p} \quad \text{for all} \ 1\leq p<+\infty.\end{equation}

    More generally, we have the following inequality for the torus case:
    \begin{equation}\label{Inégalité Gagliardo-Nirenberg avec mooyenne}\|u\|_{L^{2p}(\Omega_h)}\leq C\sqrt{p}(\|u\|_{L^2(\Omega_h)}^{1/p}\|\nabla_h u\|_{L^2(\Omega_h)}^{1-1/p}+\|u\|_{L^2(\Omega_h)})\end{equation} \text{for all}  $1\leq p<+\infty$.
\end{lemma}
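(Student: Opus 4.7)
The plan is to derive both inequalities from the scalar-valued Sobolev embedding \eqref{injection de Sobolev tore} (respectively \eqref{injection de Sobolev espace entier} for $\Omega_h = \R^2$) combined with a standard Fourier-based interpolation between $L^2$ and $H^1$, and then to reduce the general torus version \eqref{Inégalité Gagliardo-Nirenberg avec mooyenne} to the zero-mean case \eqref{Inégalité de Gagliardo-Nirenberg sans moyenne} by subtracting off the (constant) horizontal mean.

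For \eqref{Inégalité de Gagliardo-Nirenberg sans moyenne}, I would apply \eqref{injection de Sobolev tore} with $d = 2$ and the choice $s = 1 - 1/p$, since then $2d/(d-2s) = 4/(2/p) = 2p$, giving
$$\|u\|_{L^{2p}(\Omega_h)} \leq C\sqrt{p}\,\|u\|_{H^{1-1/p}(\Omega_h)}.$$
I would then interpolate using the Fourier representation: Hölder's inequality applied to
$$\sum_k (1+|k|^2)^s |\hat u_k|^2 = \sum_k \bigl((1+|k|^2)|\hat u_k|^2\bigr)^s \bigl(|\hat u_k|^2\bigr)^{1-s}$$
with exponents $1/s$ and $1/(1-s)$ yields $\|u\|_{H^s} \leq \|u\|_{L^2}^{1-s}\|u\|_{H^1}^{s}$ with a universal constant. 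Choosing $s = 1 - 1/p$ produces $\|u\|_{L^2}^{1/p}\|u\|_{H^1}^{1-1/p}$. Finally, since $u$ has zero horizontal mean, the two-dimensional Poincaré--Wirtinger inequality gives $\|u\|_{H^1} \leq C\|\nabla_h u\|_{L^2}$, which delivers \eqref{Inégalité de Gagliardo-Nirenberg sans moyenne}.

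For \eqref{Inégalité Gagliardo-Nirenberg avec mooyenne}, I would split $u = \bar u + \tilde u$, where $\bar u := |\Omega_h|^{-1}\int_{\Omega_h} u\, dx_h$ is the (constant) horizontal mean on the torus and $\tilde u := u - \bar u$ has zero horizontal mean. The triangle inequality gives $\|u\|_{L^{2p}(\Omega_h)} \leq \|\bar u\|_{L^{2p}(\Omega_h)} + \|\tilde u\|_{L^{2p}(\Omega_h)}$. For the first piece, $\bar u$ being constant on a torus of finite measure yields $\|\bar u\|_{L^{2p}(\Omega_h)} = |\Omega_h|^{1/(2p)}|\bar u| \leq C\|u\|_{L^2(\Omega_h)}$ uniformly in $p$. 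To the second piece I apply \eqref{Inégalité de Gagliardo-Nirenberg sans moyenne}, then use $\|\tilde u\|_{L^2} \leq \|u\|_{L^2}$ and $\nabla_h \tilde u = \nabla_h u$ to conclude.

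The argument is essentially routine; the only point that truly deserves attention is the $\sqrt{p}$ dependence of the constant. It enters exclusively through \eqref{injection de Sobolev tore}, whereas the interpolation step contributes only a universal constant (the Hölder exponents cancel out), so the $\sqrt{p}$ factor inherited from the Sobolev embedding propagates cleanly through both inequalities. One also checks that $|\Omega_h|^{1/(2p)}$ stays bounded as $p \to \infty$, which is immediate since it tends to $1$.
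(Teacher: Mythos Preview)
Your proof is correct and follows exactly the route the paper implicitly suggests: the lemma is stated immediately after the Sobolev embedding \eqref{injection de Sobolev tore} with its explicit $\sqrt{p}$ constant, and the paper gives no separate proof, so combining that embedding with the elementary $H^s$--interpolation and Poincar\'e (then subtracting the mean for the general case) is precisely the intended argument. One cosmetic remark: when you invoke \eqref{injection de Sobolev tore} with Lebesgue exponent $2p$, the constant that appears is $\sqrt{2p}$ rather than $\sqrt{p}$, but the extra $\sqrt{2}$ is of course absorbed into $C$; and the endpoint $p=1$ (where $s=0$ falls outside the range of \eqref{injection de Sobolev tore}) is trivial anyway.
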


\section{}

Here we recall two Poincaré-Wirtinger inequalities and Osgood's lemma.
\begin{lemma}\label{Poincaré horizontal} Let $f\in \mathcal{C}_0^\infty(\mathcal{T}^3)$, we have the existence of a constant $C$ such that : $$\|f(x_h,\cdot)-\frac{1}{4}\int_{\mathcal{T}_h^2} f(x_h',\cdot) \ dx_h'\|_{L^p(\mathcal{T}^3)}\leq C \|\nabla_h f\|_{L^p(\mathcal{T}^3)}.$$
    Furthermore, if $f$ has zero horizontal mean value, then we have $$\|f\|_{L^p(\mathcal{T}^3)}\leq C \|\nabla_h f\|_{L^p(\mathcal{T}^3)}.$$ 
\end{lemma}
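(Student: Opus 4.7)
The strategy is to reduce the three-dimensional inequality to the classical two-dimensional Poincaré--Wirtinger inequality on the torus by treating the vertical variable $x_v$ as a parameter. For each fixed $x_v \in (-1,1)$, the slice $f(\cdot, x_v)$ is a smooth periodic function on $\mathcal{T}_h^2 = (-1,1)^2$, and its horizontal mean is exactly $\bar{f}(x_v) := \frac{1}{4}\int_{\mathcal{T}_h^2} f(x_h', x_v)\, dx_h'$.

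The first step is to invoke (or briefly establish) the standard two-dimensional Poincaré--Wirtinger inequality on the torus: there exists $C = C(p)$ such that for every smooth periodic $g$ on $\mathcal{T}_h^2$,
\[
\|g - \bar{g}\|_{L^p(\mathcal{T}_h^2)} \leq C\,\|\nabla_h g\|_{L^p(\mathcal{T}_h^2)},
\]
where $\bar g$ is the mean of $g$. This is classical: for $p = 2$ it follows directly from the Fourier expansion, since subtracting the mean eliminates the zero-frequency mode and the remaining modes $(n_1,n_2) \neq 0$ satisfy $|(n_1,n_2)| \geq \pi/2$ so that $\|g - \bar g\|_{L^2} \leq \frac{2}{\pi}\|\nabla_h g\|_{L^2}$; for general $1 \leq p \leq \infty$ it follows from the usual compactness-contradiction argument on the compact manifold $\mathcal{T}_h^2$.

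Applying this slicewise with $g = f(\cdot, x_v)$, raising to the $p$-th power, and integrating over $x_v \in (-1,1)$ yields
\[
\int_{-1}^{1}\!\|f(\cdot,x_v) - \bar f(x_v)\|_{L^p(\mathcal{T}_h^2)}^p \, dx_v \;\leq\; C^p \int_{-1}^{1}\!\|\nabla_h f(\cdot,x_v)\|_{L^p(\mathcal{T}_h^2)}^p \, dx_v.
\]
By Fubini, both sides are exactly $p$-th powers of $L^p(\mathcal{T}^3)$ norms, and extracting the $p$-th root gives the first claimed inequality. By density of $\mathcal{C}^\infty$ in the relevant function spaces, this extends beyond the smooth class if needed.

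The second statement is then an immediate consequence: if $f$ has zero horizontal mean for every $x_v$, then $\bar f(x_v) \equiv 0$ and the first inequality specializes to $\|f\|_{L^p(\mathcal{T}^3)} \leq C\,\|\nabla_h f\|_{L^p(\mathcal{T}^3)}$. There is no real obstacle here; the only ``hard part'' is the underlying 2D Poincaré--Wirtinger constant, which is standard. The key observation making everything work is that $\nabla_h$ commutes with integration in $x_v$, so the reduction to 2D is exact and no cross-variable interaction needs to be controlled.
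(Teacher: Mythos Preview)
Your argument is correct. The paper states this lemma in an appendix as a recalled standard result and does not supply a proof, so there is nothing to compare against; your slicewise reduction to the classical 2D Poincar\'e--Wirtinger inequality followed by integration in $x_v$ is exactly the expected justification. One tiny remark: your integration step ``raise to the $p$-th power and integrate in $x_v$'' tacitly assumes $p<\infty$; for $p=\infty$ you would simply take the supremum over $x_v$ instead, which gives the same conclusion.
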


\begin{lemma}\label{Poincaré vertical} Let $f\in \mathcal{C}_0^\infty(\Omega)$, we have : $$\|f(\cdot,z)-\frac{1}{2}\int_{-1}^1 f(\cdot, z') \ dz'\|_{L^p(\Omega)}\leq 2 \|\partial_z f\|_{L^p(\Omega)}.$$
    Moreover, if $f$ is odd with respect to the vertical variable, then we have $$\|f\|_{L^p(\Omega)}\leq 2 \|\partial_z f\|_{L^p(\Omega)}.$$ 
\end{lemma}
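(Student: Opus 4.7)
The plan is to reduce the statement to the classical one-dimensional Poincaré--Wirtinger inequality on the interval $(-1,1)$ by treating the horizontal variable $x_h$ as a parameter and then integrating via Fubini. First I would fix $x_h \in \Omega_h$ and consider the vertical slice $g(z) \mathrel{\mathop:}= f(x_h,z) \in \mathcal{C}^\infty([-1,1])$, together with its vertical mean $\bar g \mathrel{\mathop:}= \tfrac{1}{2}\int_{-1}^1 g(z')\,dz'$.

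The core one-dimensional estimate starts from the identity
\[
g(z) - \bar g \;=\; \frac{1}{2}\int_{-1}^1 \bigl(g(z) - g(z')\bigr)\,dz' \;=\; \frac{1}{2}\int_{-1}^1 \int_{z'}^{z} g'(s)\,ds\,dz'.
\]
Bounding the inner integral crudely by $\|g'\|_{L^1(-1,1)}$ yields the pointwise estimate $|g(z)-\bar g| \leq \|g'\|_{L^1(-1,1)}$. Taking the $L^p$ norm over $(-1,1)$ (an interval of length $2$) gives $\|g-\bar g\|_{L^p(-1,1)} \leq 2^{1/p}\|g'\|_{L^1(-1,1)}$, and a straightforward application of Hölder's inequality on the same interval provides $\|g'\|_{L^1(-1,1)} \leq 2^{1-1/p}\|g'\|_{L^p(-1,1)}$. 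The two factors of $2$ then combine exactly to give the announced constant $2$, independently of $p$.

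Next I would apply this slicewise estimate for each $x_h \in \Omega_h$, raise to the $p$-th power and integrate by Fubini:
\[
\|f - \bar f_v\|_{L^p(\Omega)}^p \;=\; \int_{\Omega_h}\|g(x_h,\cdot)-\bar g(x_h)\|_{L^p(-1,1)}^p \,dx_h \;\leq\; 2^p \|\partial_z f\|_{L^p(\Omega)}^p,
\]
where $\bar f_v(x_h) := \tfrac{1}{2}\int_{-1}^1 f(x_h,z')\,dz'$. This proves the first inequality. For the second statement, if $f$ is odd with respect to the vertical variable, then for every $x_h$ one has $\bar f_v(x_h) = \tfrac{1}{2}\int_{-1}^1 f(x_h,z')\,dz' = 0$ by symmetry, so $f = f - \bar f_v$ pointwise and the Poincaré inequality follows directly from the Poincaré--Wirtinger one.

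No serious obstacle is expected here: the argument is essentially one-dimensional, and the sharp constant $2$ emerges from the pairing of the trivial $L^\infty \hookrightarrow L^p$ embedding on $(-1,1)$ with Hölder's inequality at the conjugate exponent $p'$. The only minor care is to ensure that the crude supremum bound of the double integral is combined with the correct Hölder exponent so that the constant does not deteriorate when Fubini is applied; the density of $\mathcal{C}_0^\infty(\Omega)$ in the relevant Sobolev spaces then extends the estimate to all admissible $f$.
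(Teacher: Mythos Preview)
Your argument is correct: the pointwise bound $|g(z)-\bar g|\leq \|g'\|_{L^1(-1,1)}$ combined with the $L^\infty\hookrightarrow L^p$ embedding on an interval of length~$2$ and H\"older's inequality yields exactly the constant~$2$, and Fubini transfers this slicewise estimate to $\Omega$; the odd case follows since the vertical mean vanishes. The paper itself states this lemma in the appendix as a recalled Poincar\'e--Wirtinger inequality without giving any proof, so there is nothing to compare against---your self-contained derivation fills that gap cleanly. (The closing remark about density is superfluous here, since the lemma is stated only for $f\in\mathcal{C}_0^\infty(\Omega)$.)
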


\begin{lemma}\label{Osgood} Let $\rho\geq 0$ be a measurable function, $\gamma\geq 0$ a locally integrable function and $\mu\geq 0$ an increasing continuous function that satisfies the condition
    $$\int_0^1 \frac{dr}{\mu(r)}=+\infty.$$

    Let $a$ also be a positive real number. If $\rho$ satisfies the inequality $$\rho
    (t)\leq a+\int_0^t \gamma(s)\mu(\rho(s))ds,$$

    then, either $a$ is zero and the $\rho$ function is identically zero or $a$ is non-zero and we have
    $$-\mathcal{M}(\rho(t))+\mathcal{M}(a)\leq \int_0^t \gamma(s) ds, \quad \text{with} \quad \mathcal{M}(x)=\int_x^1 \frac{dr}{\mu(r)}.$$
\end{lemma}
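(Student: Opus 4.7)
The plan is to reduce the integral inequality to a separable differential inequality by dominating $\rho$ by the auxiliary function defined by its own right-hand side, then integrating against $d\mathcal{M}$.

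First, assume $a>0$ and define
\[
R(t) := a + \int_0^t \gamma(s)\mu(\rho(s))\,ds,
\]
so that $\rho(t)\leq R(t)$ by hypothesis, and $R$ is absolutely continuous with $R'(t)=\gamma(t)\mu(\rho(t))$ almost everywhere. Since $\mu$ is increasing and $\rho\leq R$, I deduce the pointwise a.e.\ inequality $R'(t)\leq \gamma(t)\mu(R(t))$. Because $R(0)=a>0$ and $R$ is nondecreasing, $R(t)\geq a>0$ for all $t$, so $\mu(R(t))>0$ and I can divide to obtain
\[
\frac{R'(t)}{\mu(R(t))}\leq \gamma(t).
\]
Recognizing the left-hand side as $-\frac{d}{dt}\mathcal{M}(R(t))$ (where $\mathcal{M}(x)=\int_x^1 dr/\mu(r)$) and integrating over $[0,t]$ yields
\[
\mathcal{M}(a)-\mathcal{M}(R(t))\leq \int_0^t \gamma(s)\,ds.
\]
Since $\mathcal{M}$ is nonincreasing and $\rho(t)\leq R(t)$, we have $\mathcal{M}(\rho(t))\geq \mathcal{M}(R(t))$, so $\mathcal{M}(a)-\mathcal{M}(\rho(t))\leq \mathcal{M}(a)-\mathcal{M}(R(t))\leq \int_0^t \gamma(s)\,ds$, which is precisely the claimed inequality.

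For the case $a=0$, I would argue by contradiction using the divergence hypothesis $\int_0^1 dr/\mu(r)=+\infty$. Suppose $\rho(t_0)>0$ for some $t_0$. For every $\varepsilon>0$, the hypothesis still gives $\rho(t)\leq \varepsilon+\int_0^t \gamma(s)\mu(\rho(s))\,ds$, so the previous step applied with $a=\varepsilon$ yields
\[
\mathcal{M}(\varepsilon)-\mathcal{M}(\rho(t_0))\leq \int_0^{t_0}\gamma(s)\,ds.
\]
The right-hand side is a fixed finite quantity, while $\mathcal{M}(\varepsilon)\to+\infty$ as $\varepsilon\to 0^+$ by the divergence hypothesis, giving the desired contradiction and forcing $\rho\equiv 0$.

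The routine calculation is the differential manipulation; the only subtle point is justifying the a.e.\ differentiation of $R$ and the pointwise comparison $R'\leq\gamma\,\mu(R)$, which follows from Lebesgue's differentiation theorem since $s\mapsto \gamma(s)\mu(\rho(s))$ is locally integrable (being the product of a locally integrable function $\gamma$ and a measurable function bounded on the sublevel set $\{\rho\leq R(t)\}$, using continuity of $\mu$). The main obstacle, if any, is the $a=0$ case, but the approximation trick above reduces it cleanly to the positive case. No use of the smoothness of $\gamma$ beyond local integrability is needed.
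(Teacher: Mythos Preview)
Your proof is correct and is in fact the standard argument for Osgood's lemma. Note, however, that the paper does not supply a proof of this lemma at all: it is merely recalled in the appendix as a classical tool, without demonstration. Since there is no proof in the paper to compare against, it suffices to say that your argument---defining the majorant $R(t)$, deriving the separable differential inequality $R'/\mu(R)\leq\gamma$, integrating against $\mathcal{M}$, and handling the $a=0$ case by the $\varepsilon$-perturbation trick---is exactly the textbook proof (see e.g.\ Bahouri--Chemin--Danchin, \emph{Fourier Analysis and Nonlinear PDEs}, Lemma~3.4), and nothing more is expected here.
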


\section{Convection lemmas}\label{convection}
In order to prove the a priori estimates of the system  \eqref{Equations primitives sans moyenne verticale},  we will need the following first convection lemma:

\begin{lemma}\label{Premier lemme de convection}
    For the case $\Omega=\Omega_2$, there exists a constant $C$ such that for any time-dependent vector field with zero divergence $a(t)$ and scalar function $b(t)$ with $a^h$ and $b$ belonging to
    \begin{equation}\label{Espace fonctionnel E}
        E\mathrel{\mathop:}=\left\{u\in \tilde{L}_T^\infty(\mathcal{B}^{0,1/2}) \ \middle| \ \nabla_h u\in \tilde{L}_T^2(\mathcal{B}^{0,1/2}) \right\},
    \end{equation}
     we have the existence of a sequence $(c_q)_{q\geq -1}$ such that $\sum_{q\geq -1}\sqrt{c_q}\leq 1$ and $$\displaylines{\int_0^T \left|(\Delta_q^v(a\cdot\nabla b)|\Delta_q^v b)_{L^2(\Omega)}\right|dt \hfill\cr\hfill \leq C c_q 2^{-q}\bigg(\|a^h\|_{\tilde{L}_T^\infty(\mathcal{B}^{0,1/2})}^{1/2}\|\nabla_h a^h\|_{\tilde{L}_T^2(\mathcal{B}^{0,1/2})}^{1/2}\|b\|_{\tilde{L}_T^\infty(\mathcal{B}^{0,1/2})}^{1/2}\|\nabla_h b\|_{\tilde{L}_T^2(\mathcal{B}^{0,1/2})}^{3/2} \hfill\cr\hfill+\|\nabla_h a^h\|_{\tilde{L}_T^2(\mathcal{B}^{0,1/2})}\|b\|_{\tilde{L}_T^\infty(\mathcal{B}^{0,1/2})}\|\nabla_h b\|_{\tilde{L}_T^2(\mathcal{B}^{0,1/2})}\bigg).}$$ 

    For the case $\Omega=\Omega_1$, we have the same result, adding the hypothesis that $a$ and $b$ both have zero mean values on $\Omega_1$.
\end{lemma}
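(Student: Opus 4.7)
The plan is to start from the natural splitting
$$a\cdot\nabla b = a^h\cdot\nabla_h b + a^v\,\partial_z b$$
and treat each piece by Bony's vertical paraproduct decomposition $fg=T_f^v g+T_g^v f+R^v(f,g)$ localized with the vertical dyadic blocks $\Delta_q^v$. The quasi-orthogonality relations \eqref{Quasi-orthogonalité} collapse each double sum to a finite band ($|q-q'|\le 4$ for the two paraproduct pieces, $q'\ge q-N_0$ for the remainder), so that for each fixed $q$ only $O(1)$ blocks contribute.

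For the horizontal piece $a^h\cdot\nabla_h b$, I would estimate a generic paraproduct block such as $(\Delta_q^v(S_{q'-1}^v a^h\cdot\Delta_{q'}^v\nabla_h b)\mid \Delta_q^v b)_{L^2}$ by Hölder in the anisotropic scale $L_v^\infty L_h^2\cdot L_v^2 L_h^4\cdot L_v^2 L_h^4$. The first factor is bounded via Corollary \ref{coro Bernstein-Gagliardo-Nirenberg}, while the two $L_h^4$-factors are handled by the generalized horizontal Gagliardo-Nirenberg inequality \eqref{inégalité de Gagliardo-Nirenberg généralisée} (or by \eqref{classique GN} under the zero horizontal mean-value assumption in the $\Omega_1$ case). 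Cauchy-Schwarz in time and summation against the $2^{q/2}$ dyadic weights defining $\mathcal{B}^{0,1/2}$ then produce the prefactor $2^{-q}c_q$ with $(c_q)$ of generic summable square root. The two flavours of bound on the right-hand side of the lemma (one with fractional exponents $1/2,3/2$ and one with integer exponents) correspond to the interpolated term and the residual low-frequency $L^2$ term of \eqref{inégalité de Gagliardo-Nirenberg généralisée} respectively. The remainder $R^v$ is handled analogously after interchanging the roles of $q$ and $q'$ and using Bernstein to pay for the loss of vertical frequency separation.

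The delicate part is $a^v\,\partial_z b$, because the hypotheses put no direct control on $a^v$: no vertical viscosity, no Besov norm. The only handle is the incompressibility identity $\partial_z a^v=-\dive_h a^h$, which does lie in $\tilde{L}_T^2(\mathcal{B}^{0,1/2})$. Following Paicu's strategy, I would integrate by parts vertically in $(\Delta_q^v(a^v\,\partial_z b)\mid \Delta_q^v b)_{L^2}$ in order to transfer $\partial_z$ off $b$; this produces a main term of the form $(\Delta_q^v((\dive_h a^h)\,b)\mid\Delta_q^v b)_{L^2}$, which is controlled by exactly the horizontal machinery above, together with a commutator $[\Delta_q^v,a^v]\,\partial_z b$ bounded through Lemma \ref{estimée de commutateur}. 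The main obstacle is the low-frequency vertical block $S_0^v a^v$, on which $\partial_z a^v=-\dive_h a^h$ cannot be exploited in dyadic form: this is closed by applying the vertical Poincaré-Wirtinger inequality \ref{Poincaré vertical} to $a^v$ so as to trade $\|S_0^v a^v\|_{L^2_vL^r_h}$ for a quantity controlled by $\|\partial_z a^v\|=\|\dive_h a^h\|$. Summation over $q$ against the $2^{q/2}$ weights then closes the estimate, and the $\Omega_1$ case differs only in that the extra zero horizontal mean-value hypothesis on $a$ and $b$ is precisely what is needed to replace \eqref{inégalité de Gagliardo-Nirenberg généralisée} by the cleaner \eqref{classique GN}, so that the same scheme goes through without residual low-frequency pollution.
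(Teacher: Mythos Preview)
Your horizontal piece is essentially the paper's argument, and the Bony vertical decomposition with the anisotropic H\"older/Gagliardo--Nirenberg toolkit is the right mechanism. However, two points deserve correction.

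First, your explanation of the two terms in the final bound is wrong. They do \emph{not} both come from the horizontal piece via the two parts of the generalized inequality \eqref{inégalité de Gagliardo-Nirenberg généralisée}. In the paper the term with exponents $1/2,1/2,1/2,3/2$ arises entirely from $I_h=\int|(\Delta_q^v(a^h\cdot\nabla_h b)\mid\Delta_q^v b)|$, while the term with integer exponents $\|\nabla_h a^h\|\,\|b\|\,\|\nabla_h b\|$ arises entirely from the vertical piece $I_v$. On $\Omega_2$ the paper uses only the homogeneous inequality \eqref{classique GN}; the generalized version is irrelevant here.

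Second, and more seriously, your treatment of $I_v$ has a gap. You propose to write $\Delta_q^v(a^v\partial_z b)=a^v\,\partial_z\Delta_q^v b+[\Delta_q^v,a^v]\partial_z b$, integrate by parts in the first piece, and bound the commutator by Lemma~\ref{estimée de commutateur}. But that lemma gives
\[
\big\|[\Delta_q^v,a^v]\partial_z b\big\|_{L_v^2 L_h^p}\le C\,2^{-q}\,\|\partial_z a^v\|_{L_v^\infty L_h^r}\,\|\partial_z b\|_{L_v^2 L_h^s},
\]
and the factor $\|\partial_z b\|_{L_v^2 L_h^s}$ is \emph{not} controlled by the hypothesis $b\in E$: the space $E$ gives only $b$ and $\nabla_h b$ in anisotropic Besov, never a vertical derivative of $b$. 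The paper closes this by using Chemin's decomposition \eqref{décomposition Chemin}, which first Bony-decomposes $a^v\partial_z b$ so that the commutator becomes $[\Delta_q^v,S_{q'-1}^v a^v]\,\partial_z\Delta_{q'}^v b$ with $|q-q'|\le4$; now $\partial_z$ acts on a vertically localized block and Bernstein gives $\|\partial_z\Delta_{q'}^v b\|\le C2^{q'}\|\Delta_{q'}^v b\|$, so the $2^{q'}\sim2^q$ is exactly compensated by the $2^{-q}$ from the commutator. Two further pieces ($A_3$ and $A_4$ in \eqref{décomposition Chemin}) handle the low/high frequency interactions that your naive splitting misses. Finally, the Poincar\'e--Wirtinger argument for $S_0^v a^v$ that you invoke is not used here at all; it belongs to the uniqueness Lemma~\ref{Quatrième sous-lemme d'unicité}, not to this convection lemma.
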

\begin{remark}~~
In the case of the three-dimensional torus, the assumption $a$ and $b$ of zero horizontal mean values allows us to use the Gagliardo-Nirenberg inequality \eqref{classique GN} of Lemma \ref{Bernstein et Gagliardo-Nirenberg} which is the same as that for the case of whole space.

In the following lemmas, we will consider the horizontal mean values of $a$ and $b$ for the three-dimensional torus.
\end{remark}

\begin{proof}
First of all, let us split the left-hand side of this inequality into two parts: a convection term containing only horizontal derivatives and another containing the vertical derivative

    $$I\mathrel{\mathop:}=\int_0^T|(\Delta_q^v(a\cdot\nabla b)|\Delta_q^v b)_{L^2(\Omega)}|dt=I_h+I_v,$$
    where
    \begin{align*}
        I_h & \mathrel{\mathop:}= \int_0^T|(\Delta_q^v(a^h\cdot\nabla_h b)|\Delta_q^v b)_{L^2(\Omega)}|dt
        \\
        \text{and} \quad I_v & \mathrel{\mathop:}= \int_0^T|(\Delta_q^v(a^v\partial_z b)|\Delta_q^v b)_{L^2(\Omega)}|dt.
    \end{align*}

\subsection*{Horizontal term} Let us first estimate the term $I_h$ and let us denote $F_q^h=\Delta_q^v(a^h\cdot \nabla_h b)$. 
    The idea is to look at $\Delta_q^v b$ in $L_h^4(L_v^2)$ in order to use the Gagliardo-Nirenberg inequality \eqref{classique GN} and the fact that $b\in E$ (defined in \eqref{Espace fonctionnel E}). We will decompose using the paradifferential calculus $F_q^h$ and estimate this term in the "classical" way.
    
    By the Cauchy-Schwarz inequality in the vertical variable, we have:
    $$|(F_q^h|\Delta_q^v b)_{L^2(\Omega)}|\leq \int \|F_q^h\|_{L_v^2}\|\Delta_q^v b\|_{L_v^2}dx_h.$$

    Using Hölder's inequality in the horizontal variable, we obtain :  $$|(F_q^h|\Delta_q^v b)_{L^2(\Omega)}|\leq \|F_q^h\|_{L_h^{4/3}(L_v^2)}\|\Delta_q^v b\|_{L_h^4(L_v^2)}.$$

    By Lemma \ref{ordre intégration} and the Gagliardo-Nirenberg inequality \eqref{classique GN}, we get : 
    $$|(F_q^h|\Delta_q^v b)_{L^2(\Omega)}|\leq \|F_q^h\|_{L_h^{4/3}(L_v^2)}\|\Delta_q^v b\|_{L^2(\Omega)}^{1/2}\|\Delta_q^v \nabla_h b\|_{L^2(\Omega)}^{1/2}.$$

    We then have by Hölder's inequality in time :
    $$\displaylines{\int_0^T |(F_q^h|\Delta_q^v b)_{L^2(\Omega)}|d\tau\leq \|F_q^h\|_{L_T^{4/3}(L_h^{4/3}L_v^2)}\|\Delta_q^v b\|_{L_T^\infty(L^2(\Omega))}^{1/2}  \hfill\cr\hfill\times\left\| \|\Delta_q^v \nabla_h b\|_{L^2(\Omega)}^{1/2} \right\|_{L^4([0,T])}.}$$

    So we have : $$\displaylines{\int_0^T |(F_q^h|\Delta_q^v b)_{L^2(\Omega)}|d\tau\leq \|F_q^h\|_{L_T^{4/3}(L_h^{4/3}L_v^2)}\|\Delta_q^v b\|_{L_T^\infty(L^2(\Omega))}^{1/2}  \hfill\cr\hfill\times \|\Delta_q^v \nabla_h b\|_{L_T^2(L^2(\Omega))}^{1/2}.}$$

     Since $b$ belongs to $E$, if we note $c_q^{(1)}=\frac{2^{q/2}\|\Delta_q^v b\|_{L_T^\infty(L^2(\Omega))}}{\|b\|_{\tilde{L}_T^\infty(\mathcal{B}^{0,1/2})}}\in l_q^1$, $c_q^{(2)}=\frac{2^{q/2}\|\Delta_q^v \nabla_h b\|_{L_T^2(L^2(\Omega))}}{\|\nabla_h b\|_{\tilde{L}_T^2(\mathcal{B}^{0,1/2})}}\in l_q^1$ and $d_q=\sqrt{c_q^{(1)}c_q^{(2)}}$, then $$\displaylines{\int_0^T |(F_q^h(\tau)|\Delta_q^v b(\tau))_{L^2}|d\tau\leq 2^{-q/2}C d_q \|F_q^h\|_{L_T^{4/3}(L_h^{4/3}L_v^2)}\|b\|_{\tilde{L}_T^\infty(\mathcal{B}^{0,1/2})}^{1/2} \hfill\cr\hfill \|\nabla_h b\|_{\tilde{L}_T^2(\mathcal{B}^{0,1/2})}^{1/2}.}$$ 

Now let us estimate $\|\Delta_q^v(a^h\cdot\nabla_h b)\|_{L_h^{4/3}(L_v^2)}$. Decomposing in terms of paraproducts and remainder, we have $$\displaylines{\Delta_q^v(a^h\cdot\nabla_h b)=\Delta_q^v T_{a^h}\nabla_h b+\Delta_q^v T_{\nabla_h b}a^h+\Delta_q^v R(a^h,\nabla_h b). }$$

Let us estimate these terms one by one.
\\
$\bullet$ For the first term $\Delta_q^v T_{a^h}\nabla_h b$, we have : $$\|\Delta_q^v T_{a^h} \nabla_h b\|_{L_v^2}\leq \sum_{|q'-q|\leq 4}\|S_{q'-1}^v a^h\|_{L_v^\infty}\|\Delta_{q'}^v\nabla_h b\|_{L_v^2}.$$

    Hölder's inequality gives :
    $$\displaylines{\|\Delta_q^v T_{a^h} \nabla_h b\|_{L_h^{4/3}(L_v^2)}\leq \sum_{|q'-q|\leq 4}\|S_{q'-1}^v a^h\|_{L_h^4(L_v^\infty)} \|\Delta_{q'}^v\nabla_h b\|_{L^2(\Omega)}.}$$

   According to the corollary \ref{coro Bernstein-Gagliardo-Nirenberg}, we have : $$\displaylines{\|\Delta_q^v T_{a^h} \nabla_h b\|_{L_h^{4/3}(L_v^2)}\leq \|a^h\|_{\mathcal{B}^{0,1/2}}^{1/2}\|\nabla_h a^h\|_{\mathcal{B}^{0,1/2}}^{1/2} \sum_{|q-q'|\leq 4}\|\Delta_{q'}^v \nabla_h b\|_{L^2(\Omega)}.}$$ 

Using Hölder's inequality in time, we obtain :
$$\displaylines{\|\Delta_q^v T_{a^h}\nabla_h b\|_{L_T^{4/3}(L_h^{4/3}L_v^2)}\leq \|a^h\|_{L_T^\infty(\mathcal{B}^{0,1/2})}^{1/2}\|\nabla_h a^h\|_{L_T^2(\mathcal{B}^{0,1/2})}^{1/2} \hfill\cr\hfill \times \sum_{|q'-q|\leq 4}\|\Delta_{q'}^v\nabla_h b\|_{L_T^2(L^2)}.}$$
So we have: \begin{multline}\label{estimée paraproduit 1.1}\|\Delta_q^v T_{a^h}\nabla_h b\|_{L_T^{4/3}(L_h^{4/3}L_v^2)} \\ \leq 2^{-q/2}C c_{q}^{(2)}\|a^h\|_{L_T^\infty(\mathcal{B}^{0,1/2})}^{1/2}\|\nabla_h a^h\|_{L_T^2(\mathcal{B}^{0,1/2})}^{1/2}  \|\nabla_h b\|_{\tilde{L}_T^2(\mathcal{B}^{0,1/2})}.\end{multline}
\\
$\bullet$ For the second term $\Delta_q^v T_{\nabla_h b}a^h$, we have: $$\|\Delta_q^v T_{\nabla_h b}a^h\|_{L_h^{4/3}L_v^2}\leq \sum_{|q'-q|\leq 4}\|S_{q'-1}^v\nabla_h b\|_{L_h^2(L_v^\infty)}\|\Delta_{q'}^v a^h\|_{L_h^4(L_v^2)}.$$

Using Lemma \ref{ordre intégration}, \ref{Bernstein et Gagliardo-Nirenberg} and \ref{coro Bernstein-Gagliardo-Nirenberg} lead to : 
$$\|\Delta_q^v T_{\nabla_h b}a^h\|_{L_h^{4/3}L_v^2}\leq \|\nabla_h b\|_{\mathcal{B}^{0,1/2}}\sum_{|q'-q|\leq 4}\|\Delta_{q'}^v a^h\|_{L^2(\Omega)}^{1/2}\|\Delta_{q'}^v \nabla_h a^h\|_{L^2(\Omega)}^{1/2}.$$

By Hölder's inequality, we have :
$$\displaylines{\|\Delta_q^v T_{\nabla_h b}a^h\|_{L_T^{4/3}(L_h^{4/3}L_v^2)}\leq \|\nabla_h b\|_{L_T^2(\mathcal{B}^{0,1/2})}\sum_{|q'-q|\leq 4}\|\Delta_{q'}^v a^h\|_{L_T^\infty(L^2)}^{1/2}\hfill\cr\hfill\times\|\Delta_{q'}^v \nabla_h a^h\|_{L_T^2(L^2)}^{1/2}.}$$

By noting $\tilde{c}_q=\sqrt{\tilde{c}_q^{(1)}\tilde{c}_q^{(2)}}\in l_q^1$ where $\tilde{c}_q^{(1)}=\frac{2^{q/2}\|\Delta_q^v a^h\|_{L_T^\infty(L^2)}}{\|a^h\|_{\tilde{L}_T^\infty(\mathcal{B}^{0,1/2})}}$ and $\tilde{c}_q^{(2)}=\frac{2^{q/2}\|\Delta_q^v \nabla_h a^h\|_{L_T^2(L^2)}}{\|\nabla_h a^h\|_{\tilde{L}_T^2(\mathcal{B}^{0,1/2})}}$, we have : 

\begin{equation}\label{estimée paraproduit 1.2}\begin{aligned}\|\Delta_q^v T_{\nabla_h b}a^h\|_{L_T^{4/3}(L_h^{4/3}L_v^2)}\leq 2^{-q/2}C \tilde{c}_q \|\nabla_h b\|_{L_T^2(\mathcal{B}^{0,1/2})}\|a^h\|_{\tilde{L}_T^\infty(\mathcal{B}^{0,1/2})}^{1/2} \\ \times\|\nabla_h a^h\|_{\tilde{L}_T^2(\mathcal{B}^{0,1/2})}^{1/2}.\end{aligned}\end{equation}
\\
$\bullet$ For the remainder term $R(a^h,\nabla_h b)$, by Bernstein's inequality \eqref{classique GN}, we have :
$$\displaylines{\|\Delta_q^v R(a^h,\nabla_h b)\|_{L_v^2}\leq 2^{q/2}\|\Delta_q^v R(a^h,\nabla_h b)\|_{L_v^1}\leq 2^{q/2}\sum_{\underset{i\in\{0,\pm 1\}}{q'>q-4}}\|\Delta_{q'}^v a^h\|_{L_v^2}\hfill\cr\hfill \times\|\Delta_{q'-i}^v \nabla_h b\|_{L_v^2}.}$$

By the Hölder inequality and the Gagliardo-Nirenberg inequality \eqref{classique GN}, we obtain : 
$$\displaylines{\|\Delta_q^v R(a^h,\nabla_h b)\|_{L_h^{4/3}(L_v^2)}\leq 2^{q/2}\sum_{\underset{i\in\{0,\pm 1\}}{q'>q-4}}\|\Delta_{q'}^v a^h\|_{L_h^4(L_v^2)}\|\Delta_{q'-i}^v \nabla_h b\|_{L^2(\Omega)} \hfill\cr\hfill \leq 2^{q/2} \sum_{\underset{i\in\{0,\pm 1\}}{q'>q-4}}\|\Delta_{q'}^v a^h\|_{L^2(\Omega)}^{1/2} \|\Delta_{q'}^v \nabla_h a^h\|_{L^2(\Omega)}^{1/2}  \|\Delta_{q'-i}^v \nabla_h b\|_{L^2(\Omega)} .}$$

Taking the norm $L_T^{4/3}$, we have: 
$$\displaylines{\|\Delta_q^v R(a^h,\nabla_h b)\|_{L_T^{4/3}(L_h^{4/3}L_v^2)}\leq 2^{q/2}\sum_{\underset{i\in\{0,\pm 1\}}{q'>q-4}}\|\Delta_{q'}^v a^h\|_{L_T^\infty(L^2)}^{1/2}\|\Delta_{q'}^v \nabla_h a^h\|_{L_T^2(L^2)}^{1/2}\hfill\cr\hfill\times \|\Delta_{q'-i}^v \nabla_h b\|_{L_T^2(L^2)}.}$$

By noting $e_q=c_q^{(2)}\sqrt{\tilde{c}_q^{(1)}\tilde{c}_q^{(2)}}$, we have: $$\displaylines{\|\Delta_q^v R(a^h,\nabla_h b)\|_{L_T^{4/3}(L_h^{4/3}L_v^2)}\leq 2^{q/2}\sum_{\underset{i\in\{0,\pm 1\}}{q'>q-4}}2^{-q'}e_{q'}\|a^h\|_{\tilde{L}_T^\infty(\mathcal{B}^{0,1/2})}^{1/2} \hfill\cr\hfill\times\|\nabla_h a^h\|_{\tilde{L}_T^2(\mathcal{B}^{0,1/2})}^{1/2} \|\nabla_h b\|_{\tilde{L}_T^2(\mathcal{B}^{0,1/2})}.}$$

If we note $\tilde{e}_q=2^q \sum_{q'\geq q-N_0}2^{-q'}e_q$, then $\tilde{e}_q\in l_q^1$ and we have: \begin{multline}\label{terme de reste1}
   \|\Delta_q^v R(a^h, \nabla_h b)\|_{L_T^{4/3}(L_h^{4/3}L_v^2)} \\ \leq C2^{-q/2} \tilde{e}_q \|a^h\|_{\tilde{L}_T^\infty(\mathcal{B}^{0,1/2})}^{1/2}\|\nabla_h a^h\|_{\tilde{L}_T^2(\mathcal{B}^{0,1/2})}^{1/2} \|\nabla_h b\|_{\tilde{L}_T^2(\mathcal{B}^{0,1/2})}.
\end{multline}
Summing up the estimates \eqref{estimée paraproduit 1.1}, \eqref{estimée paraproduit 1.2} and \eqref{terme de reste1} and using Bony's decomposition, we obtain: 
$$\|F_q^h\|_{L_T^{4/3}(L_h^{4/3}L_v^2)}\leq C 2^{-q/2}d_q \|a^h\|_{\tilde{L}_T^\infty(\mathcal{B}^{0,1/2})}^{1/2}\|\nabla_h a^h\|_{\tilde{L}_T^2(\mathcal{B}^{0,1/2})}^{1/2} \|\nabla_h b\|_{\tilde{L}_T^2(\mathcal{B}^{0,1/2})}$$

We therefore have \begin{equation}\label{terme horizontal 1}\begin{aligned}
    \int_0^T |(F_q^h(\tau)|\Delta_q^v b(\tau))_{L^2}|d\tau \leq 2^{-q}C c_q   \|a^h\|_{\tilde{L}_T^\infty(\mathcal{B}^{0,1/2})}^{1/2}\|\nabla_h a^h\|_{\tilde{L}_T^2(\mathcal{B}^{0,1/2})}^{1/2} \\ \times \|b\|_{\tilde{L}_T^\infty(\mathcal{B}^{0,1/2})}^{1/2} \|\nabla_h b\|_{\tilde{L}_T^2(\mathcal{B}^{0,1/2})}^{3/2}.\end{aligned}
\end{equation}

\subsection*{Vertical term} It remains to estimate the term $\int_0^T|(\Delta_q^v(a^v\partial_z b)|\Delta_q^v b)|d\tau$.  

To do this, we will use the following decomposition given in \cite{Chemin}: 

\begin{equation}\label{décomposition Chemin}\Delta_q^v(a^v\partial_z b)\mathrel{\mathop:}=\sum_{i=1}^4 A_i,\end{equation}
where $$A_1\mathrel{\mathop:}= (S_{q-1}^v a^v)\partial_z \Delta_q^v b, \quad A_2\mathrel{\mathop:}= \sum_{|q'-q|\leq 4}[\Delta_q^v;S_{q'-1}^v a^v]\partial_z \Delta_{q'}^v b,$$
$$A_3\mathrel{\mathop:}= \sum_{|q'-q|\leq 4}((S_{q-1}^v a^v-S_{q'-1}^v a^v)\partial_z \Delta_q^v \Delta_{q'}^v b)$$ and $$A_4\mathrel{\mathop:}= \sum_{q'> q+4}\Delta_q^v(S_{q'+1}^v(\partial_z b)\Delta_{q'}^v a^v).$$
\\
The general idea is to transform by integration by parts $a^v \partial_z b$ into $(\partial_z a^v) \times b $ and to use the fact that $\dive a=0$ to get $\dive_h a^h$ and end up with estimates close to those studied previously. To be able to do this with the localisation operators, we use the previous decomposition. We will use integration by parts for the first term. For the second term, we will use commutator estimates to obtain $\partial_z a^v$. By frequency localisation, the third term will be treated in a similar way to the second. For the last term, the aim is to be able to "move" the $\partial_z$ derivative on $a^v$ using Bernstein's lemma and frequency localisations.
\\
$\bullet$ Let us start by estimating the first term $\int_0^T|(A_1|\Delta_q^v b)_{L^2}|d\tau.$

First, we have integration by parts and the divergence-free condition: $$((S_{q-1}^v a^v) \partial_z \Delta_q^v b| \Delta_q^v b)_{L^2}=\frac{1}{2}(S_{q-1}^v(\dive_h a^h)\Delta_q^v b|\Delta_q^v b)_{L^2}.$$

By Hölder's inequality and the corollary \ref{coro Bernstein-Gagliardo-Nirenberg}, we then obtain:
\begin{align*}
    \int_0^T|(A_1|\Delta_q^v b)_{L^2}|d\tau & \leq C \int_0^T \|S_{q-1}^v (\nabla_h a^h)\|_{L_v^\infty L_h^2}\|\Delta_q^v b\|_{L_v^2 L_h^4}^2 d\tau 
    \\ & \leq C \int_0^T \|\nabla_h a^h(\tau)\|_{\mathcal{B}^{0,1/2}}\|\Delta_q^v b(\tau)\|_{L_v^2 L_h^4}^2 d\tau.
\end{align*}

By the Gagliardo-Nirenberg inequality \eqref{classique GN} and the Hölder inequality in time, we obtain
\begin{multline}\label{premier lemme de convection premier terme}\int_0^T|(A_1|\Delta_q^v b)_{L^2}|d\tau \\ \leq C 2^{-q} c_q \|\nabla_h a^h\|_{\tilde{L}_T^2(\mathcal{B}^{0,1/2})}\|b\|_{\tilde{L}_T^\infty(\mathcal{B}^{0,1/2})} \|\nabla_h b\|_{\tilde{L}_T^2(\mathcal{B}^{0,1/2})}.\end{multline}
\\
$\bullet$ Let us estimate the second term
$\int_0^T|(A_2|\Delta_q^v b)_{L^2}|d\tau$.

By Hölder's inequality and the commutator estimates of Lemma \ref{estimée de commutateur}, we have: 
\begin{align*}
     & \int_0^T|(A_2|\Delta_q^v b)_{L^2}|d\tau
    \\ \leq & \sum_{|q'-q|\leq 4}\int_0^T \|[\Delta_q^v;S_{q'-1}^v a^v]\partial_z \Delta_{q'}^v b\|_{L_v^2(L_h^{4/3})}\|\Delta_q^v b\|_{L_v^2 L_h^4}d\tau
    \\ \leq & C \sum_{|q'-q|\leq 4} 2^{-q} \int_0^T \|S_{q'-1}\partial_z a^v\|_{L_v^\infty L_h^2} \|\partial_z \Delta_{q'}^v b\|_{L_v^2 L_h^4}\|\Delta_q^v b\|_{L_v^2 L_h^4}d\tau
    \\ \leq & C \sum_{|q'-q|\leq 4}2^{-q}\|S_{q'-1}^v \partial_z a^v\|_{L_T^2(L_v^\infty L_h^2)}\|\partial_z \Delta_{q'}^v b\|_{L_T^4(L_v^2 L_h^4)}\|\Delta_{q'}^v b\|_{L_T^4(L_v^2 L_h^4)}.
\end{align*}

By Bernstein and Gagliardo-Nirenberg inequalities \eqref{classique GN}, we have: \begin{align*}\|\partial_z \Delta_{q'}^v b\|_{L_T^4(L_v^2 L_h^4)} & \leq C 2^{q'}\|\Delta_{q'}^v b\|_{L_T^4(L_v^2 L_h^4)} \\ & \leq C2^{q'}\|\Delta_{q'}^v b\|_{L_T^\infty(L^2)}^{1/2} \|\Delta_{q'}^v \nabla_h b\|_{L_T^2(L^2)}^{1/2}.\end{align*}

As $\partial_z a^v=-\dive_h a^h$, we obtain that: \begin{multline}\label{premier lemme de convection deuxième terme}\int_0^T|(A_2|\Delta_q^v b)_{L^2}|d\tau \\ \leq C 2^{-q} c_q \|\nabla_h a^h\|_{\tilde{L}_T^2(\mathcal{B}^{0,1/2})} \|b\|_{\tilde{L}_T^\infty(\mathcal{B}^{0,1/2})}\|\nabla_h b\|_{\tilde{L}_T^2(\mathcal{B}^{0,1/2})}.\end{multline}
\\
$\bullet$ For the third term $\int_0^T|(A_3|\Delta_q^v b)_{L^2}|d\tau,$ 
we note that $$\Delta_q^v \Delta_{q'}^v=0 \quad \text{for} \ |q'-q|\geq 2$$ and $S_{q-1}^v a^v-S_{q'-1}^v a^v$ is either zero or $a^v$ is located on an annulus of size $2^q$ for $|q'-q|\leq 1$.
On the other hand, using $\dive a=0$ and the same calculations as for the second term, we can easily obtain the estimate :
\begin{multline}\label{premier lemme de convection troisième terme}\int_0^T|(A_3|\Delta_q^v b)_{L^2}|d\tau \\ \leq C 2^{-q} c_q \|\nabla_h a^h\|_{\tilde{L}_T^2(\mathcal{B}^{0,1/2})} \|b\|_{\tilde{L}_T^\infty(\mathcal{B}^{0,1/2})}\|\nabla_h b\|_{\tilde{L}_T^2(\mathcal{B}^{0,1/2})}.\end{multline}
\\
$\bullet$ The last term to be estimated is $\int_0^T|(A_4|\Delta_q^v b)_{L^2}|d\tau$. By Hölder inequality, we have:
$$\displaylines{\sum_{q'>q-4}|(\Delta_q^v(S_{q'+1}^v(\partial_z b)\Delta_{q'}^v a^v)|\Delta_{q}^v b)_{L^2}|\leq \sum_{q'>q-4}\|S_{q'+1}^v(\partial_z b)\|_{L_T^4(L_v^\infty L_h^4)} \hfill\cr\hfill\times\|\Delta_{q'}^v a^v\|_{L_T^2(L_v^2 L_h^2)}\|\Delta_q^v b\|_{L_T^4(L_v^2 L_h^4)}.}$$

By Bernstein's Lemma \ref{Bernstein et Gagliardo-Nirenberg} and Corollary \ref{coro Bernstein-Gagliardo-Nirenberg}, we have: 
$$\displaylines{\|S_{q'+1}^v(\partial_z b)\|_{L_T^4(L_v^\infty L_h^4)}\leq C 2^{q'}\|b\|_{L_T^4(L_v^\infty L_h^4)}\leq  C 2^{q'} \|b\|_{L_T^\infty(\mathcal{B}^{0,1/2})}^{1/2} \hfill\cr\hfill\times \|\nabla_h b\|_{L_T^2(\mathcal{B}^{0,1/2})}^{1/2}.}$$

By Bernstein's lemma \ref{Bernstein et Gagliardo-Nirenberg} and the divergence-free condition, we have : 
\begin{align*}\|\Delta_{q'}^v a^v\|_{L^2([0,T]\times \Omega)} & \leq C 2^{-q'}\|\Delta_{q'}^v \partial_z a^v\|_{L^2([0,T]\times \Omega)} \\ & \leq C 2^{-q'}\|\Delta_{q'}^v \nabla_h a^h\|_{L^2([0,T]\times \Omega)}.\end{align*}

This means that $$\displaylines{\int_0^T|(A_1|\Delta_q^v b)_{L^2}|d\tau \hfill\cr \leq C\|b\|_{L_T^\infty(\mathcal{B}^{0,1/2})}^{1/2}\|\nabla_h b\|_{L_T^2(\mathcal{B}^{0,1/2})}^{1/2} \sum_{q'>q-4} \|\Delta_{q'}^v \nabla_h a^h\|_{L_T^2(L^2(\Omega))}\|\Delta_q^v b\|_{L_T^\infty(L^2)}^{1/2} \hfill\cr\hfill \times\|\Delta_q^v \nabla_h b\|_{L_T^2(L^2)}^{1/2} \cr \leq C 2^{-q/2}c_q \|b\|_{L_T^\infty(\mathcal{B}^{0,1/2})}\|\nabla_h b\|_{L_T^2(\mathcal{B}^{0,1/2})}\sum_{q'\geq q-4}2^{-q'/2}\tilde{c}_{q'}^{(2)}\|\nabla_h a^h\|_{\tilde{L}_T^2(\mathcal{B}^{0,1/2})}.}$$ 
We then have:
\begin{multline}\label{premier lemme de convection quatrième terme}
\int_0^T|(A_4|\Delta_q^v b)_{L^2}|d\tau \\ \leq C2^{-q}d_q \overline{b}_q \|b\|_{L_T^\infty(\mathcal{B}^{0,1/2})}\|\nabla_h b\|_{L_T^2(\mathcal{B}^{0,1/2})}\|\nabla_h a^h\|_{\tilde{L}_T^2(\mathcal{B}^{0,1/2})},    
\end{multline}
where $\overline{b}_q=2^{q/2}\sum_{q'>q-4}2^{-q'/2}\tilde{c}_{q'}^{(2)}$.

By estimates \eqref{premier lemme de convection premier terme}, \eqref{premier lemme de convection deuxième terme}, \eqref{premier lemme de convection troisième terme} and \eqref{premier lemme de convection quatrième terme}, we deduce: 
\begin{equation}\label{terme vertical}\begin{aligned}\int_0^T|(\Delta_q^v(a^v\partial_z b)|\Delta_q^v b)|d\tau\leq C 2^{-q} c_q  C \|\nabla_h a^h\|_{\tilde{L}_T^2(\mathcal{B}^{0,1/2})} \\ \times\|\nabla_h b\|_{\tilde{L}_T^2(\mathcal{B}^{0,1/2})}\|v\|_{L_T^\infty(\mathcal{B}^{0,1/2})}.\end{aligned}\end{equation}

    So, by \eqref{terme horizontal 1} and \eqref{terme vertical}, we deduce the lemma.
\end{proof}

We therefore reprove the corresponding lemma for the domain \( \Omega_2 \), which highlights the main difficulty: estimating the convection term under the assumption that horizontal mean values vanish. These mean values can be managed manually as in \cite{Paicu2}.

\begin{lemma}\label{lemme convection final}
Let $\Omega=\Omega_1$ be the three-dimensional torus.
    There exists a constant $C$ such that for any time-dependent vector fields, $a$ and $b$ with $a^h$ and $b$ in $E$ (defined in \eqref{Espace fonctionnel E}) with $a$ of zero divergence, there exists a sequence $c_q$ verifying $\sum_q \sqrt{c_q}\leq 1$ and $$\displaylines{\int_0^T \left|\Delta_q^v(a\cdot\nabla b)|\Delta_q^v b)_{L^2(\Omega)}\right|d\tau \hfill\cr\hfill \leq C c_q 2^{-q}\bigg(\|\nabla_h a^h\|_{\tilde{L}_T^2(\mathcal{B}^{0,1/2})}\|b\|_{\tilde{L}_T^\infty(\mathcal{B}^{0,1/2})}\|\nabla_h b\|_{\tilde{L}_T^2(\mathcal{B}^{0,1/2})} \hfill\cr\hfill+\|a^h\|_{\tilde{L}_T^\infty(\mathcal{B}^{0,1/2})}\|\nabla_h b\|_{\tilde{L}_T^2(\mathcal{B}^{0,1/2})}^2\bigg).}$$ 
\end{lemma}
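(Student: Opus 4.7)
The plan is to reuse the proof of Lemma \ref{Premier lemme de convection} as a template and carefully track every point at which the zero-horizontal-mean hypothesis was used. The only substantive role played by that hypothesis was to unlock the clean Gagliardo-Nirenberg inequality \eqref{classique GN}; on the torus without mean-value assumption this must be systematically replaced by the torus version \eqref{inégalité de Gagliardo-Nirenberg généralisée}, which contributes an additional $\|u\|_{L^2(\Omega)}$ term. The strategy is therefore: split $I = I_h + I_v$ as before, apply Bony's vertical paraproduct, and, at each appeal to Gagliardo-Nirenberg, isolate the main piece (which reproduces the bound of Lemma \ref{Premier lemme de convection}) from the extra $L^2$ piece, which will produce a correction of the new form $\|a^h\|_{\tilde{L}_T^\infty(\mathcal{B}^{0,1/2})}\|\nabla_h b\|_{\tilde{L}_T^2(\mathcal{B}^{0,1/2})}^2$.

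For the horizontal term $I_h$, I decompose $\Delta_q^v(a^h\cdot\nabla_h b)$ via the paraproducts $T_{a^h}\nabla_h b$, $T_{\nabla_h b}a^h$ and the remainder $R(a^h,\nabla_h b)$. Each paraproduct estimate involves one or two $L_v^2 L_h^4$ norms on $a^h$ or $b$; applying \eqref{inégalité de Gagliardo-Nirenberg généralisée} produces, besides the original factor $\|\cdot\|_{L^2}^{1/2}\|\nabla_h\cdot\|_{L^2}^{1/2}$, a pure $L^2$ contribution. Summing these over $q'$ with Corollary \ref{coro Bernstein-Gagliardo-Nirenberg} and Hölder in time gives a bound of the form $\|a^h\|_{\tilde{L}_T^\infty(\mathcal{B}^{0,1/2})}^{1/2}\|\nabla_h a^h\|_{\tilde{L}_T^2(\mathcal{B}^{0,1/2})}^{1/2}\|b\|_{\tilde{L}_T^\infty(\mathcal{B}^{0,1/2})}^{1/2}\|\nabla_h b\|_{\tilde{L}_T^2(\mathcal{B}^{0,1/2})}^{3/2}$ plus additive extras. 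The interpolated piece is then rewritten by Young's inequality $xyzw\leq x^2 w^2 + y^2 z^2$ to redistribute it into $\|\nabla_h a^h\|\|b\|\|\nabla_h b\|+\|a^h\|\|\nabla_h b\|^2$, which is precisely the shape required.

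For the vertical term $I_v$, I rely on Chemin's decomposition \eqref{décomposition Chemin} and its four pieces $A_1,\ldots,A_4$. The decisive point is that the incompressibility identity $\partial_z a^v=-\dive_h a^h$ converts every vertical derivative of $a^v$ into a horizontal derivative of $a^h$, so the horizontal mean of $a^v$ never enters the estimates: after integrating by parts in $A_1$, taking commutator bounds in $A_2$, exploiting spectral localisation in $A_3$, and transferring the $\partial_z$ onto the high-frequency block in $A_4$, each resulting factor is either $\|\nabla_h a^h\|$ or, via the generalized Gagliardo-Nirenberg on the $b$-factor, a sum of $\|b\|^{1/2}\|\nabla_h b\|^{1/2}$ and $\|b\|$ terms. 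These once again combine into bounds of the two admissible shapes.

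The main obstacle, and the sole reason for stating this lemma separately from Lemma \ref{Premier lemme de convection}, is the bookkeeping of the $L^2$ corrections produced by \eqref{inégalité de Gagliardo-Nirenberg généralisée}: each one must be resummed against a square-root-summable sequence $c_q$ without access to the horizontal Poincaré inequality. This forces careful use of Cauchy-Schwarz on the near-diagonal sums $|q'-q|\leq 4$ and on the remainder tail $q'\geq q-4$, together with Young's inequality to redistribute products of mixed norms. Once these summations are organized, adding the horizontal and vertical contributions yields the announced estimate.
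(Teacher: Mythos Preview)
Your plan of systematically replacing \eqref{classique GN} by the torus Gagliardo--Nirenberg inequality \eqref{inégalité de Gagliardo-Nirenberg généralisée} and then ``bookkeeping'' the resulting $L^2$ corrections runs into a genuine obstruction that Young's inequality and Cauchy--Schwarz cannot repair. Consider the vertical piece $A_1$: after the usual integration by parts one is left with
\[
\int_0^T \|S_{q-1}^v(\dive_h a^h)\|_{L_v^\infty L_h^2}\,\|\Delta_q^v b\|_{L_v^2 L_h^4}^2\,d\tau.
\]
Expanding $\|\Delta_q^v b\|_{L_v^2 L_h^4}^2$ via \eqref{inégalité de Gagliardo-Nirenberg généralisée} produces, beyond the main term, a contribution of the type $\|\nabla_h a^h(\tau)\|_{\mathcal{B}^{0,1/2}}\,\|\Delta_q^v b(\tau)\|_{L^2}^2$. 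Here $\|\nabla_h a^h\|_{\mathcal{B}^{0,1/2}}$ is only in $L^2_T$ while $\|\Delta_q^v b\|_{L^2}^2$ is only in $L^\infty_T$, so the time integral forces a factor $T^{1/2}$. The resulting term $\|\nabla_h a^h\|_{\tilde L^2_T(\mathcal{B}^{0,1/2})}\|b\|_{\tilde L^\infty_T(\mathcal{B}^{0,1/2})}^2$ matches neither of the two admissible shapes in the conclusion, and no algebraic redistribution can absorb it without a $T$-dependent constant --- which would wreck the bootstrap in the a priori estimates. The same phenomenon recurs in the horizontal paraproduct terms whenever the extra $L^2$ piece falls on the factor that must be integrated in $L^\infty_T$.

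The approach the paper points to (Paicu \cite{Paicu2}, and illustrated in the proof of Lemma~\ref{3eme terme de convection}) is not to carry the $L^2$ correction blindly but to recognise that it is precisely the horizontal mean: one writes $a=\underline a+\tilde a$, $b=\underline b+\tilde b$ with $\underline f(x_v)=\tfrac14\int_{\mathcal T_h^2} f\,dx_h$. The zero-mean parts $\tilde a,\tilde b$ satisfy the clean inequality \eqref{classique GN}, reducing those pieces to Lemma~\ref{Premier lemme de convection}; the mean parts are constant in $x_h$, so many cross terms vanish outright (e.g.\ $\int_{\mathcal T_h^2}\dive_h a^h\,dx_h=0$ kills the dangerous $\underline b$--$\underline b$ pairing in $A_1$), and the surviving ones are controlled by the 1D structure of $\underline b$ using $\|\Delta_q^v\underline b\|_{L_v^2}\le\|\Delta_q^v b\|_{L^2}$. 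It is these cancellations, not a generic $L^2$ surplus, that produce the second allowed shape $\|a^h\|_{\tilde L^\infty_T}\|\nabla_h b\|_{\tilde L^2_T}^2$ with a $T$-independent constant. Your proposal needs this explicit mean/oscillatory decomposition to close.
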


The proof is very similar to that of Lemma 6 of \cite{Paicu2}. The details can be found in \cite{Thèse}.

In the case of the decomposition $u^h=\tilde{u}^h+\overline{u}^h$ where $\tilde{u}^h$ and $\overline{u}^h$ satisfy \eqref{Equations primitives sans moyenne verticale} and \eqref{Navier-Stokes 2D} respectively, we also needed the following convection lemma:
\begin{lemma}\label{3eme terme de convection}
For $\Omega=\Omega_1$ or $\Omega_2$, we have the following inequality for $T>0$:
    \begin{equation}\label{terme de convection2}\begin{aligned}\int_0^T |(\Delta_q^v \tilde{u}^h\cdot\nabla_h\overline{u}^h|\Delta_q^v \tilde{u}^h)_{L^2}|dt\leq 2^{-q}c_q \|\nabla_h \overline{u}^h\|_{L_T^2(L_h^2)}\|\tilde{u}^h\|_{\tilde{L}_T^\infty(\mathcal{B}^{0,1/2})} \\ \times \|\nabla_h \tilde{u}^h\|_{\tilde{L}_T^2(\mathcal{B}^{0,1/2})},\end{aligned}\end{equation}  with $\sum_q \sqrt{c_q}\leq 1$.
\end{lemma}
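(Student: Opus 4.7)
The crucial simplification, compared to the general convection term handled in Lemma \ref{Premier lemme de convection}, is that $\overline{u}^h$ is independent of the vertical variable $x_v$: its vertical Fourier spectrum is concentrated at the zero mode. Consequently, the vertical dyadic operator $\Delta_q^v$ commutes with multiplication by $\nabla_h \overline{u}^h$, so that
$$\Delta_q^v(\tilde{u}^h \cdot \nabla_h \overline{u}^h) = (\Delta_q^v \tilde{u}^h) \cdot \nabla_h \overline{u}^h,$$
and no vertical Bony decomposition is needed. The problem reduces to bounding $|(\Delta_q^v \tilde{u}^h \cdot \nabla_h \overline{u}^h \mid \Delta_q^v \tilde{u}^h)_{L^2(\Omega)}|$ directly.

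The plan is to use Cauchy–Schwarz in the vertical variable first, then Hölder in the horizontal variable with exponents $\tfrac{1}{2}+\tfrac{1}{4}+\tfrac{1}{4}=1$, to obtain
$$|(\Delta_q^v \tilde{u}^h \cdot \nabla_h \overline{u}^h \mid \Delta_q^v \tilde{u}^h)_{L^2(\Omega)}| \leq \|\nabla_h \overline{u}^h\|_{L^2(\Omega_h)} \, \|\Delta_q^v \tilde{u}^h\|_{L^4_h(L^2_v)}^2.$$
By Lemma \ref{ordre intégration} one has $\|\Delta_q^v \tilde{u}^h\|_{L^4_h(L^2_v)} \leq \|\Delta_q^v \tilde{u}^h\|_{L^2_v(L^4_h)}$, and then the Gagliardo–Nirenberg inequality \eqref{classique GN} (for $\Omega_2$) or its generalized form \eqref{inégalité de Gagliardo-Nirenberg généralisée} (for $\Omega_1$) yields
$$\|\Delta_q^v \tilde{u}^h\|_{L^2_v(L^4_h)}^2 \leq C\bigl(\|\Delta_q^v \tilde{u}^h\|_{L^2}\|\nabla_h \Delta_q^v \tilde{u}^h\|_{L^2} + \|\Delta_q^v \tilde{u}^h\|_{L^2}^2\bigr),$$
the last term being absent in the $\Omega_2$ case.

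Integrating over $[0,T]$ and applying Hölder in time with the weighting $L^2_t \cdot L^\infty_t \cdot L^2_t$ gives, for the main term,
$$C\,\|\nabla_h \overline{u}^h\|_{L^2_T(L^2_h)}\, \|\Delta_q^v \tilde{u}^h\|_{L^\infty_T(L^2)}\, \|\nabla_h \Delta_q^v \tilde{u}^h\|_{L^2_T(L^2)}.$$
Setting $d^{(1)}_q := 2^{q/2}\|\Delta_q^v \tilde{u}^h\|_{L^\infty_T(L^2)}/\|\tilde{u}^h\|_{\tilde L^\infty_T(\mathcal{B}^{0,1/2})}$ and $d^{(2)}_q := 2^{q/2}\|\nabla_h \Delta_q^v \tilde{u}^h\|_{L^2_T(L^2)}/\|\nabla_h \tilde{u}^h\|_{\tilde L^2_T(\mathcal{B}^{0,1/2})}$ (both nonnegative and $\ell^1$-summable with total mass at most $1$), we conclude with $c_q := d^{(1)}_q d^{(2)}_q$, which satisfies $\sum_q \sqrt{c_q} \leq \bigl(\sum_q d^{(1)}_q\bigr)^{1/2}\bigl(\sum_q d^{(2)}_q\bigr)^{1/2} \leq 1$ by Cauchy–Schwarz.

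The main obstacle is the extra term $\|\Delta_q^v \tilde{u}^h\|_{L^2}^2$ that appears in the $\Omega_1$ case when using the generalized Gagliardo–Nirenberg inequality. To deal with it I would split $\Delta_q^v \tilde{u}^h = (\Delta_q^v \tilde{u}^h)_m + (\Delta_q^v \tilde{u}^h)_\perp$ into its horizontal mean (a function of $(z,t)$ only) and its zero-horizontal-mean fluctuation. The diagonal pairing $((\Delta_q^v \tilde{u}^h)_m \cdot \nabla_h \overline{u}^h \mid (\Delta_q^v \tilde{u}^h)_m)_{L^2}$ vanishes identically, since $\int_{\Omega_h} \nabla_h \overline{u}^h \, dx_h = 0$ by periodicity; the remaining cross term is controlled via the horizontal Poincaré inequality of Lemma \ref{Poincaré horizontal} applied to $(\Delta_q^v \tilde{u}^h)_\perp$, gaining one horizontal derivative and recovering exactly the structure obtained for $\Omega_2$. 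This mean-value bookkeeping is the only substantive addition to the $\Omega_2$ argument.
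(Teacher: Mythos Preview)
Your proof is correct and follows essentially the same route as the paper: observe that $\overline{u}^h$ is independent of $x_v$ so $\Delta_q^v$ passes through, apply H\"older $L^2_h\times L^4_h\times L^4_h$ together with the Gagliardo--Nirenberg inequality \eqref{classique GN}, and for the torus $\Omega_1$ split $\Delta_q^v\tilde u^h$ into its horizontal mean and zero-mean fluctuation, exactly as the paper does with $w=\underline{w}+\tilde w$. One small presentational point: after the split there are \emph{three} nonvanishing pieces, the two cross terms and the fluctuation--fluctuation diagonal $((\Delta_q^v\tilde u^h)_\perp\cdot\nabla_h\overline{u}^h\mid(\Delta_q^v\tilde u^h)_\perp)$; you mention only ``the remaining cross term.'' The diagonal piece is of course harmless---since $(\Delta_q^v\tilde u^h)_\perp$ has zero horizontal mean, the classical Gagliardo--Nirenberg inequality \eqref{classique GN} applies directly and the $\Omega_2$ computation goes through verbatim---but it should be named explicitly. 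Note also that your first framing (apply the generalized inequality \eqref{inégalité de Gagliardo-Nirenberg généralisée} and then try to fix the extra $\|\Delta_q^v\tilde u^h\|_{L^2}^2$ term) cannot succeed as stated, since the horizontal-mean part of that $L^2$ norm carries no horizontal derivative; what actually works, and what you in fact describe, is to split the trilinear form \emph{before} invoking Gagliardo--Nirenberg.
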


\begin{proof}
    Since $\overline{u}^h$ does not depend on the vertical variable, we have: $$|(\Delta_q^v(\tilde{u}^h\cdot\nabla_h\overline{u}^h)|\Delta_q^v \tilde{u}^h)_{L^2}|=|(\Delta_q^v\tilde{u}^h)\cdot\nabla_h\overline{u}^h)|\Delta_q^v \tilde{u}^h)_{L^2}|.$$

    We need to distinguish between the $\Omega_1$ and $\Omega_2$ cases. Let us start with the easiest case $\Omega_2$ where the horizontal mean value does not come into play:

    \begin{enumerate}
        \item For the case $\Omega_2$, we have by Hölder's inequality: $$|(\Delta_q^v \tilde{u}^h\cdot\nabla_h \overline{u}^h)|\Delta_q^v \tilde{u}^h)_{L^2}|\leq \|\nabla_h \overline{u}^h\|_{L_h^2}\|\Delta_q^v \tilde{u}^h\|_{L_v^2 L_h^4}^2.$$
    Using the Gagliardo-Nirenberg inequality, we have by \eqref{classique GN} : 
 $$|(\Delta_q^v \tilde{u}^h\cdot \nabla_h \overline{u}^h|\Delta_q^v \tilde{u}^h)_{L^2}| \leq C \|\Delta_q^v \tilde{u}^h\|_{L^2} \|\nabla_h \Delta_q^v \tilde{u}^h\|_{L^2}\|\nabla_h \overline{u}^h\|_{L^2(\Omega_h)}.$$ 

    Using Hölder's inequality in time, we obtain: 
    $$\displaylines{\int_0^T|(\Delta_q^v \tilde{u}^h\cdot\nabla_h \overline{u}^h|\Delta_q^v \tilde{u}^h)_{L^2}|dt\leq 2^{-q}c_q \|\nabla_h \overline{u}^h\|_{L_T^2(L_h^2)}\|\tilde{u}^h\|_{\tilde{L}_T^\infty(\mathcal{B}^{0,1/2})} \hfill\cr\hfill \times\|\nabla_h \tilde{u}^h\|_{\tilde{L}_T^2(\mathcal{B}^{0,1/2})}.}$$
        \item Let us now consider the $\Omega_1$ case: let $w\mathrel{\mathop:}=\tilde{u}^h$ and decompose it with its horizontal mean value.
Let us define $$\underline{w}(x_v)\mathrel{\mathop:}=\frac{1}{4}\int w dx_h\quad \text{and} \quad \tilde{w}\mathrel{\mathop:}=w-\underline{w}.$$

The fields $\Delta_q^v \underline{w}$ and $\Delta_q^v \tilde{w}$ are orthogonal in $L^2$ and we have : $$\sum_q 2^{q/2}\|\Delta_q^v \underline{w}\|_{L_v^2}\leq C \|w\|_{\mathcal{B}^{0,1/2}} \quad  \text{respectively} \quad \|\tilde{w}\|_{\mathcal{B}^{0,1/2}}\leq C \|w\|_{\mathcal{B}^{0,1/2}}.$$

We then have the following decomposition: 
$$\displaylines{(\Delta_q^v \tilde{u}^h\cdot\nabla_h \overline{u}^h|\Delta_q^v \tilde{u}^h)_{L^2}=(\Delta_q^v \underline{w}^h\cdot\nabla_h \overline{u}^h|\Delta_q^v \underline{w})_{L^2}+(\Delta_q^v \tilde{w}\cdot \nabla_h \overline{u}^h| \Delta_q^v \tilde{w})_{L^2}\hfill\cr\hfill+2(\Delta_q^v \underline{w}\cdot\nabla_h \overline{u}^h|\Delta_q^v \tilde{w})_{L^2}.}$$
\\
$\bullet$ By integration by parts ($\underline{w}^h$ depends only on the vertical variable), we have:
$$(\Delta_q^v \underline{w}^h\cdot\nabla_h \overline{u}^h|\Delta_q^v \underline{w})_{L^2}=0.$$
\\
$\bullet$ Using the calculations for $\Omega_2$ (since $\tilde{w}$ has zero horizontal mean value) : $$\displaylines{\int_0^T|((\Delta_q^v \tilde{w}\cdot\nabla_h) \overline{u}^h|\Delta_q^v \tilde{w})_{L^2}|dt\leq 2^{-q}c_q \|\nabla_h \overline{u}^h\|_{L_T^2(L_h^2)}\|\tilde{u}^h\|_{\tilde{L}_T^\infty(\mathcal{B}^{0,1/2})} \hfill\cr\hfill\times\|\nabla_h \tilde{u}^h\|_{\tilde{L}_T^2(\mathcal{B}^{0,1/2})}.}$$
    \\
$\bullet$ For the last term, applying Hölder's inequality, we obtain: $$\int \Delta_q^v \underline{w}\cdot\nabla_h \overline{u}^h\Delta_q^v \tilde{w}dx\leq \|\Delta_q^v \underline{w}\|_{L_v^2}\|\nabla_h \overline{u}^h\|_{L_h^2}\|\Delta_q^v \tilde{w}\|_{L^2}.$$

    We then have by Hölder's inequality in time:
    $$\displaylines{\int_0^T ((\Delta_q^v \underline{w}\cdot\nabla_h) \overline{u}^h|\Delta_q^v \tilde{w})_{L^2}|dt\leq 2^{-q}c_q \|\nabla_h \overline{u}^h\|_{L_T^2(L_h^2)}\|\tilde{u}^h\|_{\tilde{L}_T^\infty(\mathcal{B}^{0,1/2})} \hfill\cr\hfill\times\|\nabla_h \tilde{u}^h\|_{\tilde{L}_T^2(\mathcal{B}^{0,1/2})}.}$$
\end{enumerate}
    Finally, we obtain \eqref{terme de convection2}.
\end{proof}


\begin{thebibliography}{99}

 \bibitem{BCD} H. Bahouri, J.-Y. Chemin and  R. Danchin: {\it Fourier Analysis and Nonlinear Partial Differential Equations,} Grundlehren der mathematischen Wissenschaften, {\bf 343}, 
Springer, 2011.

\bibitem{Bryan} K. Bryan : {\it A numerical method for the study of the circulation of the world ocean}, J. Comp. Phys.
4 (1969), 347-376.

\bibitem{Titi5} C. Cao, J. Li and E.~S. Titi, {\it Global well-posedness of the three-dimensional primitive equations with only horizontal viscosity and diffusion}, Comm. Pure Appl. Math. {\bf 69} (2016), no.~8, 1492--1531; MR3518238

\bibitem{Titi3} C. Cao, J. Li and E.~S. Titi: {\it Global well-posedness of the three-dimensional viscous primitive equations of large scale ocean and atmosphere dynamics}, Ann. of Math. (2) {\bf 166} (2007), no.~1, 245--267.

\bibitem{Titi2} C. Cao, J. Li and E.~S. Titi: {\it Strong solutions to the 3D primitive equations with only horizontal dissipation: near $H^1$ initial data}, J. Funct. Anal. {\bf 272} (2017), no.~11, 4606--4641.

\bibitem{Chemin} J.-Y. Chemin, B. Desjardins, I. Gallagher et E. Grenier: {\it Fluids with anisotropic viscosity}, M2AN. Math. Numer. Anal., 34, No. 2, p. 315-335, 2000.

\bibitem{Chemin et Xu} J.-Y. Chemin, C.-J. Xu: {\it Inclusions de Sobolev en calcul de Weyl-Hormander et champs de vecteurs sous-elliptiques}, Ann. Sci. \'Ecole Norm. Sup. (4) 30 (1997), no.6, 719-751.

\bibitem{Hieber2} K. Furukawa, Y. Giga, M. Hieber, A. Hussein, T. Kashiwabara, and M. Wrona : {\it Rigorous justification of the hydrostatic approximation for the primitive equations by scaled Navier-
Stokes equations}, Nonlinearity 33, no. 12, 2020, 6502—6516.

\bibitem{Hieber1} K. Furukawa, Y. Giga, M. Hieber, A. Hussein, T. Kashiwabara and M. Wrona: {\it Rigorous justification of the hydrostatic approximation for the primitive equations by scaled Navier-Stokes equations} IOP Publishing Ltd \& London Mathematical Society, 2020

\bibitem{Gallagher} I. Gallagher: {\it The tridimensional Navier-Stokes equations with almost bidimensional data: stability, uniqueness, and life span}, Internat. Math. Res. Notices {\bf 1997}, no.~18, 919--935.

\bibitem{Giga2} Y. Giga, M. Gries, M. Hieber, A. Hussein and T. Kashiwabara : {\it The hydrostatic Stokes semi-group and well-posedness of the primitive equations on spaces of bounded functions}, J. Funct.Anal. 279 (2020). 

\bibitem{Giga1} Y. Giga, M. Gries, M. Hieber, A. Hussein  T. and Kashiwabara : {\it The primitive equations in the scaling invariant space $L^\infty(L^1)$}, J. Evol. Equ. 21 (2021), 4145–4169.

\bibitem{Guillen}  F. Guillén-Gonz\'alez, N. Masmoudi, and M. A. Rodr\'iguez-Bellido : {\it Anisotropic estimates and strong solutions of the primitive equations, Differential Integral Equations}, 14, 1, (2001),1381–1408.

\bibitem{Haltiner} G. Haltiner and  R.Williams : {\it Numerical Weather Prediction and Dynamic Meteorology}, second edition, Wiley, New York, 1984.

\bibitem{Moi} V. Lemarié: {\it From anisotropic Navier-Stokes equations to primitive equations for the ocean and atmosphere}, Dyn. Partial Differ. Equ. {\bf 22} (2025), no.~2, 171--189.

\bibitem{Thèse} V. Lemarié: {\it Quelques limites singulières en mécanique des fluides}, Thèse.

\bibitem{Lewandowski} R. Lewandowski : {\it Analyse Mathématique et Océanographie}, Masson, Paris, 1997.

\bibitem{Titi1} J. Li and E.~S. Titi: {\it Recent advances concerning certain class of geophysical flows}, in Handbook of mathematical analysis in mechanics of viscous fluids, 933--971, Springer, Cham.

\bibitem{Titi4} J. Li, E.~S. Titi and G. Yuan: {\it The primitive equations approximation of the anisotropic horizontally viscous 3$D$ Navier-Stokes equations}, J. Differential Equations {\bf 306} (2022), 492--524.

\bibitem{Lions1} J.-L. Lions, R. Temam and S.H. Wang : {\it New formulations of the primitive equations of atmosphere and applications}, Nonlinearity 5 (1992), no. 2, 237-288.

\bibitem{Lions2} J.-L. Lions, R. Temam and S.H. Wang: {\it On the equations of the large-scale ocean. Nonlinearity} 5 (1992), no. 5, 1007-1053.

\bibitem{Lions3} J.-L. Lions, R. Temam and  S.H. Wang : {\it Mathematical theory for the coupled atmosphere-ocean models}. (CAO III). J. Math. Pures Appl. (9) 74 (1995), no. 2, 105-163.

\bibitem{Majda} A. Majda : {\it Introduction to PDEs and Waves for the Atmosphere and Ocean},  Courant Lecture Notes in Mathematics,9. New York University, Courant Institute of Mathematical Sciences, New York; American Mathematical Society, Providence, R.I., 2003.

\bibitem{Paicu1} M. Paicu: {\it \'Equation anisotrope de {N}avier-{S}tokes dans des espaces critiques}, Rev. Mat. Iberoamericana, 21, No. 1,p. 179-235,2005.

\bibitem{Paicu2} M. Paicu: {\it \'Equation periodique de Navier-Stokes sans viscosit\'e{} dans une
   direction}, Comm. Partial Differential Equations, 30, No 7-9, p 1107--1140, 2005.

\bibitem{Pedlosky} J. Pedlosky : {\it Geophysical fluid dynamics}, Second edition. Springer, New York, 1987.

\bibitem{Richardson} L.F. Richardson : {\it Weather Prediction by Numerical Process}, Cambridge Univ. Press, Cambridge 1922 (reprint, Dover, New York, 1988).

\bibitem{Smagorinsky} J. Smagorinsky : {\it General circulation experiments with the primitive equations}, 1. The basic experiment. Mon. Wea. Rev. 91 (1963), 98-164.

\bibitem{Vallis} G.K. Vallis : {\it Atmospheric and oceanic fluid dynamics}, Cambridge University Press, Cambridge, 2006.

\bibitem{Washington} W.M. Washington and C.L. Parkinson : {\it An introduction to three dimensional climate modeling}, Oxford University Press, Oxford, 1986.

\bibitem{Zeng} Q.C. Zeng : {\it  Mathematical and Physical Foundations of Numerical Weather Prediction}, Science Press, Beijing, 1979.
\end{thebibliography}
\end{document}